\newtheorem{thm}{Theorem}[section]
\newtheorem{cor}[thm]{Corollary}
\newtheorem{lem}[thm]{Lemma}
\theoremstyle{definition}
\newtheorem{defi}[thm]{Definition}
\theoremstyle{remark}
\newtheorem{rmk}[thm]{\bf Remark}
\numberwithin{equation}{section}
\numberwithin{figure}{section}
\def\A{\mathcal{A}}
\def \e{\epsilon}
\def \F{\mathcal{F}}
\def \H{\mathcal{H}}
\def\la{\lambda}
\def \omg{\omega}
\def \T{\mathcal{T}}
\def \Tr{\text{Tr}}
\def\Z{\mathbb{Z}}
\begin{document}
\title[spectral characterization of graph]
{High-ordered spectral characterization of unicyclic graphs}

\author[Y.-Z. Fan]{Yi-Zheng Fan*}
\address{Center for Pure Mathematics, School of Mathematical Sciences, Anhui University, Hefei 230601, P. R. China}
\email{fanyz@ahu.edu.cn}
\thanks{*The corresponding author.
This work was supported by National Natural Science Foundation of China (Grant No. 11871073).}

%\author[Y. Yang]{Ya Yang}
%\address{School of Mathematical Sciences, Anhui University, Hefei 230601, P. R. China}
%\email{yangy@stu.ahu.edu.cn}
%
%\author[C.-M. She]{Chuan-Ming She}
%\address{School of Mathematical Sciences, Anhui University, Hefei 230601, P. R. China}
%\email{cm-she@stu.ahu.edu.cn}

\author[H.-X. Yang]{Hong-Xia Yang}
\address{School of Mathematical Sciences, Anhui University, Hefei 230601, P. R. China}
\email{yanghx@stu.ahu.edu.cn}

\author[J. Zheng]{Jian Zheng}
\address{School of Mathematical Sciences, Anhui University, Hefei 230601, P. R. China}
\email{zhengj@stu.ahu.edu.cn}
%
%
%\author[Y.-M. Song]{Yi-Min Song}
%\address{School of Mathematical Sciences, Anhui University, Hefei 230601, P. R. China}
%\email{songym@stu.ahu.edu.cn}
%
%
%
\subjclass[2000]{Primary 05C65, 15A69, ; Secondary 13P15, 14M99}

\keywords{Unicyclic graph; graph isomorphism; cospectral graphs; Hypergraph; adjacency tensor; trace}

\begin{abstract}
In this paper we will apply the tensor and its traces to investigate the spectral characterization of unicyclic graphs.
Let $G$ be a graph and $G^m$ be the $m$-th power (hypergraph) of $G$.
The spectrum of $G$ is referring to its adjacency matrix, and the spectrum of $G^m$ is referring to its adjacency tensor.
The graph $G$ is called determined by high-ordered spectra (DHS for short) if, whenever $H$ is a graph such that $H^m$ is cospectral with $G^m$ for all $m$, then $H$ is isomorphic to $G$.
In this paper we first give formulas for the traces of the power of  unicyclic graphs, and then provide some
high-ordered cospectral invariants of unicyclic graphs.
We prove that a class of unicyclic graphs with cospectral mates is DHS,
and give two examples of infinitely many pairs of cospectral unicyclic graphs but with different high-ordered spectra.
%Our work implies that high-ordered spectra of graphs contain more structural information than the usual spectra.
\end{abstract}

\maketitle
\section{Introduction}
The graph isomorphism problem is one of few standard problems in computational complexity theory belonging to NP.
In 1956, G\"unthard and Primas \cite{GunP} raised the question of determining the graphs by the spectrum.
In 1957, Collatz and Sinogowitz \cite{ColS} presented a pair of non-isomorphic cospectral trees.
In 1973, Schwenk \cite{Sch} proved that almost every tree has a cospectral mate by constructing a non-isomorphic cospectral tree for each tree of sufficiently large order.
In 1982, Godsil and McKay \cite{GodM} invented a powerful method called GM-switching, which can produce lots of pairs of cospectral graphs.
A graph $G$ is said to be \emph{determined by the spectrum} (DS for short) if,
whenever $H$ is a graph cospectral with $G$, then $H$ must be isomorphic to $G$.
All the known DS graphs have very special structures, and the techniques involved in proving them to be DS cannot be applied to general graphs; see \cite{DamH1, DamH2}.

Since a graph cannot be determined by its spectrum in general, we need more information to recognize a graph.
Note that two graphs are isomorphic if and only if their complements are isomorphic.
Wang and Xu \cite{WangX1, WangX2} and Wang \cite{Wang} applied the spectra of a graph and also its complement to investigate whether the graph is determined by its generalized spectrum.
A similar idea appears in a recent work by Chen, Sun and Bu \cite{CSBu}, who applied the spectra of the powers of a graph to characterize whether the  graph is determined by high-ordered spectra.

Let $G=(V(G),E(G))$ be a graph with vertex set $V=V(G)$ and edge set $E=E(G)$.
%Hu, Qi and Shao \cite{HQS} introduced the power of a graph as follows.
For an integer $m \geq 2$, the {\it $m$-th power of $G$}, denoted by $G^{m}:=(V^{m},E^{m})$,
  is defined to be the $m$-uniform hypergraph with  vertex set $V^{m}=V\cup{\{i_{e,1},\ldots,i_{e,m-2}: e\in E}\}$ and edge set
$E^{m}={\{e\cup{{\{i_{e,1},\ldots,i_{e,m-2}}}\}: e\in E}\}$, where $i_{e,1},\ldots,i_{e,m-2}$ are new vertices inserted to  each edge $e \in E$.
Note if $m=2$, $G^m=G$ and this case is trivial.
Observe that two graphs are isomorphic if and only if their $m$-th powers are isomorphic for each integer $m \ge 2$.
Chen, Sun and Bu \cite{CSBu} introduced the following notions on the spectrum of power hypergraphs.
They called the spectrum of $G^m$ the \emph{$m$-ordered spectrum} of $G$, and two graphs \emph{$m$-ordered cospectral} if they have the same $m$-ordered spectra.
A graph $G$ is called \emph{determined by high-ordered spectra} (DHS for short) if, whenever $H$ is a graph that are $m$-ordered cospectral with $G$ for all $m \ge 2$, then $H$ must be isomorphic to $G$.

Surely, a graph that is DS must be DHS, but the converse does not hold.
For example, van Dam and Haemers \cite{DamH2} showed that not all Smith's graphs are DS.
However, all Smith's graphs are DHS, proved by Chen, Sun and Bu \cite{CSBu}.
%Another example is Schwenk's construction of cospectral mate.
They also showed that every tree and its cospectral mate in Schwenk's construction have different high-ordered spectra \cite{CSBu}.

The traces or spectral moments play an important role in DS problems.
Let $G$ be a graph and let $\A(G)$ be the adjacency matrix of $G$.
The $d$-th \emph{trace} (or $d$-th \emph{spectral moment}) of $G$, denoted by $\Tr_d(G)$, is defined to be the trace of $\A(G)^d$, which is the sum of the $d$-th powers of all eigenvalues of $\A(G)$, and is also equal to the number of closed walks of length $d$ in $G$.
It is known that two graphs $G$ and $H$ are cospectral if and only if $\Tr_d(G)=\Tr_d(H)$ for all $d$ (or for $d=1,2,\ldots,|V(G)|$).
%We will apply the above trace equalities on high-ordered cospectral pairs of graphs.
We should note that here the spectrum of a uniform hypergraph is defined as the spectrum of the adjacency tensor of the hypergraph.
By the traces generalized from matrices to tensors due to Morozov and Shakirov \cite{MS2011}, we still have the above equalities for two cospectral uniform hypergraphs.
A key problem is how to interpreter the structural information from the traces of hypergraphs.

%In this paper we first give formulas for the traces of the power of  unicyclic graphs, and then provide some
%high-ordered cospectral invariants of unicyclic graphs.
%We prove that a class of unicyclic graphs with copectral mates is DHS,
%and give two examples of infinitely many pairs of cospectral unicyclic graphs but with different high-ordered spectra.
%Our work implies that high-ordered spectra of graphs contain more structural information than the usual spectra.

In this paper, we will extend the work of Chen, Sun and Bu \cite{CSBu} from trees to unicyclic graphs, and characterize the unicylic graph that are DHS.
The paper is organized as follows.
In Section 2 we introduce some preliminary knowledge about the spectra and traces of hypergraphs.
In Section 3 we give formulas for the traces of the power of unicyclic graphs by means of the sub-structure of the graph.
In the last section, we provide some high-ordered cospectral invariants for general graphs especially for unicyclic graphs, and prove that a class of unicyclic graphs with copectral mates is DHS. We give two examples of infinitely many pairs of cospectral unicyclic graphs but with different high-ordered spectra.
Our work implies that high-ordered spectra of graphs can recognize more structural information than the usual spectra.

\section{Preliminaries}
\subsection{Tensors and hypergraphs}
Let $\T=(t_{i_{1} i_2 \ldots i_{m}})$ be a complex tensor of order $m$ and dimension $n$.
%The tensor $\T$ is \emph{symmetric} if all entries $t_{i_1i_2\cdots i_m}$ are invariant under any permutation of its indices.
Given a vector $x\in \mathbb{C}^{n}$, $\T x^{m-1} \in \mathbb{C}^n$, which is defined as follows:
   \[
      (\T x^{m-1})_i =\sum_{i_{2},\ldots,i_{m}\in [n]}t_{ii_{2}\ldots i_{m}}x_{i_{2}}\cdots x_{i_m}, i \in [n].
  \]
 Let $\mathcal{I}=(i_{i_1i_2\ldots i_m})$ be the {\it identity tensor} of order $m$ and dimension $n$, that is, $i_{i_{1}i_2 \ldots i_{m}}=1$ if
   $i_{1}=i_2=\cdots=i_{m} \in [n]$ and $i_{i_{1}i_2 \ldots i_{m}}=0$ otherwise.
In 2005 Lim \cite{Lim} and Qi \cite{Qi} introduced the eigenvalues of tensors independently as follows.

\begin{defi}[\cite{Lim,Qi}]\label{eigen} Let $\T$ be an $m$-th order $n$-dimensional tensor.
For some $\lambda \in \mathbb{C}$, if the polynomial system $(\lambda \mathcal{I}-\T)x^{m-1}=0$, or equivalently $\T x^{m-1}=\lambda x^{[m-1]}$, has a solution $x\in \mathbb{C}^{n}\backslash \{0\}$,
then $\lambda $ is called an \emph{eigenvalue} of $\T$ and $x$ is an \emph{eigenvector} of $\T$ associated with $\lambda$,
where $x^{[m-1]}:=(x_1^{m-1}, x_2^{m-1},\ldots,x_n^{m-1})$.
\end{defi}

%The \emph{determinant} $\det \T$ of $\T$ is defined to be the resultant of the polynomials $\T x^{m-1}$ \cite{Ha},
%and the \emph{characteristic polynomial} of $\T$ is defined to be $\varphi_\T(\la):=\det(\la \I-\T)$ \cite{Qi,CPZ2}.
%It is known that $\la$ is an eigenvalue of $\T$ if and only if it is a root of $\varphi_\T(\la)$.
%The \emph{spectrum} of $\T$ is the multi-set of the roots of $\varphi_\T(\la)$.

A  \emph{hypergraph} $\H=(V,E)$ consists of a vertex set $V=\{v_1,v_2,{\cdots},v_n\}$ denoted by $V(\H)$ and an edge set $E=\{e_1,e_2,{\cdots},e_k\}$ denoted by $E(\H)$,
 where $e_i \subseteq V$ for $i \in [k]$.
 If $|e_i|=m$ for each $i \in [k]$ and $m \geq2$, then $\H$ is called an \emph{$m$-uniform} hypergraph.
 The \emph{degree} $d_v(\H)$ of a vertex $v$ in $\H$ is the number of edges of $\H$ containing the vertex $v$.
A vertex $v$ of $\H$ is called a \emph{cored vertex} if it has degree one.
%An edge $e$ of $\H$ is called a \emph{pendent edge} if it contains $|e|-1$ cored vertices; and the cored vertices in the pendent edge are also called \emph{pendent vertices}.
A {\it walk} $W$ in $\H$ is a sequence of alternate vertices and edges: $v_{0}e_{1}v_{1}e_{2}\cdots e_{l}v_{l}$,
    where $v_{i} \ne v_{i+1}$ and $\{v_{i},v_{i+1}\}\subseteq e_{i}$ for $i=0,1,\ldots,l-1$.
%In the case of $\H$ being a simple graph, we simply write $W: v_0 v_1 \cdots v_l$ as each edge contains exactly two vertices.
%The walk $W$ is called a {\it path} if no vertices or edges are repeated in the sequence.
If $v_0=v_l$, then $W$ is called a {\it circuit}, and is called a {\it cycle} if no vertices or edges are repeated except $v_0=v_l$.
The  hypergraph $\H$ is said to be {\it connected} if every two vertices are connected by a walk.
  The hypergraph $\H$ is called \emph{simple} if there exists no $i \ne j$ such that $e_i \subseteq e_j$, and is called \emph{nontrivial} if it contains more than one vertex.
%In particular, a simple graph is a simple $2$-uniform hypergraph.
Throughout of this paper, \emph{all hypergraphs are considered nontrivial, connected, simple and $m$-uniform unless stated somewhere.}

%Hu, Qi and Shao \cite{HQS} introduced a class of hypergraphs which are constructed from simple graphs.
%Let $G=(V,E)$ be a simple graph. For any $m\geq3$, the {\it $m$-th power of $G$}, denoted by $G^{m}:=(V^{m},E^{m})$,
%  is defined as the $m$-uniform hypergraph with the vertex set $V^{m}=V\cup{\{i_{e,1},\ldots,i_{e,m-2}: e\in E}\}$ and edge set
%$E^{m}={\{e\cup{{\{i_{e,1},\ldots,i_{e,m-2}}}\}: e\in E}\}$, where $i_{e,1},\ldots,i_{e,m-2}$ are new vertices inserted to $G$ for each $e \in E$.
% If a hypergraph is connected and acyclic, then it is called a {\it hypertree} (also called supertree in \cite{LSQ} and other literatures).
% Obviously, the power of trees is a special class of hypertrees.
% Denote $P_n$ and $S_n$ the path and cycle with $n$ edges respectively, both as simple graphs.
% The powers $P_n^m$ and $S_n^m$ are called \emph{hyperpath} and \emph{hyperstar} respectively.
% The power of graphs was generalized by Khan and Fan \cite{KFan} and Kang et al. \cite{KLQY}.

In 2012 Cooper and Dutle \cite{CD2012} introduced the adjacency tensor of a uniform hypergraph, and applied the eigenvalues of the tensor to characterize the structural property of the hypergraph.

\begin{defi}[\cite{CD2012}]\label{def-adj}
Let $\H$ be an $m$-uniform hypergraph on $n$ vertices $v_1,v_2,\ldots,v_n$.
The \emph{adjacency tensor} of $\H$ is defined as $\mathcal{A}(\H)=(a_{i_{1}i_{2}\ldots i_{m}})$, an $m$-th order $n$-dimensional tensor, where
$$a_{i_1 i_2 \ldots i_m}=\left\{
\begin{array}{cl}
\frac{1}{(m-1)!}, & \mbox{~if~} \{v_{i_1},\ldots,v_{i_m}\} \in E(H);\\
0, & \mbox{~else}.
\end{array}\right.
$$
\end{defi}
The \emph{spectrum} and \emph{eigenvalues} of $\H$ are referring to those of $\A(\H)$.
If $m=2$, then $\A(\H)$ is exactly the adjacency matrix of the graph $\H$.
%Note that the adjacency tensor $\A(\H)$ is symmetric.
%The \emph{spectrum} and the \emph{traces} of $\H$ are referring to those of $\A(\H)$.
Since the Perron-Frobenius theorem of nonnegative matrices was generalized to  nonnegative tensors \cite{CPZ1,FGH,YY1,YY2,YY3},
 the spectral hypergraph theory develops rapidly on many topics, such as the spectral radius \cite{BL,FanTPL, GCH2022,KLM2014,LSQ, LKS2018,LM}, the eigenvariety \cite{FBH,FTL,FHBproc}, the spectral symmetry \cite{FHB,FLW, SQH2015,Zhou}, the eigenvalues of hypertrees \cite{ZKSB}.

Let $G^m$ be the $m$-th power of a graph $G$.
Zhou et al. \cite{Zhou} showed that all eigenvalues of $G^m$ can be obtained from the eigenvalues of $G$.
Cardoso et al. \cite{CardHT} gave a more detailed statement as follows.

\begin{lem} [\cite{CardHT}]\label{Card}
Let $G^m$ be the $m$-th power of a graph $G$.

\begin{itemize}
\item[(1)] If $m=3$, $\la$ is an eigenvalue of $G^3$ if and only if there is an induced subgraph of $G$ with eigenvalue $\beta$ such that $\beta^2=\la^3$.

\item[(2)] If $m \ge 4$, $\la$ is an eigenvalue of $G^m$ if and only if there is a subgraph of $G$ with eigenvalue $\beta$ such that $\beta^2=\la^m$.
\end{itemize}

\end{lem}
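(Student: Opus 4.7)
My plan is to unfold the polynomial eigen-equation $\A(G^m) x^{m-1} = \la x^{[m-1]}$ by treating separately its component at each original vertex of $G$ and at each cored vertex $i_{e,k}$. For an edge $e=uv \in E(G)$, write $y_{e,k} := x_{i_{e,k}}$. The system splits into
\begin{align*}
\text{(I)} & \quad x_u x_v \prod_{j\ne k} y_{e,j} = \la\, y_{e,k}^{m-1}, \qquad 1\le k \le m-2, \\
\text{(II)} & \quad \sum_{u \sim v} x_u \prod_{k=1}^{m-2} y_{uv,k} = \la\, x_v^{m-1}.
\end{align*}
The lemma will follow by solving (I) for the cored variables and substituting into (II).

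Dividing the equation in (I) for one index $k$ by that for another shows that, for each edge $e$ and each $\la\ne 0$, either all $y_{e,k}$ vanish or none do; and in the latter case any two $y_{e,k}$ differ by an $m$-th root of unity. Multiplying all $m-2$ equations in (I) together yields the clean identity
\[
P_e^2 \;=\; \left(\frac{x_u x_v}{\la}\right)^{m-2}, \qquad P_e := \prod_{k=1}^{m-2} y_{e,k},
\]
so $P_e$ is determined up to sign by $x_u$, $x_v$, $\la$, and $P_e=0$ exactly on the ``inactive'' edges. After absorbing the remaining sign ambiguity by a gauge transformation $x_v \mapsto \zeta_v x_v$ with suitable roots of unity on each connected component, and setting $w_v := x_v^{m/2}$ and $\mu := \la^{m/2}$, equation (II) restricted to the active edges becomes
\[
\sum_{\substack{u \sim v \\ uv \text{ active}}} w_u \;=\; \mu\, w_v.
\]
This is exactly the adjacency eigenvalue equation of the subgraph $H \subseteq G$ whose edge set consists of the active edges, so $\beta := \mu$ is an eigenvalue of $H$ with $\beta^2 = \la^m$.

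To separate the two cases of the lemma I would next analyze what being inactive forces. For $m=3$ we have $m-2=1$, and the single cored-vertex equation with $y_e=0$ reads $x_u x_v = 0$; thus an inactive edge must be incident to a vanishing original vertex, and $H$ is precisely the subgraph of $G$ induced by the support of $x|_{V(G)}$, matching~(1). For $m\ge 4$, setting $y_e=0$ satisfies (I) automatically (since $m-3\ge 1$ makes every factor vanish) with no constraint on $x_u$ or $x_v$, so any subset of edges can be deactivated and $H$ is merely a (not necessarily induced) subgraph, matching~(2). The converse direction reverses the construction: given an eigenvector $w$ of an (induced) subgraph $H$ at eigenvalue $\beta$ and any $\la$ with $\la^m=\beta^2$, I would set $x_v := w_v^{2/m}$ on $V(H)$ and $0$ elsewhere, choose $y_{e,k}$ on each edge $e\in E(H)$ to satisfy $y_{e,k}^2 = x_u x_v/\la$ (and zero on the remaining cored vertices), and verify (I) and (II) by a direct calculation.

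I expect the main obstacle to be the bookkeeping of complex roots, especially in the odd case $m=3$, where the square-root signs on edges must be made compatible with the fractional-power choices at vertices. This requires either a careful invariance argument or an explicit gauge-fixing on each connected component of the active subgraph to check that the signed adjacency operator obtained on $H$ is switching-equivalent to a plain $0/1$ adjacency matrix; the freedom to choose either sign of $\beta$ in $\beta^2=\la^m$ provides exactly the slack needed. A separate short argument finally handles $\la=0$: in that case (I) forces some $x_v=0$ or some $P_e=0$, and the resulting degenerate system yields $\beta=0$ as an eigenvalue of a suitable (induced) subgraph.
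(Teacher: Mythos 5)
The paper does not actually prove this lemma---it is imported verbatim from Cardoso, Hoppen and Trevisan \cite{CardHT}---so there is no in-paper argument to compare against; I am judging your sketch on its own. Your setup is the right one and matches how the known proofs begin: splitting the eigen-equation into the cored-vertex equations (I) and the original-vertex equations (II), deriving $P_e^2=(x_ux_v/\la)^{m-2}$, and reducing (II) to a weighted adjacency relation on the active subgraph. The $m=3$ versus $m\ge 4$ dichotomy (an inactive edge forces $x_ux_v=0$ exactly when $m-2=1$) is also correctly identified.

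The fatal gap is the sentence claiming that the signed adjacency operator on the active subgraph $H$ is switching-equivalent to the plain adjacency matrix, with the sign of $\beta$ supplying ``exactly the slack needed.'' It is not. Your own analysis shows that on each active edge $P_e$ may independently be \emph{either} square root of $(x_ux_v/\la)^{m-2}$ (realized by different choices of the roots of unity $y_{e,k}/y_{e,1}$), so the resulting edge signs $s_{uv}$ are an arbitrary signing of $H$, and an arbitrary signing is switching-equivalent to $\pm A(H)$ only when every cycle of $H$ is balanced. A single global sign on $\beta$ cannot repair a per-cycle imbalance. Concretely, take $m\ge 4$ and $G\supseteq C_4$: the $4$-cycle with an odd number of negative edges has eigenvalue $\mu=\sqrt2$ with a nowhere-zero eigenvector, and reversing your construction lifts this to a genuine eigenvector of $G^m$ with $\la^m=\mu^2=2$. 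But $2\ne\beta^2$ for any $\beta\in\{\pm2,0,0\}=\Spec(C_4)$, so your argument terminates at a signed graph whose eigenvalue is not (up to sign) an eigenvalue of $H$ itself. The lemma survives only because $\sqrt2\in\Spec(P_3)$ and $P_3\subseteq C_4\subseteq G$---that is, the forward direction must pass to a \emph{different} subgraph than the active one, and your proposal has no mechanism for doing so. Closing this hole (showing that every eigenvalue of every signing of every subgraph of $G$ has its square among $\{\beta^2:\beta\in\Spec(H'),\,H'\subseteq G\}$) is where the real content of \cite{CardHT} lies. The converse construction and the $\la=0$ case are essentially fine modulo the branch bookkeeping you already flag.
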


From Lemma \ref{Card}, we find that $G^m$ contains more spectral information than $G$ as its eigenvalues are closely related to all (induced) subgraphs of $G$ (including $G$ itself).
So, it is natural that a graph has more probability to be DHS than to be DS.

\subsection{Traces}
We will introduce some knowledge about the traces of hypergraphs.
Let $\H$ be an $m$-uniform hypergraph on $n$ vertices.
The \emph{$d$-th trace of $\H$}, denoted by $\Tr_d(\H)$, is referring to the $d$-th trace of $\A(\H)$.
Morozov and Shakirov \cite{MS2011} introduced the traces of polynomial maps $f$ given by homogeneous polynomials of arbitrary degrees.
As a tensor $\T=(t_{i_1i_2 \ldots i_m})$ of order $m$ and dimension $n$ naturally induces polynomial maps,
the $d$-th trace $\Tr_d(\T)$ of $\T$  is expressed as follow:
\begin{equation}\label{MSeq}\Tr_d(\T)=(m-1)^{n-1} \sum_{d_1+\cdots+d_n=d, \atop d_i \in \mathbb{N}, i \in [n]}
\prod_{i=1}^n \frac{1}{(d_i(m-1))!} \left(\sum_{y_i \in [n]^{m-1}}t_{iy_i}\frac{\partial }{\partial a_{iy_i}}\right)^{d_i}
\Tr(A^{d(m-1)}),
\end{equation}
where $t_{iy_i}=t_{ii_2 \ldots i_m}$ and $\frac{\partial }{\partial a_{iy_i}}=\frac{\partial }{\partial a_{ii_2}} \cdots \frac{\partial }{\partial a_{ii_m}}$ if
$y_i=(i_2, \ldots, i_m)$.

Cooper and Dulte \cite{CD2012} gave an expression for the co-degree coefficients of the characteristic polynomial of $\A(\H)$ of $\H$ in terms of traces of $\H$.
Shao, Qi and Hu \cite{SQH2015} gave a graph interpretation for the $d$-th trace of a general tensor $\T$ of order $m$ and dimension $n$, and proved that
$$\Tr_d(\T)=\sum_{i=1}^N \la_i^d,$$
where $\la_1,\ldots,\la_N$ are all eigenvalues of $\T$, and $N=n(m-1)^{n-1}$.
So two tensors $\T_1$ and $\T_2$ both of order $m$ and dimension $n$ are cospectral if and only if
$$ \Tr_d(\T_1)=\Tr_d(\T_2)$$
for all $d$ or $d=1,2,\ldots,n(m-1)^{n-1}$.
Clark and Cooper \cite{CC2021} expressed the trace as a weighted sum over a family of Veblen hypergraphs.
Chen, Bu and Zhou \cite{CBZ2022} gave a formula for the spectral moments (equivalently, the traces) of a hypertree in terms of the number of sub-hypertrees.

Given an ordering of the vertices of $\H$, let
$$\F_d(\H):=\{(e_1(v_1),\ldots,e_d(v_d)): e_i \in E(\H), v_1 \le \cdots \le v_d\},$$
be the set of $d$-tuples of ordered rooted edges, where $e_i(v_i)$ is an edge $e_i$ with root $v_i \in e_i$ for $i \in [d]$.
Define a rooted directed star $S_{e_i}(v_i)=(e_i, \{(v_i,u): u \in e_i\backslash \{v_i\}\})$ for each $i \in [d]$, and multi-directed graph
$R(F)=\bigcup_{i=1}^d S_{e_i}(v_i)$ associated with $F \in \F_d(\H)$.
Let
$$ \F^\e_d(\H):=\{F \in \F_d(\H): R(F) ~is ~Eulerian\}.$$
For an $F \in \F_d^\e(\H)$, denote $V(F):=V(R(F))$, $r_v(F)$ the number of edges in $F$ with $v$ as the root, and $d_v^+(F)=(m-1)r_v(F)$ (namely, the outdegree of $v$ in $R(F)$).
Denote by $\tau(F):=\tau_u(R(F))$ the number of arborescences of $R(F)$ with root $u$ (namely, a directed
$u$-rooted spanning tree such that all vertices except $u$ has a directed path from itself to $u$), which is equal to the principal minor the Laplacian matrix $L(R(F))$ of $R(F)$ by deleting the row and column indexed by $u$ (\cite{TS1941, EB1987}).
As $R(F)$ is Eulerian,
$ \tau_u(R(F))$ is independent of the choice of the root $u$ so that the root $u$ is omitted.
Fan et al. \cite{Fan2022} give an expression of the $d$-th trace of $\H$ as follows.

\begin{lem}[\cite{Fan2022}] \label{TrF} For an $m$-uniform hypergraph $\H$ on $n$ vertices,
\begin{equation}\label{Shaoeq_Hyp3}\Tr_d(\H)=d(m-1)^{n}\sum_{F \in \F_d^\e(\H)} \frac{\tau(F)}{\prod_{v \in V(F)} d_v^+(F)}.
\end{equation}
\end{lem}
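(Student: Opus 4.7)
I would start from the Morozov--Shakirov formula \eqref{MSeq} applied to $\T = \A(\H)$. Since the only nonzero entries $t_{iy_i}$ of $\A(\H)$ correspond to edges of $\H$ with $i$ as a distinguished root, expanding each vertex-operator $\bigl(\sum_{y_i} t_{iy_i}\,\partial/\partial a_{iy_i}\bigr)^{d_i}$ multinomially produces a sum over ordered $d_i$-tuples of $\H$-edges rooted at $i$. Summed over all $i$ and all compositions $d_1 + \cdots + d_n = d$, and after collapsing by permutations within each constant-root block, these data correspond exactly to the rooted-edge tuples $F = (e_1(v_1), \dots, e_d(v_d)) \in \F_d(\H)$, together with the associated multi-digraph $R(F) = \bigcup_{i} S_{e_i}(v_i)$.

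The second step is to read off the contribution of the derivative to $\Tr(A^{d(m-1)})$. Viewing $\Tr(A^L)$ as the generating polynomial over closed walks $W$ of length $L$ in the complete loopless digraph on $[n]$, differentiating in every arc of $R(F)$ (with multiplicities accounting for parallel arcs) extracts the number of closed walks of length $d(m-1)$ whose arc multiset coincides with that of $R(F)$. Such walks are exactly Eulerian closed walks in $R(F)$, so only $F \in \F_d^\e(\H)$ can contribute. For a connected Eulerian multi-digraph $D$ with $L$ arcs the BEST theorem gives $L\,\tau(D)\prod_v (d_v^+(D)-1)!$ as the count of Eulerian closed walks with distinguished starting vertex, yielding $d(m-1)\,\tau(F)\prod_v (d_v^+(F)-1)!$ for $D = R(F)$.

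Finally I would reassemble. Using $d_v^+(F) = (m-1)r_v(F) = (m-1)d_i$ at $v = v_i$, the product $\prod_i (d_i(m-1))!$ in the denominator of \eqref{MSeq} rewrites as $\prod_v (d_v^+(F))!$, and cancellation against $\prod_v (d_v^+(F)-1)!$ leaves $1/\prod_v d_v^+(F)$, matching the target expression. The main obstacle will be precisely this bookkeeping: ensuring that the permutational symmetry factors from the multinomial expansion in Step 1, the factorials produced by repeated differentiation of parallel arcs in $R(F)$, and the BEST factorials all combine so that only the stated $d(m-1)^n$ prefactor survives, with the extra factor of $m-1$ beyond the $(m-1)^{n-1}$ of \eqref{MSeq} emerging out of either the multinomial or the parallel-arc reconciliation.
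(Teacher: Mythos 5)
The paper does not actually prove this lemma --- it is imported verbatim from \cite{Fan2022} --- but your route through the Morozov--Shakirov formula \eqref{MSeq} and the BEST theorem is precisely the derivation underlying that source, and the bookkeeping closes exactly as you anticipate: the $1/(m-1)!$ in each nonzero entry of $\A(\H)$ cancels the $(m-1)!$ orderings of each $y_i$, the multinomial expansion of $\bigl(\sum_{y_i}t_{iy_i}\partial/\partial a_{iy_i}\bigr)^{d_i}$ over all $i$ is in bijection with the tuples $F\in\F_d(\H)$ having root-degree sequence $(d_1,\dots,d_n)$, the factorials from differentiating parallel arcs cancel against lifting vertex-walks to arc-walks of $R(F)$, the identity $(d_i(m-1))!=d_{v_i}^+(F)!$ turns the Morozov--Shakirov denominator into $\prod_{v\in V(F)}d_v^+(F)!$, and the walk length $L=d(m-1)$ in the BEST count $L\,\tau(F)\prod_v(d_v^+(F)-1)!$ supplies both the factor $d$ and the extra power of $m-1$ needed to promote $(m-1)^{n-1}$ to $(m-1)^n$. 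Your proposal is correct and takes essentially the same approach as the cited proof.
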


%=:\sum_{F \in \F_d^\e(\H)} \omega(F)

%By result in \cite{Fan2022}, we have
%\begin{equation}\label{Trace}\Tr_d(\H)=d(m-1)^{n} \sum_{F \in \F_d^\e(\H)} \frac{\tau(D_F)}{\prod_{v \in V(D_F)} d_{D_F}^+(v)}.
%\end{equation}

For each $F \in \F_d^\e(\H)$, we get a multi-hypergraph induced by the edges in $F$ by omitting the roots, denoted by $\mathcal{V}_F$, is an $m$-uniform and $m$-valent multi-hypergraph called \emph{Veblen hypergraph}.
On the other side,
given a Veblen hypergraph $H$, a \emph{rooting} of $H$ is an ordering $F=(e_1(v_1),\ldots,e_t(v_t))$ of all edges of $H$, where $v_i$ is the root of $e_i$ for $i \in [t]$, and $v_1 \le \cdots \le v_t$ under the given order of the vertices of $H$.
If $R(F)$ is Eulerian, then $F$ is called an \emph{Euler rooting} of $H$;
in this case, $H$ is called \emph{Euler rooted} with each edge rooted as in $F$ by omitting the order.
Denote by $\mathcal{R}(H)$ the set of Euler rooting of $H$.

Denote by $\mathcal{V}_d(\H)$ the set of Veblen hypergraphs with $d$ edges associated with $\H$ as follows:
$$ \mathcal{V}_d(\H)=\cup_{G \in \mathcal{C}(\H)}\{\mathcal{V}_F: F \in  \F_d^\e(\H), \underline{\mathcal{V}_F}=G\},$$
where $\mathcal{C}(\H)$ denotes the set of representatives of the isomorphic classes of connected sub-hypergraphs of $\H$, and
$\underline{H}$ the underlying hypergraph of a multi-hypergraph $H$ obtained by  removing duplicate edges of $H$.
For each $H \in \mathcal{V}_d(\H)$, denote
$$
C_H=\sum_{F \in \mathcal{R}(H)}\frac{\tau(F)}{\prod_{v \in V(F)} d_v^+(F)},$$
and $N_{\H}(\underline{H})$ the number of sub-hypergraphs of $\H$ that is isomorphic to $\underline{H}$.
By Lemma \ref{TrF}, we have
\[
\Tr_d(\H)=d(m-1)^{n} \sum_{H \in \mathcal{V}_d(\H)} \left(\sum_{F \in \mathcal{R}(H)}\frac{\tau(F)}{\prod_{v \in V(F)} d_v^+(F)}\right) N_{\H}(\underline{H}).
\]
So we get another expression of $\Tr_d(\H)$.

\begin{cor}\label{TrV} For an $m$-uniform hypergraph $\H$ on $n$ vertices,
\begin{equation}\label{Shaoeq_Hyp3}\Tr_d(\H)=d(m-1)^{n}\sum_{H \in \mathcal{V}_d(\H)} C_H N_{\H}(\underline{H}).
\end{equation}
\end{cor}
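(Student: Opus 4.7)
The plan is to prove the corollary as a direct regrouping of the sum in Lemma \ref{TrF}; no new analytic input is required beyond combinatorial bookkeeping, and in fact the displayed intermediate equation in the text already records most of this regrouping.

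First, I would partition $\F_d^\e(\H)$ according to the Veblen multi-hypergraph $\mathcal{V}_F$ that each rooted edge tuple induces on vertices of $\H$. The map $F \mapsto \mathcal{V}_F$ forgets only the root assignments and the edge ordering, so the preimage of a given labeled Veblen hypergraph $H$ sitting on vertices of $\H$ is exactly its set of Euler rootings $\mathcal{R}(H)$. Moreover, $R(F)$, and hence both $\tau(F)$ and the outdegrees $d_v^+(F)$, are determined by the underlying Veblen data together with the root choices specified by $F$. Therefore
\[
\sum_{F \in \F_d^\e(\H)} \frac{\tau(F)}{\prod_{v \in V(F)} d_v^+(F)} \;=\; \sum_{H} \sum_{F \in \mathcal{R}(H)} \frac{\tau(F)}{\prod_{v \in V(F)} d_v^+(F)} \;=\; \sum_{H} C_H,
\]
where the outer sum on the right ranges over all labeled Veblen hypergraphs in $\H$ arising as some $\mathcal{V}_F$.

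Second, I would aggregate these labeled contributions by the isomorphism type of the underlying hypergraph. Since $R(F)$ is Eulerian and hence connected, $\underline{\mathcal{V}_F}$ is a connected sub-hypergraph of $\H$, so its isomorphism class lies in $\mathcal{C}(\H)$. The coefficient $C_H$ depends only on the isomorphism class of $H$, because $\mathcal{R}(H)$, $\tau$, and the outdegrees $d_v^+$ are combinatorial invariants of the labeled multi-hypergraph. Under the convention implicit in the definition of $\mathcal{V}_d(\H)$, each isomorphism class $H \in \mathcal{V}_d(\H)$ with $\underline{H} = G$ corresponds in $\H$ to the $N_\H(G)$ sub-hypergraphs of $\H$ isomorphic to $G$, each carrying exactly one labeled representative of $H$. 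Collecting terms gives
\[
\sum_{H} C_H \;=\; \sum_{H \in \mathcal{V}_d(\H)} C_H \, N_\H(\underline{H}),
\]
and multiplying by $d(m-1)^{n}$ recovers the stated formula.

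The only delicate point is matching conventions so that each $F \in \F_d^\e(\H)$ is counted exactly once on the right-hand side. Concretely, one must verify that $\tau$ and $d_v^+$ are preserved under isomorphism of Veblen hypergraphs (so that $C_H$ is a well-defined isomorphism invariant) and that the enumeration of labeled Veblen hypergraphs of a given isomorphism class in $\H$ factors cleanly as a choice of an isomorphic copy of $\underline{H}$ in $\H$ together with a canonical Veblen representative on that copy. With these routine checks in place, the derivation is purely formal and the corollary follows immediately.
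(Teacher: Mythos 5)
Your regrouping of Lemma \ref{TrF} --- partitioning $\F_d^\e(\H)$ by the induced Veblen hypergraph $\mathcal{V}_F$, summing the fibers $\mathcal{R}(H)$ to get $C_H$, and then collecting the copies of each underlying sub-hypergraph via the factor $N_{\H}(\underline{H})$ --- is exactly the paper's derivation, which records the same intermediate identity in the display immediately preceding the corollary. The one convention to note is that the paper's $\mathcal{V}_d(\H)$ consists of weightings of fixed representatives $G \in \mathcal{C}(\H)$ rather than of isomorphism classes of Veblen hypergraphs (the automorphism bookkeeping you flag as the delicate point is deferred to Eq. (\ref{Clark2})); with that reading your argument coincides with the paper's.
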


If we use $\overline{\mathcal{V}}_d(\H)$ denote the set of representatives of the isomorphism classes of the hypergraphs in  $\mathcal{V}_d(\H)$, then we have the following expression of $\Tr_d(\H)$, which was proved by Clark and Cooper \cite{CC2021}.

\begin{equation}\label{Clark2}\Tr_d(\H)=d(m-1)^{n} \sum_{H \in \overline{\mathcal{V}}_d(\H)}C_H N_{\H}(\underline{H}) \frac{|\text{Aut}(\underline{H})|}{|\text{Aut}(H)|}:=d(m-1)^{n} \sum_{H \in \overline{\mathcal{V}}_d(\H)}C_H (\#H \subseteq \H),
\end{equation}
where
$$(\#H \subseteq \H)=N_{\H}(\underline{H}) \frac{|\text{Aut}(\underline{H})|}{|\text{Aut}(H)|},$$
and $\text{Aut}(G)$ denotes the automorphism group of a hypergraph $G$.

\subsection{Traces of hypertrees}\label{TrTree}
A \emph{hypertree} is a connected and acyclic hypergraph.
Let $\T$ be an $m$-uniform hypertree.
Then we have $|V(\T)|=(m-1)|E(\T)|+1$.
We need the following lemma to characterize the Veblen hypergraphs $H \in \mathcal{V}_d(\T)$.

\begin{lem}[\cite{Fan2022}]\label{tree_root}
Let $H$ be an $m$-uniform Veblen multi-hypergraph whose underlying hypergraph $\underline{H}$ is a hypertree.
Then $H$ is uniquely Euler rooted such that all vertices of each edge occur as roots of the edge in a same number of times, and hence every edge of $H$ repeats in a multiple of $m$ times.
\end{lem}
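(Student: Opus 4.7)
The plan is to argue by induction on the number of edges of the underlying hypertree $\underline{H}$, exploiting the standard structural fact that every $m$-uniform hypertree with at least one edge has a \emph{pendant edge}: an edge $m-1$ of whose vertices have degree one in $\underline{H}$. Throughout, for each edge $e$ of $\underline{H}$ let $d_e$ denote the multiplicity of $e$ in $H$, so that $d_v(H)=\sum_{e\ni v} d_e$. The Eulerian condition on any rooting $F$ of $H$ is the identity $m\,r_v(F)=d_v(H)$ at every vertex $v$, where $r_v(F)=\sum_{e\ni v} r_{v,e}(F)$ and $r_{v,e}(F)$ counts the number of copies of $e$ rooted at $v$.

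The base case $|E(\underline{H})|=1$ is immediate: if $H$ consists of $d_e$ parallel copies of one edge $e$, each $v\in e$ has $d_v(H)=d_e$, so $m\,r_v(F)=d_e$ forces $r_{v,e}(F)=d_e/m$ for every $v\in e$. Thus all vertices of $e$ occur as roots equally often, $m\mid d_e$, and the rooting is forced. For the inductive step, pick a pendant edge $e$ of $\underline{H}$ with leaves $v_1,\dots,v_{m-1}$ (of degree one in $\underline{H}$) and cut vertex $u$. Each leaf satisfies $d_{v_i}(H)=d_e$, so the Eulerian identity at $v_i$ forces $r_{v_i,e}(F)=d_e/m$. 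Since every copy of $e$ must have some vertex as its root, the remaining copies are all rooted at $u$, yielding $r_{u,e}(F)=d_e-(m-1)d_e/m=d_e/m$. Hence every vertex of $e$ serves as root of $e$ exactly $d_e/m$ times, and in particular $m\mid d_e$.

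To close the induction, delete all $d_e$ copies of $e$ together with the leaves $v_1,\dots,v_{m-1}$ to obtain a multi-hypergraph $H'$ whose underlying hypergraph is the hypertree $\underline{H}-e$. The degree of $u$ drops by $d_e\equiv 0\pmod m$, so $H'$ is still $m$-valent, and at $u$ both the indegree and the outdegree of $R(F)$ decrease by $(m-1)d_e/m$, so the restriction $F'$ of $F$ to the edges of $H'$ is still Eulerian. By the induction hypothesis, $F'$ is the unique Euler rooting of $H'$ with the balanced-roots property, and re-attaching the copies of $e$ with their forced root assignments recovers uniqueness and balance for $F$ on all of $H$.

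The main obstacle is precisely the bookkeeping at the cut vertex $u$ when reducing $H$ to $H'$: one must check that removing the pendant edge preserves both the $m$-valence and the existence of an Euler rooting on the residual multi-hypergraph, and this is exactly what the leaf-forced identities $r_{u,e}(F)=d_e/m$ and $m\mid d_e$ guarantee. Apart from this, the only structural input is the existence of pendant edges in $m$-uniform hypertrees, which follows from a routine leaf-stripping argument on the underlying tree-like incidence structure.
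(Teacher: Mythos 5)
The paper does not actually prove this lemma; it is imported verbatim from the companion preprint \cite{Fan2022}, so there is no in-paper argument to compare against. Your proof is correct and is the natural one: the balance condition $m\,r_v(F)=d_v(H)$ at the $m-1$ degree-one vertices of a pendant edge forces each of them (and hence also the attaching vertex) to root exactly $\mu(e)/m$ copies, giving $m\mid\mu(e)$, and stripping the pendant edge preserves $m$-valence and the Eulerian balance at the attaching vertex so that induction closes. This is exactly the cored-vertex mechanism the paper does invoke elsewhere (Lemma 3.1), specialized to hypertrees and iterated; the only standard fact you use beyond that, the existence of a pendant edge in a hypertree with at least two edges, follows from $|V|=(m-1)|E|+1$ by a degree count. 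One cosmetic remark: ``uniquely Euler rooted'' in the paper refers to the root multiplicities $r_{v,e}$ being determined (the set $\mathcal{R}(H)$ of ordered rootings is larger, as the paper notes right after the lemma), which is precisely what your argument pins down, so your reading matches the intended one.
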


By Lemma \ref{tree_root}, for $d \in \mathbb{Z}^+$, $\mathcal{V}_d(\T) \ne \emptyset$ if and only if $m \mid d$.
So, in this subsection we always assume that \emph{$d$ is a positive multiple of $m$} when discussing $\Tr_d(\T)$; otherwise, $\Tr_d(\T)=0$ by Corollary \ref{TrV}.
For each $H \in \mathcal{V}_d(\T)$, let $\underline{H}=\hat{\T}$, a sub-hypertree of $\T$.
%For each $H$ is uniquely Euler rooted such that the vertices of each edge occur as roots in a same times, which implies that each edge of $H$ repeats in a multiple of $m$ times, and hence $m \mid d$. (Our result).
%Note that $H$ may correspond to more than one Euler rooting due to the ordering of the root vertices occurring in different edges.
Then $H$ can be expressed as a weighted hypertree $\hat{\T}(\omega)$, where
$$ \omega: E(\hat{\T}) \to \Z^+,$$
such that the multiplicity of an edge $e \in E(H)$ is $m\omega(e)$, and $\omg(\hat{\T}):=\sum_{e \in E(\hat{\T})} \omega(e)=d/m$, implying that $\hat{\T}$ contains at most $d/m$ edges.
So we have
$$\mathcal{V}_d(\T)=\cup_{\hat{\T}\in \mathcal{C}(\T)}\{\hat{\T}(\omega): \omg(\hat{\T})=d/m\}.$$

For each $F \in \mathcal{R}(\hat{\T}(\omega))$, by a direct computation, we have
$$ \tau(F)=\prod_{e \in E(\hat{\T})} \omg(e)^{m-1} m^{m-2}=m^{(m-2)|E(\hat{\T})|} \left(\prod_{e \in E(\hat{\T})}\omg(e)\right)^{m-1},$$
$$\prod_{v \in V(F)}d_v^+(F)=\prod_{v \in V(\hat{\T})} (m-1)d_v(\hat{\T}(\omg))=(m-1)^{|V(\hat{\T})|}\prod_{v \in V(\hat{\T})} d_v(\hat{\T}(\omg)),$$
where $d_v(\hat{\T}(\omega))=\sum_{e: v \in e} \omega(e)$, the weighted degree of the vertex $v$ in $\hat{\T}(\omega)$.

By Lemma \ref{tree_root}, for each $F \in \mathcal{R}(\hat{\T}(\omega))$, every vertex $v \in V(\hat{\T})$ occurs as root in $d_v(\T(\omega))$ times of $d_v(\hat{\T})$ distinct edges $e$ with multiplicity $\omg(e)$ respectively.
So $\mathcal{R}(\hat{\T}(\omega))$ has
$\displaystyle \prod_{v \in V(\hat{\T})} \frac{d_v(\hat{\T}(\omega))!}{r_v(\hat{\T}(\omega))}$
Euler rootings due to the ordering of the same roots in different edges, where $r_v(\hat{\T}(\omega)=\prod_{e: v \in e} \omega(e)!$.
So, for a given $H=\hat{\T}(\omega) \in \mathcal{V}_d(\T)$,
\begin{align}\label{SpeT}
\begin{split}
 C_H & =  \prod_{v \in V(\hat{\T})} \frac{d_v(\hat{\T}(\omega))!}{r_v(\hat{\T}(\omega))} \cdot \frac{\tau(F)}{\prod_{v \in V(F)} d_v^+(F)}\\
 & = (m-1)^{-|V(\hat{\T})|}m^{(m-2)|E(\hat{\T})|} \left(\prod_{e \in E(\hat{\T})}\omg(e)\right)^{m-1} \prod_{v \in V(\hat{\T})} \frac{(d_v(\hat{\T}(\omega))-1)!}{r_v(\hat{\T}(\omega))}.
 \end{split}
\end{align}

Denote
$$ c_{d,m}(\hat{\T})= \sum_{\omg: \omg(\hat{\T})=d/m} \left(\prod_{e \in E(\hat{\T})}\omg(e)\right)^{m-1} \prod_{v \in V(\hat{\T})} \frac{(d_v(\hat{\T}(\omega))-1)!}{r_v(\hat{\T}(\omega))}.$$
Denote by $\mathbf{T}^m_{\le t}$ (respectively, $\mathbf{T}^m_{t}$) the set of $m$-uniform hypertrees with at most $t$ edges (respectively, exactly $t$ edges) up to isomorphism.
Then by Corollary \ref{TrV}, for $m \mid d$,
\begin{equation}\label{Bu1}
\begin{split}
\Tr_d(\T)& =d(m-1)^{|V(\T)|} \sum_{H \in \mathcal{V}_d(\T)}C_H N_{\T}(\underline{H})\\
& =\sum_{\hat{\T}(\omg): \hat{\T} \in \mathbf{T}^m_{\le d/m},  \omg(\hat{\T})=d/m} d(m-1)^{{|V(\T)|-|V(\hat{\T})|}}m^{(m-2)|E(\hat{\T})|} \\
& \quad \cdot \left(\prod_{e \in E(\hat{\T})}\omg(e)\right)^{m-1} \prod_{v \in V(\hat{\T})} \frac{(d_v(\hat{\T}(\omega))-1)!}{r_v(\hat{\T}(\omega))} \cdot N_{\T}(\hat{\T})\\
&= \sum_{\hat{\T} \in \mathbf{T}^m_{\le d/m}} d(m-1)^{{|V(\T)|-|V(\hat{\T})|}}m^{(m-2)|E(\hat{\T})|}\\
&\quad \cdot
\sum_{\omg: \omg(\hat{\T})=d/m} \left(\prod_{e \in E(\hat{\T})}\omg(e)\right)^{m-1} \prod_{v \in V(\hat{\T})} \frac{(d_v(\hat{\T}(\omega))-1)!}{r_v(\hat{\T}(\omega))}\cdot N_{\T}(\hat{\T})\\
& = \sum_{\hat{\T} \in \mathbf{T}^m_{\le d/m}} d(m-1)^{(m-1)(|E(\T)|-|E(\hat{\T})|)}m^{(m-2)|E(\hat{\T})|}
c_{d,m}(\hat{\T}) N_{\T}(\hat{\T})\\
&=\sum_{k=1}^{d/m} d(m-1)^{(m-1)(|E(\T)|-k|)}m^{k(m-2)} \sum_{\hat{\T} \in \mathbf{T}^m_k} c_{d,m}(\hat{\T}) N_{\T}(\hat{\T}).
\end{split}
\end{equation}

Now suppose that $\T=T^m$, the $m$-th power of a tree $T$.
Then $H=\hat{\T}(\omg)=\hat{T}^m(\omg)$ for some sub-tree $\hat{T}$ of $T$.
For each edge $e=\{u,v\} \in E(T)$, it corresponds to an edge $e^m:=\{u,v,e_{i1},\ldots,e_{i,m-2}\} \in E(T^m)$.
The weighted hypertree $\hat{T}^m(\omg)$ induces a weighted tree $\hat{T}(\omg)$ with weight $\omg: E(\hat{T}) \to \Z^+$ such that $\omg(e)=\omg(e^m)$ and $\omg(\hat{T}):=\sum_{e \in E(\hat{T})} \omg(e)=d/m$.
We have
$$ \prod_{v \in V(\hat{T}^m)} \frac{(d_v(\hat{T}^m(\omega))-1)!}{r_v(\hat{T}^m(\omega))}=
\prod_{v \in V(\hat{T})} \frac{(d_v(\hat{T}(\omega))-1)!}{r_v(\hat{T}(\omega))}
\prod_{e \in E(\hat{T})}\left(\frac{(\omg(e)-1)!}{\omg(e)!}\right)^{m-2}.
$$
Hence
\begin{equation}\label{cdom} c_{d,m}(\hat{T}^m)= \sum_{\omg:\omg(\hat{T})=d/m} \prod_{e \in E(\hat{T})}\omg(e) \prod_{v \in V(\hat{T})} \frac{(d_v(\hat{T}(\omega))-1)!}{r_v(\hat{T}(\omega))}=:\tilde{c}_{d/m}(\hat{T}).
\end{equation}
So, for $m \mid d$,
\begin{equation}\label{Bu2}
\Tr_d(T^m) =\sum_{k=1}^{d/m} d(m-1)^{(m-1)(|E(T)|-k)}m^{k(m-2)} \sum_{\hat{T} \in \mathbf{T}_k} \tilde{c}_{d/m}(\hat{T}) N_{T}(\hat{T}).
\end{equation}

We note that Eq. (\ref{Bu1}) and Eq. (\ref{Bu2}) with a slightly modification were proved by Chen, Bu and Zhou \cite{CBZ2022}. Here we show them in a different way also for the convenience in the following discussion.

\section{Traces of power of unicyclic graphs}
In this section, we will give a decomposition formula for the traces of the power of unicyclic graphs.
Denote by $C_n$ a cycle on $n$ vertices (as a graph).
Let $U^m$ be the $m$-th power of a unicyclic graph $U$ which contains a cycle $C_n$, where $m \ge 3$.
%the Veblen hypergraphs $H \in \mathcal{V}(U^m)$ and their rooting sets $\mathcal{R}(H)$.
The following lemma is important for our discussion.

\begin{lem}[\cite{Fan2022}]\label{core}
Let $H$ be an $m$-uniform Veblen multi-hypergraph, and let $e$ be an edge of $H$  which contains a cored vertex.
If $H$ has an Euler rooting, then $e$ repeats $k \cdot m$ times for some positive integer $k$, and all cored vertices in $e$ occur as a root of $e$ in $k$ times.
\end{lem}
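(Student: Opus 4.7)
The plan is to read the Eulerian condition locally at a cored vertex of $e$. Fix any Euler rooting $F$ of $H$ and let $v$ be a cored vertex contained in $e$. By definition $v$ has degree one in the underlying hypergraph $\underline{H}$, so every edge of the multi-hypergraph $H$ that contains $v$ must be a copy of $e$. Denoting by $\mu$ the multiplicity of $e$ in $H$, this gives $d_v(H)=\mu$; moreover, whenever $v$ is chosen as a root in $F$, the corresponding rooted edge is necessarily a copy of $e$, so $r_v(F)$ is exactly the number of copies of $e$ rooted at $v$.

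Next I would compare the in- and out-degrees of $v$ in the directed multigraph $R(F)$. Each of the $r_v(F)$ copies of $e$ rooted at $v$ contributes $m-1$ outgoing arcs at $v$, and each of the remaining $\mu-r_v(F)$ copies of $e$ (which are rooted elsewhere in $e$) contributes exactly one incoming arc at $v$. The Eulerian condition at $v$ then reads
\[
(m-1)\,r_v(F)=\mu-r_v(F),
\]
which simplifies to $\mu=m\,r_v(F)$. Setting $k:=r_v(F)$, this is precisely the assertion that $e$ repeats $km$ times and that the particular cored vertex $v$ appears as a root of $e$ in exactly $k$ times. Positivity of $k$ follows because $e$ actually belongs to $H$, so $\mu\geq 1$ and hence $k\geq 1$.

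For the last claim, the same reasoning applied to any other cored vertex $v'\in e$ yields $d_{v'}(H)=\mu$, and hence $r_{v'}(F)=\mu/m=k$ as well, so every cored vertex in $e$ occurs as a root of $e$ in the same number $k$ of times. There is no substantive obstacle here: everything reduces to the local Eulerian identity at a vertex whose degree in $H$ equals the multiplicity of a single edge. The only subtle point to record is that, because $v$ has degree one in $\underline{H}$, no edge other than a copy of $e$ is incident with $v$, so $r_v(F)$ really counts copies of $e$ rooted at $v$; this is exactly where the hypothesis that $v$ is cored is used.
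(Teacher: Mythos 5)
Your argument is correct and complete: reading the Eulerian balance condition at a cored vertex $v$ of $e$, where the out-degree is $(m-1)r_v(F)$ and the in-degree is $\mu-r_v(F)$ precisely because every edge of $H$ through $v$ is a copy of $e$, immediately yields $\mu=m\,r_v(F)$, and the same identity at each cored vertex of $e$ gives the common value $k$. The paper itself only cites this lemma from an earlier work rather than proving it, and your local degree-balance argument is the natural (and essentially the intended) one, with the role of the cored hypothesis correctly isolated.
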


Let $H \in \mathcal{V}_d(U^m)$.
As $m \ge 3$, each edges of $H$ contains $m-2$ cored vertices.
 By Lemma \ref{core}, each edge of $H$ occurs in a multiple of $m$ times.
  So, for $d \in \mathbb{Z}^+$,  $\mathcal{V}_d(U^m) \ne \emptyset$ if and only if $m \mid d$.
We will assume $m \mid d$ in this section.

\subsection{Traces of power of cycles}\label{powC}
We first consider a special case, namely, $U=C_n$, where $C_n$ has vertices $v_1,\ldots, v_n$ and edges $e_i=\{v_i,v_{i+1}\}$ for $i \in [n]$, $v_{n+1}=v_1$.
We label the edges of $C_n^m$ as $e^m_i=\{v_i,v_{i+1},e_{i1}, \ldots, e_{i,m-2}\}$ for $i \in [n]$.
Let
$$\mathcal{V}_d(C_n^m;[C_n^m])=\{ H\in \mathcal{V}_d(C_n^m): \underline{H}=C_n^m\}.$$
Let $H \in \mathcal{V}_d(C_n^m;[C_n^m])$.
%As $m \ge 3$, each edges of $H$ ($C_n^m$) contains $m-2$ cored vertices.
 By Lemma \ref{core},
 %each edge $e^m_i$ occurs in $H$ in a multiple of $m$ times, say $k_im$ for some $k_i \in \Z^+$, $i \in [n]$.
%  So $\mathcal{V}_d(C_n^m;[C_n^m]) \ne \emptyset$ only if $m \mid d$.
$H$ is a weighted cycle $C_n^m(\omg)$ with $\omg: E(C_n^m) \to \Z^+$ such that
$e^m_i$ repeats $m \omg(e^m_i)$ times for $i \in [n]$, where $\omg(C_n^m):=\sum_{i=1}^n\omg(e^m_i)=d/m$.
So
$$\mathcal{V}_d(C_n^m;[C_n^m])=\{ C_n^m(\omg): \omg(C_n^m)=d/m\}.$$

Let $H = C_n^m(\omg)$ and $F \in \mathcal{R}(C_n^m(\omg))$.
It is known that each cored vertex of $e^m_i$ occurs as a root of $e^m_i$ in $\omg(e^m_i)$ times for $i \in [n]$ by Lemma \ref{core}.
Let $t_{ii}$ (respectively, $t_{i,i+1}$) be the times of $v_i$ (respectively, $v_{i+1}$) as a root of $e^m_i$ for $i \in [n]$, where the subscripts are taken modulo $n$.
As $e^m_i$ repeats $m \omg(e^m_i)$ times and each cored vertex of $e^m_i$ occurs as a root of $e^m_i$ in $\omg(e^m_i)$ times,
we have
\begin{equation}\label{Rcycle1} t_{ii}+t_{i,i+1}=2\omg(e^m_i).
\end{equation}
As $R(F)$ is Eulerian,
$$ (t_{i-1,i}+t_{ii})(m-1)=m \omg(e^m_{i-1})-t_{i-1,i}+m \omg(e^m_{i})-t_{ii},$$
which implies that
\begin{equation}\label{Rcycle2} t_{i-1,i}+t_{i,i}=\omg(e^m_{i-1})+\omg(e^m_i).
\end{equation}
Suppose that $i_0 \in [n]$ such that $\omg(e^m_{i_0})=\min_{i \in [n]} \omg(e^m_i)=:\omg_{\min}$.
By Eqs. (\ref{Rcycle1}) and (\ref{Rcycle2}), we have
$$ t_{ii}=\omg(e^m_i)-\omg(e^m_{i_0})+t_{i_0i_0},~
t_{i,i+1}=\omg(e^m_i)+\omg(e^m_{i_0})-t_{i_0i_0}, t_{i_0i_0} \in [0, 2\omg(e^m_{i_0})],$$
or equivalently
$$ t_{ii}=\omg(e^m_i)-\omg_{\min}+x,~
t_{i,i+1}=\omg(e^m_i)+\omg_{\min}-x, x \in [0, 2\omg_{\min}].$$
So
$$\mathcal{R}(C_n^m(\omg))=\cup_{x=0}^{2 \omg_{\min}} \mathcal{R}(C_n^m(\omg);x),$$
where $\mathcal{R}(C_n^m(\omg);x)$ consists of those rootings $F \in \mathcal{R}(C_n^m(\omg))$ such that
each cored vertex of $e^m_i$  acts as a root of $e^m_i$  in $\omg(e^m_i)$ times, and for $i \in [n]$,
$v_i$ (respectively, $v_{i+1}$) acts as a root of $e^m_i$ in $t_{ii}=\omg(e^m_i)-\omg_{\min}+x$ (respectively, $t_{i,i+1}=\omg(e^m_i)+\omg_{\min}-x$) times.

For each $F \in \mathcal{R}(C_n^m(\omg);x)$, we now calculate $\tau(F)$.
Note that $ \tau(F)$ is the principal minor of $L(R(F))$ by deleting the row and column both indexed by a specified vertex $v$.
Here we take $v=v_1$ and write the $L(R(F))$ as follows, where $t_i:=(t_{i-1,i}+t_{ii})(m-1)$,
$\omg_i:=\omg(e^m_i)$ for $i \in [n]$, $\mathbf{1}$ is an all-one's column vector of size $m-2$, $K=mI-J$ of size $m-2$, and $I$ is an identity matrix and $J$ is an all-one's matrix.
{\small
\begin{align*}
& \left[
\begin{array}{c|cccc|ccccc}
t_1     & -t_{11} &    0     & \cdots  & -t_{n1} & -t_{11}\mathbf{1}^\top & 0 & 0& \cdots & -t_{n1}\mathbf{1}^\top\\
\hline
-t_{12} & t_2     & -t_{22} & \cdots   & 0 & -t_{12}\mathbf{1}^\top &  -t_{22}\mathbf{1}^\top & 0& \cdots & 0\\
     \vdots   &  \ddots       &  \ddots       & \ddots & \vdots  & \vdots &  \ddots& \ddots  &\vdots & \vdots \\
0 & \cdots & -t_{n-2,n-1} & t_{n-1} & -t_{n-1,n-1} & 0&  \cdots&  -t_{n-2,n-1}\mathbf{1}^\top   & -t_{n-1,n-1}\mathbf{1}^\top &0\\
-t_{nn} & \cdots & 0& -t_{n-1,n} & t_n & 0 &\cdots & 0& -t_{n-1,n}\mathbf{1}^\top   & -t_{n,n}\mathbf{1}^\top\\
\hline
 -\omg_1 \mathbf{1} &  -\omg_1 \mathbf{1} & 0 & \cdots & 0 & \omg_1K &  O & O & \cdots & O \\
 0 &   -\omg_2 \mathbf{1} &  -\omg_2 \mathbf{1}  &  \cdots & 0 & O & \omg_2K & O & \cdots &O  \\
 \vdots  & \vdots  & \ddots   & \ddots & \vdots & \vdots & \vdots & \ddots & \vdots&O \\
  0 & \cdots  &\cdots &-\omg_{n-1} \mathbf{1} & -\omg_{n-1} \mathbf{1}& O & \cdots& O & \omg_{n-1}K & O \\
 -\omg_n \mathbf{1} & \cdots  & \cdots & 0 & -\omg_n \mathbf{1}& O & \cdots& O &O &\omg_nK
\end{array}
\right]\\
& := \left[\begin{array}{ccc}
t_1 & * & * \\
* & A & B \\
* & C & D
\end{array}
\right].
\end{align*}
}
So
$$
\tau(F)= \det \left[\begin{array}{cc}
A & B \\
C& D
\end{array} \right]\\
= \det D \det (A-BD^{-1}C).$$
We easily get
$$ \det D=2^n m^{n(m-3)} \left(\prod_{i=1}^n \omg_i\right)^{m-2}.$$
Note that $\mathbf{1}^\top K=2 \mathbf{1}^\top$ so that $\mathbf{1}^\top K^{-1}=\frac{1}{2}\mathbf{1}^\top$.
We have
$$
BD^{-1}C=\frac{m-2}{2}
\left[
\begin{array}{ccccc}
\frac{t_2}{m-1} & t_{22} & 0  & \cdots &  0\\
t_{23} & \frac{t_3}{m-1} & t_{33} & \cdots & 0 \\
 \vdots & \ddots & \ddots & \ddots &  \vdots\\
0 & \cdots & t_{n-2,n-1} & \frac{t_{n-1}}{m-1} & t_{n-1,n-1}\\
0 & \cdots & 0 & t_{n-1,n} & \frac{t_n}{m-1}
 \end{array}
 \right],
$$
and hence
$$
A-BD^{-1}C = \frac{m}{2}
\left[
\begin{array}{ccccc}
t_{12}+t_{22} & -t_{22} & 0  & \cdots &  0\\
-t_{23} & t_{23}+t_{33} & -t_{33} & \cdots & 0 \\
 \vdots & \ddots & \ddots & \ddots &  \vdots\\
0 & \cdots & -t_{n-2,n-1} &  t_{n-2,n-1}+t_{n-1,n-1} & -t_{n-1,n-1}\\
0 & \cdots & 0 & -t_{n-1,n} & t_{n-1,n}+t_{nn}
 \end{array}
 \right].
$$
We get
$$\det (A-BD^{-1}C)=\left(\frac{m}{2}\right)^{n-1} \left(\prod_{i=2}^n t_{ii} + \sum_{l=1}^{n-2} \prod_{i=1}^l t_{i,i+1} \prod_{i=l+2}^n t_{ii} + \prod_{i=1}^{n-1}t_{i,i+1}\right)=\left(\frac{m}{2}\right)^{n-1} \sum_{l=0}^{n-1} \prod_{i=1}^l t_{i,i+1} \prod_{i=l+2}^n t_{ii},$$
and therefore
$$ \tau(F)=2 m^{n(m-2)-1} \left(\prod_{i=1}^n \omg_i\right)^{m-2} \sum_{l=0}^{n-1} \prod_{i=1}^l t_{i,i+1} \prod_{i=l+2}^n t_{ii}.$$
By the diagonal entries of $L(R(F))$, we also get
$$\prod_{v \in V(F)} d_v^+(F)=\prod_{i=1}^n t_i \prod_{i=1}^n (\omg_i(m-1))^{m-2}=(m-1)^{n(m-1)} \left(\prod_{i=1}^n \omg_i\right)^{m-2} \prod_{i=1}^n (t_{i-1,i}+t_{ii}).$$

%By the above discussion,
%$$ \mathcal{V}_d(C_n^m; [C_n^m])=\{C_n^m(\omg): \omg(C_n^m)=d/m\}.$$
%For each $C_n^m(\omg) \in \mathcal{V}_d(C_n^m; [C_n^m])$,
%$$\mathcal{R}(C_n^m(\omg))=\bigcup_{x=0}^{2 \omg_{\min}} \mathcal{R}(C_n^m(\omg);x),$$
%where $\mathcal{R}(C_n^m(\omg);x)$ consists of those rootings $F$ such that
%each cored vertex of $e^m_i$  acts as a root of $e^m_i$  in $\omg(e^m_i)$ times, and for $i \in [n]$,
%$v_i$ (resp. $v_{i+1}$) acts as a root of $e^m_i$ in $t_{ii}=\omg(e^m_i)-\omg_{\min}+x$ (resp $t_{i,i+1}=\omg(e^m_i)+\omg_{\min}-x$) times, and $\omg_{\min}=\min\{\omg(e): e \in E(C_n^m)\}$.
Note $\mathcal{R}(C_n^m(\omg);x)$ contains exactly
$ \prod_{i=1}^n {t_{i-1,i}+t_{ii} \choose t_{ii}}$
rootings $F$, all corresponding the same values of $\tau(F)$ and $\prod_{v \in V(F)} d_v^+(F)$.
So we get $C_H$ for $H=C_n^m(\omg)$ as follows:
\begin{equation}\label{CC}
\begin{split}
C_{C_n^m(\omg)}&=\sum_{F \in \mathcal{R}(C_n^m(\omg))} \frac{\tau(F)}{\prod_{v \in V(F)} d_v^+(F)}\\
&= \sum_{x=0}^{2\omg_{\min}} \sum_{F \in \mathcal{R}(C_n^m(\omg);x)} \frac{\tau(F)}{\prod_{v \in V(F)} d_v^+(F)}\\
 &= \sum_{x=0}^{2\omg_{\min}} \prod_{i=1}^n {t_{i-1,i}+t_{ii} \choose t_{ii}}\cdot \frac{2 m^{n(m-2)-1} \sum_{l=0}^{n-1} \prod_{i=1}^l t_{i,i+1} \prod_{i=l+2}^n t_{ii}}{(m-1)^{n(m-1)}  \prod_{i=1}^n (t_{i-1,i}+t_{ii})}\\
 &= \frac{2 m^{n(m-2)-1}}{(m-1)^{n(m-1)}  \prod_{i=1}^n (\omg_{i-1}+\omg_{i})} \sum_{x=0}^{2\omg_{\min}}
\prod_{i=1}^n {\omg_{i-1}+\omg_{i} \choose \omg_i-\omg_{\min}+x}
\sum_{l=0}^{n-1} \prod_{i=1}^l (\omg_i+\omg_{\min}-x) \prod_{i=l+2}^n (\omg_i-\omg_{\min}+x).
\end{split}
\end{equation}

Denote
$$ f_{C_n}(\omg)=\frac{1}{\prod_{i=1}^n (\omg_{i-1}+\omg_{i})}\sum_{x=0}^{2\omg_{\min}}
\prod_{i=1}^n {\omg_{i-1}+\omg_{i} \choose \omg_i-\omg_{\min}+x}
\sum_{l=0}^{n-1} \prod_{i=1}^l (\omg_i+\omg_{\min}-x) \prod_{i=l+2}^n (\omg_i-\omg_{\min}+x)$$
and
$$\Tr_d(C_n^m; [C_n^m])=d(m-1)^{|V(C_n^m)|} \sum_{H \in \mathcal{V}_d(C_n^m; [C_n^m])} C_H N_{C_n^m}(\underline{H}).$$
%We will get the main result in this subsection.

\begin{lem}\label{TrCycle} For $m \mid d$,
\begin{equation}\label{Trcyc1} \Tr_d(C_n^m; [C_n^m])=\sum_{\omg: \omg(C_n^m)=d/m} 2d m^{n(m-2)-1} f_{C_n}(\omg).
\end{equation}
If $d/m=n$, then
\begin{equation}\label{Trcyc2}\Tr_{nm}(C_n^m; [C_n^m])=2n(n+1)m^{n(m-2)}.
\end{equation}
\end{lem}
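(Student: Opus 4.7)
The plan is to derive both equations directly from the explicit formula for $C_{C_n^m(\omg)}$ in Eq.~(\ref{CC}) established just before the statement; no new machinery is required.

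For Eq.~(\ref{Trcyc1}), I would simply substitute the expression for $C_H$ from (\ref{CC}) into the definition of $\Tr_d(C_n^m; [C_n^m])$ given immediately preceding the lemma. Every $H \in \mathcal{V}_d(C_n^m; [C_n^m])$ has underlying hypergraph equal to $C_n^m$ itself, so $N_{C_n^m}(\underline{H}) = 1$, and such $H$ are in bijection with weight functions $\omg$ satisfying $\omg(C_n^m) = d/m$. Using $|V(C_n^m)| = n + n(m-2) = n(m-1)$, the prefactor $(m-1)^{n(m-1)}$ cancels the matching factor in the denominator of $C_{C_n^m(\omg)}$, and what remains is exactly $2dm^{n(m-2)-1} f_{C_n}(\omg)$ by the definition of $f_{C_n}$. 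This step is essentially bookkeeping.

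For Eq.~(\ref{Trcyc2}), the key observation is that when $d/m = n$, the constraint $\sum_{i=1}^n \omg_i = n$ with each $\omg_i \in \mathbb{Z}^+$ forces $\omg_i = 1$ for every $i$, so the sum over $\omg$ collapses to a single term. With this all-ones weight, $\omg_{\min} = 1$ and each $\omg_{i-1} + \omg_i = 2$, so $f_{C_n}(\omg)$ reduces to $2^{-n}$ times a sum over $x \in \{0,1,2\}$ of expressions of the form $\binom{2}{x}^n \sum_{l=0}^{n-1} (2-x)^l x^{n-l-1}$. At $x = 0$ and $x = 2$, only a single index $l$ in the inner sum survives (all other terms contain a factor of $0$), each contributing $2^{n-1}$; at $x = 1$ every summand equals $1$, giving $n \cdot 2^n$. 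Adding yields $(n+1)2^n$, hence $f_{C_n}(\omg) = n+1$, and substituting $d = nm$ into Eq.~(\ref{Trcyc1}) produces $2nm \cdot m^{n(m-2)-1}(n+1) = 2n(n+1)m^{n(m-2)}$.

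The only real obstacle is the boundary evaluation of the inner sum at $x \in \{0,2\}$: one must correctly interpret the conventions $0^0 = 1$ and recognize which single value of $l$ avoids a vanishing factor. Beyond this piece of careful index-tracking, the argument is pure substitution and requires no additional structural input.
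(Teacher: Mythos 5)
Your proposal is correct and follows essentially the same route as the paper: substitute Eq.~(\ref{CC}) into the definition of $\Tr_d(C_n^m;[C_n^m])$, use $N_{C_n^m}(\underline{H})=1$ and the bijection between such $H$ and weights $\omg$ with $\omg(C_n^m)=d/m$, cancel the $(m-1)^{n(m-1)}$ factors, and then for $d/m=n$ collapse to the all-ones weight. Your explicit evaluation of $f_{C_n}(\omg)=(n+1)$ via the boundary terms $x\in\{0,2\}$ contributing $2^{n-1}$ each and $x=1$ contributing $n\cdot 2^n$ is a correct verification of a value the paper merely asserts.
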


\begin{proof}
By the previous discussion,
each $H \in \mathcal{V}_d(C_n^m; [C_n^m])$ is a weighted cycle $C_n^m(\omg)$, and $\underline{H}=C_n^m$, implying that $N_{C_n^m}(\underline{H})=1$.
By definition and Eq. (\ref{CC}),
\begin{align*}
%\begin{split}
\Tr_d(C_n^m; [C_n^m])&=d(m-1)^{n(m-1)}\sum_{H \in \mathcal{V}_d(C_n^m; [C_n^m])} C_H N_{C_n^m}(\underline{H})\\
& =\sum_{\omg: \omg(C_n^m)=d/m}
d(m-1)^{n(m-1)} C_{C_n^m(\omg)}\\
&  =\sum_{\omg: \omg(C_n^m)=d/m}
d(m-1)^{n(m-1)}\frac{2 m^{n(m-2)-1}}{(m-1)^{n(m-1)}} f_{C_n}(\omg) \\
%&
%\sum_{x=0}^{2\omg_{\min}}  \prod_{i=1}^n {t_{i-1,i}+t_{ii} \choose t_{ii}} \frac{\tau(D_F)}{\prod_{v \in V(D_F)} d_{D_F}^+(v)}\\
%&=\sum_{\omg: \omg(C_n^m)=d/m} 2d m^{n(m-2)-1}\frac{1}{\prod_{i=1}^n (t_{i-1,i}+t_{ii}) }\sum_{x=0}^{2\omg_{\min}}
%\prod_{i=1}^n {t_{i-1,i}+t_{ii} \choose t_{ii}}
%\sum_{l=0}^{n-1} \prod_{i=1}^l t_{i,i+1} \prod_{i=l+2}^n t_{ii}\\
%&=\sum_{\omg: \omg(C_n^m)=d/m} 2d m^{n(m-2)-1} \\
%& \quad \cdot \frac{1}{\prod_{i=1}^n(\omg_{i-1}+\omg_{i})}\sum_{x=0}^{2\omg_{\min}}
%\prod_{i=1}^n {\omg_{i-1}+\omg_{i} \choose \omg_i-\omg_{\min}+x}
%\sum_{l=0}^{n-1} \prod_{i=1}^l (\omg_i+\omg_{\min}-x) \prod_{i=l+2}^n (\omg_i-\omg_{\min}+x)\\
&=\sum_{\omg: \omg(C_n^m)=d/m} 2d m^{n(m-2)-1} f_{C_n}(\omg).
%\end{split}
\end{align*}

If $d/m=n$, then $\omg_i=1$ for $i \in [n]$,
$f_{C_n}(\omg)=n+1$, and therefore
$$\Tr_{nm}(C_n^m; [C_n^m])=2n(n+1)m^{n(m-2)}.$$
\end{proof}

\subsection{Trace of power of unicyclic graphs}
Let $U^m$ be the $m$-th power of a unicyclic graph $U$ which contains a cycle $C_n$. The labeling of the vertices and edges of $C_n^m$ is same as in Section \ref{powC}.
We have a decomposition:
  \begin{equation}\label{setdec}
  \mathcal{V}_d(U^m)=\mathcal{V}_d(U^m;[\hat{C}_n^m])\cup  \mathcal{V}_d(U^m;[C_n^m]),
  \end{equation}
where $\mathcal{V}_d(U^m;[\hat{C}_n^m])$ (respectively, $\mathcal{V}_d(U^m;[C_n^m])$ ) denotes the subset of  $\mathcal{V}_d(U^m)$ which consists of Veblen hypergraphs that contain not all edges of $C_n^m$ (respectively, contain all edges of $C_n^m$).
By Lemma \ref{core}, either of three sets in (\ref{setdec}) is nonempty if and only if $m \mid d$.
So we assume $m \mid d$ in the following discussion.

For each $H \in \mathcal{V}_d(U^m;[\hat{C}_n^m])$, $\underline{H}$ is a hypertree, and by Lemma \ref{tree_root}, $H$ is uniquely rooted and $H=\hat{T}^m(\omg)$ for some tree $\hat{T}$ contained in $U$ with $\omg(\hat{T})=d/m$.
So  by Corollary \ref{TrV}, Eqs. (\ref{SpeT}) and (\ref{cdom}),
\begin{equation}\label{TraSum1}
\begin{split}
 \Tr_d(U^m;[\hat{C}_n^m])&:=d(m-1)^{|V(U^m)|} \sum_{H \in \mathcal{V}_d(U^m;[\hat{C}_n^m])} C_H N_{U^m}(\underline{H})\\
 &=
 \sum_{k=1}^{d/m} d(m-1)^{(m-1)(|E(U)|-k)-1}m^{k(m-2)} \sum_{\hat{T} \in \mathbf{T}_k} \tilde{c}_{d/m}(\hat{T}) N_{U}(\hat{T}).
 \end{split}
 \end{equation}

Suppose that $U$ is obtained from $C_n$ by attaching rooted trees $T_1,\ldots,T_n$ with their roots identified with $v_1,\ldots,v_n$ of the cycle respectively, where some trees $T_i$ may be trivial containing only one vertex (namely $v_i$).
Here we stress that each $T_i$ is a rooted tree with root $v_i$ for $i \in [n]$.
Note an isomorphism between two rooted trees is one preserving the roots.
If $\hat{T}$ is a rooted tree with root $v$, then $N_{T_i}(\hat{T})$ denotes the number of subgraphs of $T_i$ with root $v_i$ that is isomorphic to $\hat{T}$ mapping $v_i$ to $v$.

For each $H \in  \mathcal{V}_d(U^m;[C_n^m])$, $H|_{T_i^m}$ (if nonempty) for $i \in [n]$ and $H|_{C_n^m}$ are all Vebelen hypergraphs.
 By Lemma \ref{tree_root}, $H|_{T_i^m}=\hat{T}_i^m(\omg^i)$ for some subtree $\hat{T}_i$ of $T_i$ containing the vertex $v_i$ such that each edge $e$ of $\hat{T}_i^m$ repeat $m \omg^i(e)$ times in $H|_{T_i^m}$.
 Similarly, by Lemma \ref{core}, $H|_{C_n^m}=C_n^m(\omg^0)$ such that each edge $e$ of $C_n^m$ repeat $m \omg^0(e)$ times in $H|_{C_n^m}$.
 Surely, the number of edges of $H|_{T_i^m}$ for $i \in [n]$ and the number of edges of $H|_{C_n^m}$ are multiples of $m$.
 We also note $\mathcal{V}_d(U^m;[C_n^m]) \ne \emptyset$ only if $d/m \ge n$, with equality only if $\mathcal{V}_d(U^m;[C_n^m])=\{C_n^m(\omg^0)\}$ with $\omg^0(e)=1$ for each edge $e \in E(C_n^m)$.

% which are sub-hypertrees of $T_i^m$ containing $v_i$ with weights $\omg^i$ and the cycle $C_n^m$ with weight $\omg^0$ respectively.

By the discussion before, we have
\begin{equation}\label{Dec2} \mathcal{V}_d(U^m;[C_n^m])
=\cup_{d_0+\cdots+d_n=d, d_0/m \ge n \atop m \mid d_i, i\in \{0,1,\ldots,n\}}\cup_{0 \le s_i \le d_i/m}\mathcal{V}_{d_0,d_1,\ldots,d_n}^{s_1,\ldots,s_n}(U^m),
\end{equation}
where $\mathcal{V}_{d_0,d_1,\ldots,d_n}^{s_1,\ldots,s_n}(U^m)$ is the set of Veblen hypergraphs contains $d_0>0$ edges of $C_n^m$, and $d_i$ edges of $T_i^m$ among of which $m s_i$ edges contains the vertex $v_i$ for $i\in [n]$.
Furthermore,
$$\mathcal{V}_{d_0,d_1,\ldots,d_n}^{s_1,\ldots,s_n}(U^m)
=\cup\{C_n^m(\omg^0)\cup \cup_{i=1}^n\hat{T}_i^m(\omg^i): C_n^m(\omg^0) \in \mathcal{V}_{d_0}(C_n^m;[C_n^m]), \hat{T}_i^m(\omg^i) \in \mathcal{V}_{d_i;s_i}(T_i^m;[v_i]),i \in [n]\},$$
where $\mathcal{V}_{d_i;s_i}(T_i^m;[v_i])=\{\hat{T}_i^m(\omg^i) \in \mathcal{V}_{d_i}(T_i^m):  d_{v_i}(\hat{T}_i^m(\omg^i))=s_i,\hat{T}_i \subseteq T_i\}$, and $\hat{T}_i$ is a rooted tree with root $v_i$.
Note that in the above decomposition, some $d_i$'s are zeros, and $d_i=0$ if and only if $s_i=0$.

%Then
%$$ \Tr_d(U^m)=\Tr_d(U^m;[\hat{C}_n^m])+\Tr_d(U^m;[C_n^m]).$$
%The Veblen hypergraph contributed to $\Tr_d(U^m;[\hat{C}_n^m])$ is a power tree, and by Eqs. (\ref{SpeT}) and (\ref{Bu2}),
%$$ \Tr_d(U^m;[\hat{C}_n^m])=\sum_{k=1}^{d/m} d(m-1)^{(m-1)(|E(U)|-k)-1}m^{k(m-2)} \sum_{\hat{T} \in \mathbf{T}_k} \tilde{c}_{d/m}(\hat{T}) N_{T}(\hat{T}).$$
%We now give a decomposition formula for $\Tr_d(U^m;[C_n^m])$.
%Suppose that $U$ is obtained from $C_n$ by attaching trees $T_1,\ldots,T_n$ to the vertices $v_1,\ldots,v_n$ of the cycle respectively, where some trees $T_i$ may be empty.
%We have
%$$ \mathcal{V}_d(U^m;[\hat{C}_n^m])
%=\bigcup_{d_0+\ldots+d_n=d}\bigcup_{s_i \le d_i/m}\mathcal{V}_{d_0,d_1,\ldots,d_n}^{s_1,\ldots,s_n}(U^m),$$
%where $\mathcal{V}_{d_0,d_1,\ldots,d_n}(U^m)$ is the set of Veblen hypergraphs contains $d_0>0$ edges of $C_n^m$, and $d_i$ edges of $T_i^m$ together with the vertex $v_i$ among of which $m s_i$ edges contains the vertex $v_i$ for $i\in [n]$.

Denote $N=(m-1)|E(U)|$ the number of vertices of $U^m$, $N_i=(m-1)|E(T_i)|+1$ the number of vertices of $T_i^m$ for $i \in [n]$, and $N_0=(m-1)n$ the number of vertices of $C_n^m$.
Then $N=N_0+\sum_{i \in [n]}N_i - n$.
%$$ \Tr_{d_0,d_1,\ldots,d_n}^{s_1,\ldots,s_n}(U^m):=d(m-1)^N \sum_{H \in \mathcal{V}_{d_0,d_1,\ldots,d_n}^{s_1,\ldots,s_n}(U^m)} C_H N_{U^m}(\underline{H}).$$
Let $H=C_n^m(\omg^0)\cup \cup_{i \in I(s)}\hat{T}_i^m(\omg^i) \in \mathcal{V}_{d_0,d_1,\ldots,d_n}^{s_1,\ldots,s_n}(U^m)$, where $I(s)=\{ i \in [n]: s_i>0\}$.
For each $F \in \mathcal{R}(H)$, $F_0:=F|_{C_n^m(\omg^0)}$ is an Euler rooting of $C_n^m(\omg^0)$, and $d_{v_i}^+(F_0)=(\omg^0(e^m_{i-1})+\omg^0(e^m_i))(m-1)=:t_i(m-1)$ for $i \in [n]$;
$F_i:=F|_{\hat{T}_i^m(\omg^i)}$ is also an Euler rooting of $\hat{T}_i^m(\omg^i)$, and $d_{v_i}^+(F_i)=s_i(m-1)$ for $i \in I(s)$.
We  have $\tau(F)=\tau(F_0) \prod_{i \in I(s)}\tau(F_i)$ and
$N_{U^m}(\underline{H})=\prod_{i \in I(s)}N_{T_i}(\hat{T}_i)$.
Denote
$$\Tr_{d_i;s_i}(T_i^m;[v_i])=d(m-1)^{N_i} \sum_{H \in \mathcal{V}_{d_i;s_i}(T_i^m;[v_i])} C_H N_{T_i^m}(\underline{H}), i \in I(s).$$
By Lemma \ref{TrCycle}, we have
\begin{equation}\label{Trds}
\begin{split}
\Tr_{d_0,d_1,\ldots,d_n}^{s_1,\ldots,s_n}(U^m)&:= d(m-1)^N \sum_{H \in \mathcal{V}_{d_0,d_1,\ldots,d_n}^{s_1,\ldots,s_n}(U^m)} C_H N_{U^m}(\underline{H})\\
&=d(m-1)^N \sum_{H \in \mathcal{V}_{d_0,d_1,\ldots,d_n}^{s_1,\ldots,s_n}(U^m)} \left(\sum_{F \in \mathcal{R}(H)} \frac{\tau(F)}{\prod_{v \in V(F)} d_v^+(F)} \right) N_{U^m}(\underline{H})\\
& = d(m-1)^N \sum_{C_n^m(\omg^0) \in \mathcal{V}_{d_0}(C_n^m;[C_n^m]),\atop
 \hat{T}_i^m(\omg^i) \in \mathcal{V}_{d_i;s_i}(T_i^m;[v_i]), i \in I(s)} \prod_{i \in I(s)} {s_i +t_i \choose s_i}\\
 & \cdot \sum_{F \in \mathcal{R}(\bigcup\limits_{i \in I(s)}\hat{T}_i^m(\omg^i))} \prod_{i \in I(s)}\frac{s_it_i(m-1)}{s_i+t_i} \frac{\tau(F_i)}{\prod_{v \in V(F_i)}
 d_v^+(F_i)} N_{T_i}(\hat{T}_i) \cdot \sum_{F_0 \in \mathcal{R}(C_n^m(\omg_0))}  \frac{\tau(F_0)}{\prod_{v \in V(F_0)} d_v^+(F_0)} \\
&=\frac{d(m-1)^{N-N_0-\sum_{i \in I(s)}(N_i-1)}}{d_0}
\prod_{i \in I(s)} \frac{s_i}{d_i}\sum_{T_i^m(\omg^i) \in \mathcal{V}_{d_i;t_i}(T_i^m;[v_i])}\sum_{F \in \mathcal{R}(\hat{T}_i^m(\omg^i))}
\frac{d_i(m-1)^{N_i} \tau(F_i)}{\prod_{v \in V(F_i)}d_v^+(F_i)}N_{T_i}(\hat{T}_i)\\
&  \cdot  \sum_{C_n^m(\omg_0)\in \mathcal{V}_{d_0}(C_n^m;[C_n^m])}\prod_{i \in I(s)} \frac{t_i}{s_i+t_i}{s_i +t_i \choose t_i} \sum_{F \in \mathcal{R}(C_n^m(\omg^0))}
\frac{d_0(m-1)^{N_0} \tau(F_0)}{\prod_{v \in V(F_0)}d_v^+(F_0)}
\\
&=\frac{d(m-1)^{N-N_0-\sum_{i \in I(s)}(N_i-1)}}{d_0}\prod_{i \in I(s)} \frac{s_i}{d_i} \Tr_{d_i;s_i}(T_i^m;[v_i])  \\
&  \cdot \sum_{\omg^0: \omg^0(C_n^m)=d_0/m}  \prod_{i=1}^n {\omg^0_{i-1}+\omg^0_{i} +s_i-1 \choose s_i} \cdot 2d_0 m^{n(m-2)-1} f_{C_n}(\omg_0).
\end{split}
\end{equation}
%where $\frac{0}{0}:=1$ and $\Tr_{0;0}(T_i^m;[v_i]):=(m-1)^{N_i-1}$.

%By using Eq. (\ref{Tcycle}) and replacing $s_i$ by $\omg^0_{i-1}+\omg^0_{i}$, we have
%\begin{equation}
%\begin{split}
%\Tr_{d_0,d_1,\ldots,d_n}^{t_1,\ldots,t_n}(U^m)&=\frac{d(m-1)^{N-N_0-\sum_{i \in I}(N_i-1)}}{d_0}\prod_{i \in I} \frac{t_i}{d_i} \Tr_{d_i;t_i}(T_i^m;[v_i]) \\
%& \quad \cdot \sum_{C_n^m(\omg_0)} \frac{2d_0 m^{n(m-2)-1}}{{\omg_{i-1}+\omg_{i}}}\prod_{i=1}^n {\omg^0_{i-1}+\omg^0_{i} +t_i-1 \choose t_i}\\
%&\quad \sum_{x=0}^{2\omg_{\min}}
%\prod_{i=1}^n {\omg^0_{i-1}+\omg^0_{i} \choose \omg^0_i-\omg^0_{\min}+x}
%\sum_{l=0}^{n-1} \prod_{i=1}^l (\omg^0_i+\omg^0_{\min}-x) \prod_{i=l+2}^n (\omg^0_i-\omg^0_{\min}+x).
%\end{split}
%\end{equation}
By a similar discussion as in Section \ref{TrTree}, we have a formula for $ \Tr_{d;s}(T^m;[v])$.
 Denote by $T(k;[v])$ the set of isomorphic classes of subtrees of $T$ with root $v$ and $k$ edges, and for each $\hat{T} \in T(k;[v])$ denote $\tilde{c}_{\ell;s}(\hat{T})$ as follows:
$$ \tilde{c}_{\ell;s}(\hat{T})= \sum_{\omg: \omg(\hat{T})=\ell, d_{v}(\hat{T}(\omg))=s } \prod_{e \in E(\hat{T})}\omg(e) \prod_{u \in V(\hat{T})} \frac{(d_u(\hat{T}(\omega))-1)!}{r_u(\hat{T}(\omega))}.$$
We have
\begin{equation}\label{Trdt}
\Tr_{d;s}(T^m;[v]) =\sum_{k=1}^{d/m} d(m-1)^{(m-1)(|E(T)|-k)}m^{k(m-2)} \sum_{\hat{T} \in T(k;[v])} \tilde{c}_{d/m;s}(\hat{T}) N_{T}(\hat{T}).
\end{equation}
So
\begin{equation}\label{TrProd}
\begin{split}
\prod_{i \in I} \Tr_{d_i;s_i}(T_i^m;[v_i]) &= \prod_{i \in I} d_i \sum_{k_i\in [d_i/m], i \in I}
(m-1)^{(m-1)\sum_{i \in I}(|E(T_i)|-k_i)}m^{(m-2)\sum_{i \in I} k_i}\\
& \quad \quad \cdot
\prod_{\hat{T}_i \in T_i(k_i;[v_i]), i \in I} \tilde{c}_{d_i/m;s_i}(\hat{T}_i)N_{T_i}(\hat{T}_i).
\end{split}
\end{equation}

We now give a formula for the traces of the power of unicyclic graphs.

\begin{thm}\label{MainThm}
Let $U$ be a unicyclic graph which is obtained from a cycle $C_n$ by attaching rooted trees $T_1,\ldots,T_n$ with their roots identified with  the vertices $v_1,\ldots,v_n$ of the cycle respectively.
For each $d$ with $m \mid d$,
\begin{equation}\label{Main1}
\begin{split}
\Tr_d(U^m)&=\sum_{k=1}^{d/m} d(m-1)^{(m-1)(|E(U)|-k)-1}m^{k(m-2)} \sum_{\hat{T} \in \mathbf{T}_k} \tilde{c}_{d/m}(\hat{T}) N_{U}(\hat{T})\\
& \quad + 2 m^{n(m-2)-1} \sum_{d_0+\cdots+d_n=d, d_0/m \ge n \atop m \mid d_i, i\in \{0,1,\ldots,n\}} \sum_{0 \le s_i \le d_i, i \in [n]} d(m-1)^{(m-1)(|E(U)|-n-\sum_{i \in I(s)}|E(T_i)|)} \prod_{i \in I(s)} s_i\\
& \quad \cdot \sum_{k_i\in [d_i/m], i \in I(s)}
(m-1)^{(m-1)\sum_{i \in I}(|E(T_i)|-k_i)}m^{(m-2)\sum_{i \in I(s)} k_i}
\prod_{\hat{T}_i \in T_i(k_i;[v_i]), i \in I(s)} \tilde{c}_{d_i/m;s_i}(\hat{T}_i)N_{T_i}(\hat{T}_i)\\
& \quad \cdot \sum_{\omg^0: \omg^0(C_n^m)=d_0/m}  \prod_{i=1}^n {\omg^0_{i-1}+\omg^0_{i} +s_i-1 \choose s_i} f_{C_n}(\omg_0).
\end{split}
\end{equation}
\end{thm}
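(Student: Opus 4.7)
The plan is to exploit the decomposition \eqref{setdec} of $\mathcal{V}_d(U^m)$ into Veblen sub-hypergraphs that miss at least one edge of the cycle and those that contain the entire $C_n^m$, and write $\Tr_d(U^m) = \Tr_d(U^m;[\hat{C}_n^m]) + \Tr_d(U^m;[C_n^m])$. The first summand is already handled: any $H \in \mathcal{V}_d(U^m;[\hat{C}_n^m])$ has a hypertree underlying graph, so Lemma \ref{tree_root} applies and \eqref{TraSum1} yields the first line of \eqref{Main1}. The bulk of the work is therefore to evaluate $\Tr_d(U^m;[C_n^m])$ and show it matches the remaining lines.

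For the cycle part, I would refine the decomposition \eqref{Dec2}: fix the total edge counts $d_0,d_1,\ldots,d_n$ coming from $C_n^m$ and from each pendant power $T_i^m$, and for each $i$ also fix the weighted degree $s_i$ at $v_i$ in the tree part; this separates the combinatorial data on $C_n^m$ from the data on each rooted subtree. For a Veblen hypergraph $H = C_n^m(\omg^0) \cup \bigcup_{i\in I(s)} \hat T_i^m(\omg^i)$ of this type, an Euler rooting $F$ of $H$ restricts to an Euler rooting $F_0$ of $C_n^m(\omg^0)$ and Euler rootings $F_i$ of the tree pieces, and these glue at the cycle vertices $v_i$ via a multinomial factor $\binom{s_i+t_i}{s_i}$ that records the interleaving of root occurrences of $v_i$ among the $s_i$ tree edges and $t_i = \omg^0_{i-1}+\omg^0_i$ cycle edges at $v_i$. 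The Kirchhoff-type count $\tau(F)$ factors as $\tau(F_0)\prod_i\tau(F_i)$ because $R(F)$ splits at each $v_i$, and likewise $\prod_v d_v^+(F)$ factors with only the $v_i$-entries depending simultaneously on both pieces through $(s_i+t_i)(m-1)$. Expressing the full Kirchhoff-weighted sum as in \eqref{Trds} is then a direct regrouping.

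Next I would substitute the explicit evaluations already available: Lemma \ref{TrCycle} gives the cycle factor $2 d_0 m^{n(m-2)-1} f_{C_n}(\omg^0)$, and Eq.\ \eqref{Trdt} gives $\Tr_{d_i;s_i}(T_i^m;[v_i])$ in terms of sub-trees $\hat T_i$ of $T_i$ rooted at $v_i$ and the weighted sums $\tilde c_{d_i/m;s_i}(\hat T_i)$. Multiplying these pieces out using \eqref{TrProd}, paying attention to the exponents $N = N_0 + \sum_i N_i - n$ of $(m-1)$ and $m$ and the factor $s_i/d_i$ that absorbs the $\tau(F_i)$-to-$\Tr_{d_i;s_i}$ conversion, I would verify that every combinatorial weight matches the corresponding factor in \eqref{Main1}. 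Finally, summing over admissible $(d_0,\ldots,d_n)$ with $m\mid d_i$ and $d_0/m \geq n$, and over $0\le s_i \le d_i$ (with $d_i=0 \Leftrightarrow s_i=0$ automatically killing trivial factors), and then summing over the subtree isomorphism classes $\hat T_i \in T_i(k_i;[v_i])$ and cycle weight assignments $\omg^0$ with $\omg^0(C_n^m)=d_0/m$, produces exactly the second block of \eqref{Main1}.

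The hardest bookkeeping step will be the exponent matching in the last paragraph: the factor $d(m-1)^{(m-1)(|E(U)|-n-\sum_{i\in I(s)}|E(T_i)|)}$ in \eqref{Main1} arises as the product of the $(m-1)^{N}$ prefactor with the various cancellations coming from \eqref{Trdt}, from the cycle term's $(m-1)^{N_0}$, and from the multinomial gluing at each cycle vertex, and one must be careful that the trivial trees $T_i$ with $s_i=0$ contribute a factor of $(m-1)^{(m-1)|E(T_i)|}$ through the $|E(U)|=n+\sum_i|E(T_i)|$ identity without doubling the $s_i$-related prefactors. Once that identity is checked, every other step is a rearrangement of the sums \eqref{Trds}, \eqref{TrProd}, and \eqref{Trcyc1} in the indices $(d_0,\ldots,d_n)$, $(s_1,\ldots,s_n)$, $(k_1,\ldots,k_n)$, $\omg^0$, and $(\hat T_i)$, so no new analytic input beyond Lemmas \ref{TrCycle}, \ref{tree_root}, and \ref{core} is required.
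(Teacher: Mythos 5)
Your proposal is correct and follows essentially the same route as the paper: split $\Tr_d(U^m)$ via the decomposition (\ref{setdec}) into the hypertree part given by (\ref{TraSum1}) and the cycle-containing part, refine the latter by (\ref{Dec2}) with the data $(d_0,\ldots,d_n;s_1,\ldots,s_n)$, factor the Euler rootings at the cycle vertices with the binomial gluing factors as in (\ref{Trds}), and substitute Lemma \ref{TrCycle}, Eq.~(\ref{Trdt}) and Eq.~(\ref{TrProd}) before resumming. The exponent bookkeeping you flag as the delicate step is exactly the computation the paper carries out in (\ref{TraSum2}) using $N=N_0+\sum_i N_i-n$, and it goes through as you describe.
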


\begin{proof}
By the decomposition (\ref{Dec2}), Eq. (\ref{Trds}) and Eq. (\ref{TrProd}),  we have
 \begin{equation}\label{TraSum2}
\begin{split}
\Tr_d(U^m;[C_n^m])&:=d(m-1)^{|V(U^m)|} \sum_{H \in \mathcal{V}_d(U^m;[C_n^m])} C_H N_{U^m}(\underline{H})\\
&=\sum_{d_0+\cdots+d_n=d, d_0/m \ge n \atop m \mid d_i, i\in \{0,1,\ldots,n\}} \sum_{0 \le s_i \le d_i, i \in [n]}\Tr_{d_0,d_1,\ldots,d_n}^{s_1,\ldots,s_n}(U^m)\\
&= \sum_{d_0+\cdots+d_n=d, d_0/m \ge n \atop m \mid d_i, i\in \{0,1,\ldots,n\}} \sum_{0 \le s_i \le d_i, i \in [n]} \frac{d(m-1)^{N-N_0-\sum_{i \in I(s)}(N_i-1)}}{d_0} \prod_{i \in I(s)} \frac{s_i}{d_i} \Tr_{d_i;s_i}(T_i^m;[v_i]) \\
& \quad \cdot \sum_{\omg^0: \omg^0(C_n^m)=d_0/m}  \prod_{i=1}^n {\omg^0_{i-1}+\omg^0_{i} +s_i-1 \choose s_i} 2d_0 m^{n(m-2)-1} f_{C_n}(\omg_0)\\
&= 2 m^{n(m-2)-1} \sum_{d_0+\cdots+d_n=d, d_0/m \ge n \atop m \mid d_i, i\in \{0,1,\ldots,n\}} \sum_{0 \le s_i \le d_i, i \in [n]} d(m-1)^{N-N_0-\sum_{i \in I(s)}(N_i-1)} \prod_{i \in I(s)} s_i\\
& \quad \cdot \sum_{k_i\in [d_i/m], i \in I(s)}
(m-1)^{(m-1)\sum_{i \in I}(|E(T_i)|-k_i)}m^{(m-2)\sum_{i \in I(s)} k_i}
\prod_{\hat{T}_i \in T_i(k_i;[v_i]), i \in I(s)} \tilde{c}_{d_i/m;s_i}(\hat{T}_i)N_{T_i}(\hat{T}_i)\\
& \quad \cdot \sum_{\omg^0: \omg^0(C_n^m)=d_0/m}  \prod_{i=1}^n {\omg^0_{i-1}+\omg^0_{i} +s_i-1 \choose s_i} f_{C_n}(\omg_0),
\end{split}
\end{equation}
 where $N=(m-1)|E(U)|$, $N_0=(m-1)n$ and $N_i=(m-1)|E(T_i)|+1$ are respectively the number of vertices of $U^m$, $C_n^m$ and $T_i^m$ for $i \in [n]$.

By the decomposition (\ref{setdec}), we have
$$  \Tr_d(U^m)=\Tr_d(U^m;[\hat{C}_n^m])+\Tr_d(U^m;[C_n^m])$$
and get the desired result by combing Eq. (\ref{TraSum1}) and Eq. (\ref{TraSum2}).
\end{proof}

%\begin{equation}\label{TraUni}
%\begin{split}
%\Tr_d(U^m)&=\Tr_d(U^m;[\hat{C}_n^m])+\Tr_d(U^m;[C_n^m])\\
%&=\Tr_d(U^m;[\hat{C}_n^m])+\sum_{d_0+\cdots+d_n=d} \sum_{s_i \le d_i, i \in [n]}\Tr_{d_0,d_1,\ldots,d_n}^{s_1,\ldots,s_n}(U^m)\\
%&=\sum_{k=1}^{d/m} d (m-1)^{(m-1)(|E(U)|-k)-1}m^{k(m-2)} \sum_{\hat{T} \in \mathbf{T}_k} \tilde{c}_{d/m}(\hat{T}) N_{U}(\hat{T})\\
%& \quad + \sum_{d_0+\cdots+d_n=d} \sum_{s_i \le d_i, i \in [n]} \frac{d(m-1)^{N-N_0-\sum_{i \in I(s)}(N_i-1)}}{d_0} \prod_{i \in I(s)} \frac{s_i}{d_i} \Tr_{d_i;t_i}(T_i^m;[v_i]) \\
%& \quad \cdot \sum_{C_n^m(\omg^0)}  \prod_{i=1}^n {\omg^0_{i-1}+\omg^0_{i} +s_i-1 \choose s_i} 2d_0 m^{n(m-2)-1} f_{C_n}(\omg_0)\\
%&=\sum_{k=1}^{d/m} d(m-1)^{(m-1)(|E(U)|-k)-1}m^{k(m-2)} \sum_{\hat{T} \in \mathbf{T}_k} \tilde{c}_{d/m}(\hat{T}) N_{T}(\hat{T})\\
%& \quad + 2 m^{n(m-2)-1} \sum_{d_0+\cdots+d_n=d} \sum_{s_i \le d_i, i \in [n]} d(m-1)^{N-N_0-\sum_{i \in I(s)}(N_i-1)} \prod_{i \in I(s)} s_i\\
%& \quad \cdot \sum_{k_i\in [d_i/m], i \in I(s)}
%(m-1)^{(m-1)\sum_{i \in I}(|E(T_i)|-k_i)}m^{(m-2)\sum_{i \in I(s)} k_i}
%\prod_{\hat{T}_i \in (\mathbf{T}_i)_{k_i}[v_i], i \in I(s)} \tilde{c}_{d_i/m;s_i}(\hat{T}_i)N_{T_i}(\hat{T}_i)\\
%& \quad \cdot \sum_{C_n^m(\omg^0)}  \prod_{i=1}^n {\omg^0_{i-1}+\omg^0_{i} +s_i-1 \choose s_i} f_{C_n}(\omg_0).
%\end{split}
%\end{equation}
%We note that if $d < nm$, then the second summand does not appear in Eq. (\ref{TraUni}).

\begin{cor}\label{dmnl}
Let $U$ be the unicyclic graph as defined in Theorem \ref{MainThm}. If $d/m=n$, then
\begin{equation}\label{main2}
\begin{split}
\Tr_{d}(U^m)& =\sum_{k=1}^n n(m-1)^{(m-1)(|E(U)|-k)-1}m^{k(m-2)+1} \sum_{\hat{T} \in \mathbf{T}_k} \tilde{c}_{n}(\hat{T}) N_{U}(\hat{T})\\
& \quad + 2n (n+1) (m-1)^{(m-1)(|E(U)|-n)}m^{n(m-2)}
\end{split}
\end{equation}
If $d/m<n$, then
\begin{equation}\label{main3}
\Tr_{d}(U^m) =\sum_{k=1}^{d/m} d(m-1)^{(m-1)(|E(U)|-k)-1}m^{k(m-2)} \sum_{\hat{T} \in \mathbf{T}_k} \tilde{c}_{d/m}(\hat{T}) N_{U}(\hat{T}).
\end{equation}
\end{cor}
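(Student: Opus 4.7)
The plan is to deduce Corollary \ref{dmnl} directly from Theorem \ref{MainThm} by specializing the ranges of the summation indices. The cycle-containing sum in (\ref{Main1}) is indexed by compositions $d_0+d_1+\cdots+d_n=d$ with the constraint $d_0/m\ge n$. Since $d_0\le d$, this constraint is only feasible when $d/m\ge n$. Consequently, whenever $d/m<n$, the entire second summand in (\ref{Main1}) is an empty sum and only the hypertree contribution (\ref{TraSum1}) survives, which is exactly (\ref{main3}).

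For the boundary case $d/m=n$, the inequalities $d_0/m\ge n$ and $d_0\le d=mn$ force $d_0=d$ and consequently $d_i=0$ for every $i\in[n]$. Then the constraint $0\le s_i\le d_i$ forces $s_i=0$ for all $i$, so $I(s)=\emptyset$ and every product indexed by $I(s)$ collapses to the empty product $1$. In particular the entire inner sum over $k_i$ and $\hat{T}_i$ reduces to $1$, and $\prod_{i=1}^{n}\binom{\omega^0_{i-1}+\omega^0_i+s_i-1}{s_i}=1$. Moreover the condition $\omega^0(C_n^m)=d_0/m=n$ forces $\omega^0_i=1$ for every $i\in[n]$, which is precisely the evaluation already used inside the proof of Lemma \ref{TrCycle} to obtain $f_{C_n}(\omega^0)=n+1$.

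Putting these simplifications into the cycle term of (\ref{Main1}) with $d=mn$ produces
\[
2m^{n(m-2)-1}\cdot d(m-1)^{(m-1)(|E(U)|-n)}\cdot (n+1)=2n(n+1)\,m^{n(m-2)}\,(m-1)^{(m-1)(|E(U)|-n)},
\]
which is the second summand in (\ref{main2}). The hypertree part (\ref{TraSum1}) with $d=mn$ immediately becomes the first summand of (\ref{main2}) after absorbing the factor $d=mn$ into $n\cdot m$ and relabelling $d/m$ as $n$ in the argument of $\tilde{c}$.

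There is no genuine obstacle here; the work is purely bookkeeping. The only point that deserves verification is that the ``$I(s)=\emptyset$'' case of (\ref{Trds}) really is consistent with the general formula, i.e., that the products $\prod_{i\in I(s)}s_i/d_i$, $\prod_{i\in I(s)}\Tr_{d_i;s_i}(T_i^m;[v_i])$, and the nested sums over subtrees $\hat{T}_i$ all correctly collapse to $1$; and that the $\omega^0=(1,\dots,1)$ evaluation $f_{C_n}(\omega^0)=n+1$ is taken from the computation already performed in Lemma \ref{TrCycle}. Once these are checked, (\ref{main2}) and (\ref{main3}) follow by substitution.
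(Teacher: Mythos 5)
Your proposal is correct and follows essentially the same route as the paper: both arguments specialize Theorem \ref{MainThm} by observing that the constraint $d_0/m\ge n$ forces the cycle term to vanish when $d/m<n$ and forces $d_0=d$, $d_i=s_i=0$, $I(s)=\emptyset$, $\omega^0\equiv 1$, $f_{C_n}(\omega^0)=n+1$ when $d/m=n$. The bookkeeping you carry out (collapsing the empty products and absorbing $d=mn$ into the constants) matches the paper's computation exactly.
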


\begin{proof}
If $d/m=n$, then $d_0=d=mn$, $d_i=s_i=0$ for $i \in [n]$, and $I(s)=\emptyset$ in Eq. (\ref{Main1}).
Also in this case, $\omg^0_i=1$ for $i \in [n]$, and $f_{C_n}(\omg^0)=n+1$. So we have
\begin{align*}
\Tr_{nm}(U^m)&=\sum_{k=1}^{d/m} d (m-1)^{(m-1)(|E(U)|-k)-1}m^{k(m-2)} \sum_{\hat{T} \in \mathbf{T}_k} \tilde{c}_{d/m}(\hat{T}) N_{U}(\hat{T})\\
& \quad + 2d(m-1)^{(m-1)(|E(U)|-n)}m^{n(m-2)-1}(n+1)\\
& =\sum_{k=1}^n n(m-1)^{(m-1)(|E(U)|-k)-1}m^{k(m-2)+1} \sum_{\hat{T} \in \mathbf{T}_k} \tilde{c}_{n}(\hat{T}) N_{U}(\hat{T})\\
& \quad + 2n (n+1) (m-1)^{(m-1)(|E(U)|-n)}m^{n(m-2)}.
%& =\sum_{k=1}^{n-1} n(m-1)^{(m-1)(|E(U)|-k)-1}m^{k(m-2)+1} \sum_{\hat{T} \in \mathbf{T}_k} \tilde{c}_{n}(\hat{T}) N_{U}(\hat{T})\\
%& \quad + n(m-1)^{(m-1)(|E(U)|-n)-1}m^{n(m-2)}\left( m\sum_{\hat{T} \in \mathbf{T}_n} \tilde{c}_n(\hat{T}) N_{U}(\hat{T}) + 2 (n+1) (m-1)\right).
\end{align*}

If $d/m<n$, then the second summand in (\ref{Main1}) does not appear.
The result follows.
\end{proof}

Recall the \emph{girth} of a graph $G$, denoted by $g(G)$, is the minimum length of the cycles of $G$. If $G$ contains no cycles, then we define $g(G) =+\infty$.
 At the end of this section, we will consider some special traces of a general graph with girth $g$.

\begin{thm}\label{GenMain}
Let $G$ be a graph with $n$ vertices and $p$ edges.
Then for each $m \ge 3$ and each $d$ with $m \mid d$ and $d/m < g(G)$,
\begin{equation}\label{main4}
\Tr_{d}(G^m) =\sum_{k=1}^{d/m} d(m-1)^{n-1-p+(m-1)(p-k)}m^{k(m-2)} \sum_{\hat{T} \in \mathbf{T}_k} \tilde{c}_{d/m}(\hat{T}) N_{G}(\hat{T}).
\end{equation}
\end{thm}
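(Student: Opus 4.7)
The strategy is to reduce the computation to the hypertree formula developed in Section \ref{TrTree}, by showing that the girth hypothesis $d/m<g(G)$ forces every Veblen hypergraph in $\mathcal{V}_d(G^m)$ to have a hypertree as its underlying hypergraph. Once this is established, the computation of $\Tr_d(G^m)$ becomes a bookkeeping exercise on top of the weighted-hypertree machinery already in place.

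For any $H\in\mathcal{V}_d(G^m)$, each edge of $G^m$ contains $m-2\ge 1$ cored vertices (since $m\ge 3$), so by Lemma \ref{core} the multiplicity of every edge of $H$ is a multiple of $m$, giving $|E(\underline{H})|\le d/m<g(G)$. Since $g(G^m)=g(G)$ (cycles of $G^m$ correspond to cycles of $G$ of the same length), the connected sub-hypergraph $\underline{H}$ has too few edges to contain a cycle and must be a sub-hypertree of $G^m$. The power construction $\hat{T}\mapsto\hat{T}^m$ gives an isomorphism-preserving bijection between subtrees of $G$ and sub-hypertrees of $G^m$, so $\underline{H}=\hat{T}^m$ for a unique subtree $\hat{T}$ of $G$, with $N_{G^m}(\underline{H})=N_G(\hat{T})$. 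With $\underline{H}=\hat{T}^m$, Lemma \ref{tree_root} and the derivation of Eqs. (\ref{SpeT})--(\ref{cdom}) apply verbatim, because they depend only on the underlying hypertree and not on the ambient hypergraph; in particular, for $|E(\hat{T})|=k$,
\begin{equation*}
\sum_{\omg:\,\omg(\hat{T})=d/m} C_{\hat{T}^m(\omg)} \;=\; (m-1)^{-((m-1)k+1)}\, m^{(m-2)k}\, \tilde{c}_{d/m}(\hat{T}).
\end{equation*}

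Substituting into Corollary \ref{TrV} with $|V(G^m)|=n+(m-2)p$ and grouping sub-hypertrees by $k=|E(\hat{T})|$ gives
\begin{align*}
\Tr_d(G^m) &= d(m-1)^{n+(m-2)p}\sum_{k=1}^{d/m}\sum_{\hat{T}\in\mathbf{T}_k} N_G(\hat{T})\sum_{\omg:\,\omg(\hat{T})=d/m} C_{\hat{T}^m(\omg)} \\
&= \sum_{k=1}^{d/m} d(m-1)^{n+(m-2)p-(m-1)k-1}\, m^{(m-2)k}\sum_{\hat{T}\in\mathbf{T}_k}\tilde{c}_{d/m}(\hat{T})\,N_G(\hat{T}),
\end{align*}
and rewriting the exponent as $n-1-p+(m-1)(p-k)$ yields the stated formula. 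The only conceptual step is the girth reduction; everything else is a direct transcription of the hypertree derivation, and no new obstacle arises.
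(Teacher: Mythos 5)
Your proposal is correct and follows essentially the same route as the paper's own proof: the girth hypothesis together with Lemma \ref{core} forces every $H\in\mathcal{V}_d(G^m)$ to be a weighted power hypertree $\hat{T}^m(\omega)$ with $\hat{T}\subseteq G$ a tree on at most $d/m$ edges, after which Corollary \ref{TrV} combined with Eqs. (\ref{SpeT}) and (\ref{cdom}) gives the stated formula by the same exponent bookkeeping. No further comparison is needed.
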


\begin{proof}
%Suppose that $G_1$  has $n$ vertices and $M$ edges.
%Then $G_1^m$ has $N=n+M(m-2)$ vertices.
Observe that $G^m$ has $N:=n+p(m-2)$ vertices.
As $m \ge 3$, each edge of $G^m$ contains a cored vertex.
By Lemma \ref{core}, each edge of $H \in \mathcal{V}_d(G^m)$ repeats in a multiple of $m$ times, which implying that $d$ is multiple of $m$ if $\mathcal{V}_d(G^m) \ne \emptyset$.
 Also $H$ is a weighted graph, namely $H=\underline{H}(\omg)$ such that each edge $e$ of $\underline{H}$ repeats $m \omg(e)$ times in $H$, where $\omg: E(\underline{H}) \to \mathbb{Z}^+$.
 So, $\omg(\underline{H}):=\sum_{e \in E(\underline{H})}\omg(e)=d/m$, which implies that $\underline{H}$ has at most $d/m$ edges.

Now assume that $m \mid d$ and $d/m <g(G)$.
Then $\underline{H}$ contains no cycles, and hence $\underline{H}=\hat{T}^m$ for some tree $\hat{T} \subseteq G$.
So $$\mathcal{V}_d(G^m)=\{\hat{T}^m(\omg): \hat{T} \subseteq G, \omg(\hat{T}^m)=d/m\}.$$
By Corollary \ref{TrV}, Eq. (\ref{cdom}) and a similar discussion as in Eq. (\ref{Bu1}) by replacing $\T$ with $G^m$, $\hat{\T}$ with $\hat{T}^m$,
%Eqs. (\ref{SpeT}) and,
 \begin{align*}
 \Tr_d(G^m)& =d(m-1)^N \sum_{H \in \mathcal{V}_d(G^m)} C_H N_{ G^m}(\underline{H})\\
  &=\sum_{\hat{T}^m \in   \mathbf{T}^m_{\le d/m}}
 d(m-1)^{N-|V(\hat{T}^m|)} m^{(m-2)|E(\hat{T}^m)|}c_{d,m}(\hat{T}^m) N_{G^m}(\hat{T}^m)\\
 & =\sum_{k=1}^{d/m} d(m-1)^{n-1-p+(m-1)(p-k)} m^{k(m-2)}  \sum_{\hat{T}\in \mathbf{T}_k} \tilde{c}_{d/m}(\hat{T})N_{G}(\hat{T}).
 \end{align*}
\end{proof}

We note that if $G$ is a tree $T$ or unicyclic graph $U$, then
Eq. (\ref{main4}) becomes Eq. (\ref{Bu2}) or Eq. (\ref{main3}).

\section{Spectral characterization of unicyclic graphs}\label{SCUG}
In this section, we give some high-ordered cospectral invariants of graphs, in particular for unicylic graphs.
As applications, we prove that a class of unicylic graphs which are not DS but DHS.
We give two examples of infinitely many pairs of cospectral unicyclic graphs with different high-ordered spectra.
Denote by $P_n$ a path on $n$ vertices (as a graph).

\subsection{General graphs}
It is known that if $G_1$ is cospectral with $G_2$, then they have the same number of vertices and edges, respectively, namely
$$N_{G_1}(P_1)=N_{G_2}(P_1), N_{G_1}(P_2)=N_{G_2}(P_2).$$
We will prove that $N_G(P_3)$ is a high-ordered cospectral invariant of $G$.

\begin{lem}\label{NGen}
If $G_1$ is high-ordered cospectral with $G_2$, then
\begin{equation}\label{Gcl}\sum_{\hat{T} \in \mathbf{T}_k}\tilde{c}_{\ell}(\hat{T}) N_{G_1}(\hat{T})=\sum_{\hat{T} \in \mathbf{T}_k}\tilde{c}_{\ell}(\hat{T}) N_{G_2}(\hat{T})
\end{equation}
for all $\ell,k$ with $1 \le k \le \ell <\min\{g(G_1),g(G_2)\}$.
\end{lem}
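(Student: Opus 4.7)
The plan is to apply Theorem~\ref{GenMain} with $d=m\ell$ to both $G_1^m$ and $G_2^m$, factor the resulting traces into a common universal prefactor times a polynomial in a single scalar that depends on $m$, and then extract the individual coefficients by a polynomial-interpolation argument as $m$ varies.

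First I would record that two high-ordered cospectral graphs in particular agree on the case $m=2$ (where $G^m=G$), so $G_1$ and $G_2$ share the same number of vertices $n$ and edges $p$. Fix $\ell<\min\{g(G_1),g(G_2)\}$ and any integer $m\ge 3$. Since $\ell<g(G_i)$ and $m\mid m\ell$, Theorem~\ref{GenMain} applies at $d=m\ell$ to both graphs. A one-line rewriting of the prefactor
\[
(m-1)^{n-1-p+(m-1)(p-k)}m^{k(m-2)} = (m-1)^{n-1-p+(m-1)p}\cdot x_m^k, \qquad x_m:=\frac{m^{m-2}}{(m-1)^{m-1}},
\]
turns Theorem~\ref{GenMain} into
\[
\Tr_{m\ell}(G_i^m) = m\ell\cdot(m-1)^{n-1-p+(m-1)p}\sum_{k=1}^{\ell} x_m^{k}\,A_k(G_i),\qquad i=1,2,
\]
where $A_k(G):=\sum_{\hat T\in\mathbf{T}_k}\tilde c_{\ell}(\hat T)N_G(\hat T)$ is the quantity appearing on each side of \eqref{Gcl}.

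Since by hypothesis $\Tr_{m\ell}(G_1^m)=\Tr_{m\ell}(G_2^m)$, and the prefactor $m\ell(m-1)^{n-1-p+(m-1)p}$ is nonzero and depends only on $m,n,p$, I would cancel it to deduce
\[
\sum_{k=1}^{\ell} x_m^{k}\bigl(A_k(G_1)-A_k(G_2)\bigr)=0 \quad\text{for every integer } m\ge 3.
\]
The left-hand side is a single polynomial of degree at most $\ell$ (with coefficients $A_k(G_1)-A_k(G_2)$ independent of $m$) evaluated at the points $x_m$.

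The final step is the small analytic check that $\{x_m\}_{m\ge 3}$ is an infinite set. Writing $x_m=\tfrac{1}{m-1}\bigl(1+\tfrac{1}{m-1}\bigr)^{m-2}$, a routine computation of $\tfrac{d}{dm}\log x_m=\log\bigl(\tfrac{m}{m-1}\bigr)-\tfrac{2}{m}$ shows that $x_m$ is strictly decreasing for $m\ge 3$ and tends to $0$, so takes infinitely many distinct positive values. A polynomial of degree at most $\ell$ with infinitely many roots must vanish identically, giving $A_k(G_1)=A_k(G_2)$ for every $1\le k\le \ell$, which is exactly \eqref{Gcl}. I do not foresee a serious obstacle; the only conceptual ingredient is this Vandermonde-type argument, and the rest is bookkeeping built on Theorem~\ref{GenMain}.
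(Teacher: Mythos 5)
Your proposal is correct and follows essentially the same route as the paper: both apply Theorem~\ref{GenMain} at $d=m\ell$, factor the coefficient as a fixed prefactor times $\bigl(m^{m-2}/(m-1)^{m-1}\bigr)^{k}$, and conclude by a Vandermonde/interpolation argument in that scalar as $m$ varies. Your monotonicity check that the values $x_m$ are pairwise distinct actually supplies a detail the paper leaves implicit when it asserts its Vandermonde matrix is nonsingular.
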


\begin{proof}
As $\Tr_d(G_1^m)=\Tr_d(G_2^m)$, by Theorem \ref{GenMain}, for $m \mid d$ and $d/m < \min\{g(G_1),g(G_2)\}$,
$$\sum_{k=1}^{d/m} (m-1)^{n-1-p+(m-1)(p-k)}m^{k(m-2)} \sum_{\hat{T} \in \mathbf{T}_k} \tilde{c}_{d/m}(\hat{T})( N_{G_1}(\hat{T})-N_{G_2}(\hat{T}))=0,$$
where $n,p$ are respectively the number of vertices and edges of $G_1$ or $G_2$.
Let $d/m=\ell$, $g(m,k)=(m-1)^{n-1-p+(m-1)(p-k)}m^{k(m-2)}$, and $y(k)=\sum_{\hat{T} \in \mathbf{T}_k} \tilde{c}_{\ell}(\hat{T})( N_{U_2}(\hat{T})-N_{U_1}(\hat{T}))$.
Then
we have $$\sum_{k=1}^{\ell} g(m,k)y(k)=0.$$
As $g(m,k)=g(m,k-1)\frac{m^{m-2}}{(m-1)^{m-1}}=:g(m,k-1)\alpha(m)$ for $k >1$, $g(m,k)=g(m,1)\alpha(m)^{k-1}$.
Now taking $m$ as $\ell$ different integers $m_1,\ldots,m_\ell$ that are greater than $2$, we have
\begin{equation}\label{Vand1}
\left[\begin{array}{ccccc}
1 & \alpha(m_1) & \alpha(m_1)^2 & \cdots & \alpha(m_1)^{\ell-1}\\
1 & \alpha(m_2) & \alpha(m_2)^2 & \cdots & \alpha(m_2)^{\ell-1}\\
\vdots & \vdots& \vdots & \cdots & \vdots \\
1 & \alpha(m_\ell) & \alpha(m_2)^2 & \cdots & \alpha(m_\ell)^{\ell-1}
\end{array}\right]
\left[\begin{array}{c}
y(1)\\
y(2)\\
\vdots\\
y(\ell)
\end{array}\right]=0.
\end{equation}
As the matrix in Eq. (\ref{Vand1}) is nonsingular, we have $y(k)=0$ for $k \in [\ell]$.
\end{proof}

\begin{rmk}\label{NUn0}
In Lemma \ref{NGen}, suppose $\mathbf{T}_k$ has exactly $t_k$ trees $\hat{T}_1,\ldots,\hat{T}_{t_k}$.
If we can take $\ell$ as $t_k$ different integers $\ell_1, \ldots, \ell_{t_k}$ such that
$\min\{ \ell_i: i \in [t_k]\} \ge k$ and
$\max\{ \ell_i: i \in [t_k]\} < \min\{g(G_1),g(G_2)\}$,
then by Lemma \ref{NGen}, we have
\begin{equation}\label{Mat0}
\left[\begin{array}{ccccc}
\tilde{c}_{\ell_1}(\hat{T}_1) & \tilde{c}_{\ell_1}(\hat{T}_2) &  \cdots & \tilde{c}_{\ell_1}(\hat{T}_{t_k})\\
\tilde{c}_{\ell_2}(\hat{T}_1) & \tilde{c}_{\ell_2}(\hat{T}_2) &  \cdots & \tilde{c}_{\ell_2}(\hat{T}_{t_k})\\
\vdots & \vdots&  \cdots & \vdots \\
\tilde{c}_{\ell_{t_k}}(\hat{T}_1) & \tilde{c}_{\ell_{t_k}}(\hat{T}_2) &  \cdots & \tilde{c}_{\ell_{t_k}}(\hat{T}_{t_k})\\
\end{array}\right]
\left[\begin{array}{c}
z(1)\\
z(2)\\
\vdots\\
z(t_k)
\end{array}\right]=0,
\end{equation}
where  $z(i)=N_{G_1}(\hat{T}_i)-N_{G_2}(\hat{T}_i)$ for $i \in [t_k]$.
If the matrix in (\ref{Mat0}), denoted by $\tilde{C}$, is nonsingular, then the equation in (\ref{Mat0}) has only zero solution, which implies that
for all  $\hat{T} \in \mathbf{T}_k$,
$$N_{G_1}(\hat{T})=N_{G_2}(\hat{T}).$$

Obviously, $\mathbf{T}_2=\{P_3\}$.
By definition, $\tilde{c}_{2}(P_3)>0$.
As any graph has girth greater than $2$,
if $G_1$ is high-order cospectral with $G_2$, then
\begin{equation}\label{T2} N_{G_1}(P_3)=N_{G_2}(P_3).\end{equation}

Chen, Sun and Bu \cite{CSBu} defined a parameter $c_{2\ell}(\hat{T})$ and a matrix $C=(c_{2\ell_i}(\hat{T}_j))$ of size $t_k$ similar to $\tilde{C}$.
By definition, we find that
\begin{equation}\label{tcc}\tilde{c}_\ell(\hat{T})=\frac{c_{2\ell}(\hat{T})}{2 \ell},
\end{equation}
which implies that
$$ \tilde{C}=\text{diag}\left\{\frac{1}{2 \ell_1},\ldots,\frac{1}{2 \ell_{t_k}}\right\} C.$$
So $\tilde{C}$ and $C$ have the same singularity.

\end{rmk}

\begin{cor}\label{Main2}
If $G_1$ is high-ordered cospectral with $G_2$, then
$$ N_{G_1}(P_3)=N_{G_2}(P_3).$$
\end{cor}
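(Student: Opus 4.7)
The statement is essentially packaged and waiting in Remark \ref{NUn0}. My plan is to invoke Lemma \ref{NGen} in the smallest nontrivial case $k=\ell=2$ and then handle a one-by-one coefficient matrix.

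First, I would observe that up to isomorphism $P_3$ is the unique tree with exactly two edges, so $\mathbf{T}_2=\{P_3\}$ and hence $t_2=1$. Second, since $G_1$ and $G_2$ are simple graphs, each has girth at least $3$ (or $+\infty$), so $\min\{g(G_1),g(G_2)\}\ge 3>2$, which means the hypothesis $1\le k\le \ell<\min\{g(G_1),g(G_2)\}$ of Lemma \ref{NGen} is satisfied when $k=\ell=2$. Applying the lemma yields
\[
\tilde{c}_{2}(P_3)\,N_{G_1}(P_3)=\tilde{c}_{2}(P_3)\,N_{G_2}(P_3).
\]

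Third, I would verify $\tilde{c}_2(P_3)\neq 0$. Write $P_3$ with edges $e_1,e_2$ sharing a central vertex $v$ (and two leaves $u_1,u_2$). The only weight function $\omega:E(P_3)\to\mathbb{Z}^+$ with $\omega(P_3)=2$ is $\omega(e_1)=\omega(e_2)=1$. For this weight, every weighted degree equals either $1$ (at a leaf) or $2$ (at $v$), and each $r$-value equals $1$. Substituting into the definition
\[
\tilde{c}_{2}(P_3)=\sum_{\omega:\omega(P_3)=2}\prod_{e\in E(P_3)}\omega(e)\prod_{u\in V(P_3)}\frac{(d_u(P_3(\omega))-1)!}{r_u(P_3(\omega))}
\]
gives $\tilde{c}_{2}(P_3)=1\cdot 1\cdot 1!\cdot 0!\cdot 0!=1>0$. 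Dividing out this positive scalar in the previous display yields $N_{G_1}(P_3)=N_{G_2}(P_3)$, completing the proof.

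There is essentially no obstacle here: the only mild subtlety is checking that $\tilde{c}_2(P_3)\neq 0$, which a quick direct evaluation settles, and confirming that every simple graph automatically has girth exceeding $2$ so the range restriction in Lemma \ref{NGen} does not bite. Everything else is an immediate specialization of the machinery already set up.
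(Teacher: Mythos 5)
Your proof is correct and follows essentially the same route as the paper: the paper's Remark \ref{NUn0} likewise specializes Lemma \ref{NGen} to $k=\ell=2$, notes $\mathbf{T}_2=\{P_3\}$ and that every graph has girth greater than $2$, and uses $\tilde{c}_2(P_3)>0$ to cancel the scalar. Your explicit evaluation $\tilde{c}_2(P_3)=1$ is a correct (and slightly more detailed) version of the paper's bare assertion that this coefficient is positive.
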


\begin{cor}\label{ID4}
If $G_1$ is high-ordered cospectral with $G_2$, then
$$ N_{G_1}(P_k)=N_{G_2}(P_k), k \in \{1,2,3\},$$
 and
 $$N_{G_1}(C_3)=N_{G_2}(C_3), N_{G_1}(C_4)=N_{G_2}(C_4).$$
\end{cor}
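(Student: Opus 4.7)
The plan is to dispatch the five assertions by combining the $m=2$ case of high-ordered cospectrality, which is precisely ordinary cospectrality, with the already-established Corollary \ref{Main2}. Since $G_1$ and $G_2$ are in particular $2$-ordered cospectral, $\Tr_d(G_1) = \Tr_d(G_2)$ for all $d$. Standard spectral-moment bookkeeping then yields three of the five identities for free: $N_{G_i}(P_1) = |V(G_i)|$ is the order of $\A(G_i)$ (equivalently, the number of eigenvalues counted with multiplicity); $N_{G_i}(P_2) = |E(G_i)| = \tfrac{1}{2}\Tr_2(G_i)$; and $N_{G_i}(C_3) = \tfrac{1}{6}\Tr_3(G_i)$, using that any closed walk of length $3$ in a simple graph is a directed traversal of a triangle.

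The remaining assertion about paths, $N_{G_1}(P_3) = N_{G_2}(P_3)$, is precisely the statement of Corollary \ref{Main2} and has just been proved; this is the only place where the $m \ge 3$ portion of the high-ordered hypothesis is genuinely invoked at this stage.

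For the $4$-cycle count I would invoke the classical closed-walk expansion of the fourth spectral moment,
\[
\Tr_4(G) \;=\; 2|E(G)| \;+\; 4\,N_G(P_3) \;+\; 8\,N_G(C_4),
\]
obtained by classifying each closed walk $v_0 v_1 v_2 v_3 v_0$ according to whether $v_2 = v_0$ or $v_2 \ne v_0$, and within the latter case according to whether $v_1 = v_3$ (retracing an edge) or $v_1 \ne v_3$ (traversing a genuine $4$-cycle), and then using the identity $\sum_v d_v^2 = 2|E(G)| + 2 N_G(P_3)$ to collect terms. Rearranging gives
\[
N_G(C_4) \;=\; \tfrac{1}{8}\bigl(\Tr_4(G) - 2|E(G)| - 4\, N_G(P_3)\bigr),
\]
and since every quantity on the right has just been shown to agree on $G_1$ and $G_2$, so does $N_G(C_4)$.

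I do not foresee any serious technical obstacle here: the argument reduces to Corollary \ref{Main2} together with elementary fourth-moment bookkeeping in the $m=2$ case. The only conceptually delicate point is that $\Tr_4$ alone cannot separate $N(P_3)$ from $N(C_4)$, so one genuinely needs the high-ordered input to extract the $P_3$-count first; once that is in hand, the $C_4$-count drops out of the usual spectrum.
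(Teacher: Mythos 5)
Your proposal is correct and follows essentially the same route as the paper: the $P_1,P_2$ counts come from ordinary cospectrality, $N(C_3)$ from $\Tr_3(G)=6N_G(C_3)$, $N(P_3)$ from Corollary \ref{Main2}, and $N(C_4)$ by solving the classical identity $\Tr_4(G)=2N_G(P_2)+4N_G(P_3)+8N_G(C_4)$, which is exactly the formula the paper cites. The only difference is that you sketch the derivation of the fourth-moment identity rather than citing it, which is harmless.
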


\begin{proof}
It is known for a graph $G$, $\Tr_3(G)=6N_G(C_3)$ and
$\Tr_4(G)=2N_G(P_2)+4N_G(P_3)+8N_G(C_4)$ (\cite{Cve}).
The result follows by Corollary \ref{Main2} and the above equalities.
\end{proof}

\begin{cor}\label{CosG0}
Let $G_1$ and $G_2$ be two graphs with girth greater than or equal to $g$.
If $G_1$ is high-order cospectral with $G_2$, then

\begin{itemize}

 \item[(1)] if $g=5$, then $N_{G_1}(\hat{T})=N_{G_2}(\hat{T})$ for all trees $\hat{T}$ with at most $3$ edges.

  \item[(2)] if $g=7$, then $N_{G_1}(\hat{T})=N_{G_2}(\hat{T})$ for all trees $\hat{T}$ with at most $4$ edges.

  \item[(3)] if $g=11$, then $N_{G_1}(\hat{T})=N_{G_2}(\hat{T})$ for all trees $\hat{T}$ with at most $5$ edges.
  \end{itemize}
\end{cor}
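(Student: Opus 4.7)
The plan is to apply Lemma \ref{NGen} together with the matrix-inversion scheme already set up in Remark \ref{NUn0}. For a fixed $k$, the lemma gives
\begin{equation*}
\sum_{\hat T\in \mathbf{T}_k} \tilde c_\ell(\hat T)\bigl(N_{G_1}(\hat T)-N_{G_2}(\hat T)\bigr)=0
\end{equation*}
for every admissible $\ell\in[k,\min\{g(G_1),g(G_2)\}-1]$. If we select $t_k:=|\mathbf{T}_k|$ distinct values of $\ell$ from this range and verify that the resulting $t_k\times t_k$ matrix $\tilde C=(\tilde c_{\ell_i}(\hat T_j))$ is nonsingular, then the unique solution to system (\ref{Mat0}) is the zero vector, so $N_{G_1}(\hat T)=N_{G_2}(\hat T)$ for every $\hat T\in\mathbf{T}_k$.

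The specific girth values $5,7,11$ are dictated by the counts $t_1=1,\ t_2=1,\ t_3=2,\ t_4=3,\ t_5=6$ of unlabeled trees with $k$ edges. In case (1) it suffices to take $\ell=1$, $\ell=2$, and $\ell\in\{3,4\}$ for $k=1,2,3$ respectively; in case (2) we additionally use $\ell\in\{4,5,6\}$ for $k=4$; in case (3) we additionally use $\ell\in\{5,6,7,8,9,10\}$ for $k=5$. In every case the hardest $k$ consumes precisely the admissible range, while smaller $k$'s have room to spare.

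What remains is the nonsingularity of each $\tilde C$, a finite computation in which $\tilde c_\ell(\hat T)$ is expanded via (\ref{cdom}) as a sum over edge-weightings $\omg\colon E(\hat T)\to\mathbb{Z}^+$ with $\omg(\hat T)=\ell$. For $k\le 2$ the matrix is a single positive scalar and nothing is to check; for $k=3$ the $2\times 2$ determinant involving $P_4$ and $K_{1,3}$ at $\ell\in\{3,4\}$ is easily evaluated and seen to be nonzero because the two trees produce different coefficient growth rates in $\ell$. For $k=4$ the $3\times 3$ block over the trees $P_5$, $K_{1,4}$, and the unique remaining tree with $4$ edges is slightly larger but still routine.

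The main obstacle is the $6\times 6$ verification for $k=5$ in case (3): each of the six trees on six vertices contributes a column, and each $\ell\in\{5,\ldots,10\}$ a row, with entries sums over five-part compositions of $\ell$ carrying degree-dependent factorial weights. There is no structural reason for accidental vanishing, but certifying nonsingularity of this concrete $6\times 6$ integer matrix requires either a symbolic determinant computation or a machine-assisted check. The identity $\tilde c_\ell=c_{2\ell}/(2\ell)$ from (\ref{tcc}) lets one cross-check against the analogous matrix already handled by Chen, Sun and Bu \cite{CSBu}, which should localize any arithmetic error.
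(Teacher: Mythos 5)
Your proposal matches the paper's own proof essentially verbatim: both invoke Lemma \ref{NGen} through the scheme of Remark \ref{NUn0}, choose the same $\ell$-ranges ($\{3,4\}$, $\{4,5,6\}$, $\{5,\ldots,10\}$ for $k=3,4,5$), and reduce the remaining work to checking nonsingularity of the matrices $\tilde{C}$, which the paper likewise delegates to the tabulated values of $c_{2\ell}(\hat{T})$ in \cite{CSBu} via the relation (\ref{tcc}). The proposal is correct and takes essentially the same approach.
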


\begin{proof}

By Tables 1 and 2 in \cite{CSBu} and Remark \ref{NUn0} , $\mathbf{T}_3$ has $2$ trees, and the matrix in (\ref{Mat0}) is nonsingular by taking $\ell \in \{3,4\}$.
Similarly, $\mathbf{T}_4$ has $3$ trees, and the matrix in (\ref{Mat0}) is nonsingular by taking $\ell \in \{4,5,6\}$;
$\mathbf{T}_5$ has $6$ trees, and the matrix in (\ref{Mat0}) is nonsingular by taking $\ell \in \{5,6,\ldots,10\}$.
The result follows by Remark \ref{NUn0}.
\end{proof}

\subsection{Unicylic graphs}
We first prove that the girth is a high-ordered cospectral invariant of unicyclic graphs.

\begin{lem}\label{girth}
Let $U_1$ (respectively, $U_2$) be a unicyclic graph containing a cycle $C_{n_1}$ (respectively, $C_{n_2}$).
If $U_1$ is high-order cospectral with $U_2$, then $n_1=n_2$.
\end{lem}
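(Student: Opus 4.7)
My plan is to argue by contradiction. Without loss of generality assume $n_1 \leq n_2$, and suppose for contradiction that $n_1 < n_2$. Since $U_1$ and $U_2$ are ordinary cospectral (the $m=2$ case of high-ordered cospectrality), they share the same edge count $p := |E(U_1)| = |E(U_2)|$. The crux is to choose the single magic value $d := m n_1$: this is precisely the smallest length at which the cycle of $U_1$ begins to contribute to $\Tr_d(U_1^m)$, whereas $d/m = n_1$ still lies strictly below $g(U_2) = n_2$, so $U_2$'s cycle remains invisible to $\Tr_d(U_2^m)$.

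With this choice of $d$, Corollary \ref{dmnl} (Eq.~(\ref{main2})) expresses $\Tr_{mn_1}(U_1^m)$ as a tree sum indexed by $k = 1,\dots,n_1$ plus the cycle correction $2n_1(n_1+1)(m-1)^{(m-1)(p-n_1)}m^{n_1(m-2)}$, while Eq.~(\ref{main3}) of Corollary \ref{dmnl} (equivalently, Theorem \ref{GenMain}) expresses $\Tr_{mn_1}(U_2^m)$ as a pure tree sum of the same shape. High-ordered cospectrality forces these two expressions to agree for every $m \geq 3$. Subtracting them, dividing by the common factor $(mn_1)(m-1)^{(m-1)(p-n_1)-1}m^{n_1(m-2)}$, and recalling the multiplicative ratio $\alpha(m) := m^{m-2}/(m-1)^{m-1}$ already featured in the proof of Lemma \ref{NGen}, the identity collapses to
\begin{equation}\label{planid}
\sum_{k=1}^{n_1}\alpha(m)^{k-n_1}\,X_k \;+\; \frac{2(n_1+1)(m-1)}{m} \;=\; 0 \qquad \text{for all } m \geq 3,
\end{equation}
where
$$X_k \;:=\; \sum_{\hat{T}\in\mathbf{T}_k}\tilde{c}_{n_1}(\hat{T})\,\bigl[N_{U_1}(\hat{T})-N_{U_2}(\hat{T})\bigr]$$
is a real constant independent of $m$.

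The decisive step is an asymptotic analysis of \eqref{planid} in the parameter $m$. A brief computation gives $\alpha(m) \sim e/m \to 0^+$ as $m \to \infty$. Multiplying \eqref{planid} by $\alpha(m)^{n_1-1}$ and sending $m \to \infty$ leaves only the constant term, which forces $X_1 = 0$; dividing the resulting identity by $\alpha(m)$ and repeating the limit yields $X_2 = 0$, and a finite induction produces $X_k = 0$ for every $k \in \{1,\dots,n_1-1\}$. Equation \eqref{planid} then collapses to
$$X_{n_1} \;=\; -\frac{2(n_1+1)(m-1)}{m},$$
which cannot hold for all $m \geq 3$ simultaneously, since its left side is a fixed real number while its right side varies nontrivially with $m$ (e.g.\ the values at $m = 3$ and $m = 4$ already disagree). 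This contradiction proves $n_1 = n_2$. The main obstacle is that the individual constants $X_k$ are \emph{not} forced to vanish by Lemma \ref{NGen} alone, because that lemma only controls the parameter $\ell < n_1$; it is the asymptotic structure of $\alpha(m)$, combined with the extra cycle term furnished by Corollary \ref{dmnl}, that supplies precisely the additional information needed to peel off the $X_k$'s one by one and reach the contradiction.
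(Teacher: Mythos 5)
Your proof is correct, and it follows the paper's strategy up to the key identity: both arguments choose $d=mn_1$, apply Corollary \ref{dmnl} in its two regimes to $U_1$ and $U_2$, and reduce to the statement that $\sum_{k=1}^{n_1}\alpha(m)^{k-n_1}X_k+\tfrac{2(n_1+1)(m-1)}{m}=0$ for all $m\ge 3$, where $\alpha(m)=m^{m-2}/(m-1)^{m-1}$. Where you diverge is the endgame. The paper evaluates this identity at $n_1$ distinct values $m_1,\dots,m_{n_1}$, solves the resulting linear system by Cramer's rule, and then shows by a separate and somewhat delicate construction that the $m_i$ can be chosen so that the relevant determinant is nonzero, forcing $y(1)\ne 0$ and contradicting $N_{U_1}(P_2)=N_{U_2}(P_2)$ (equal edge counts). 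You instead exploit $\alpha(m)\sim e/m\to 0$ directly: multiplying by $\alpha(m)^{n_1-j}$ and letting $m\to\infty$ (through integers, which is all that is available and all that is needed) peels off $X_1=\cdots=X_{n_1-1}=0$ one at a time, since at each such step the cycle-correction term carries a factor $\alpha(m)^{n_1-j}$ with $n_1-j\ge 1$ and hence vanishes in the limit; the surviving equation $X_{n_1}=-2(n_1+1)(m-1)/m$ then fails because its right side is not constant in $m$. Your finish is cleaner and more elementary: it dispenses with the Vandermonde/Cramer machinery and the auxiliary nonvanishing-determinant argument (which in the paper itself already rests on the same asymptotics of $\alpha(m)$ as $m\to\infty$), it does not even need the a priori input $N_{U_1}(P_2)=N_{U_2}(P_2)$, and as a by-product it yields the stronger conclusion that all of $X_1,\dots,X_{n_1-1}$ vanish, whereas the paper's argument only extracts information about $y(1)$.
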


\begin{proof}
Since $U_1$ is high-order cospectral with $U_2$, we have
$\Tr_d(U_1^m)=\Tr_d(U_2^m)$ for all $d$ and $m \ge 3$.
Suppose that $n_1 < n_2$.
Now taking $d=n_1m$, by Corollary  \ref{dmnl} we have
\begin{align*}
& \sum_{k=1}^{n_1} n_1(m-1)^{(m-1)(|E(U)|-k)-1}m^{k(m-2)+1} \sum_{\hat{T} \in \mathbf{T}_k} \tilde{c}_{n_1}(\hat{T}) (N_{U_2}(\hat{T})-N_{U_1}(\hat{T}))\\
& = 2n_1 (n_1+1) (m-1)^{(m-1)(|E(U)|-n_1)}m^{n_1(m-2)}.
\end{align*}
By a simple calculation, we have
 $$\sum_{k=1}^{n_1} (m-1)^{(m-1)(n_1-k)-1}m^{(k-n_1)(m-2)+1} \sum_{\hat{T} \in \mathbf{T}_k} \tilde{c}_{n_1}(\hat{T}) (N_{U_2}(\hat{T})-N_{U_1}(\hat{T}))
 = 2 (n_1+1).
$$
Let $h(m,k)= (m-1)^{(m-1)(n_1-k)-1}m^{(k-n_1)(m-2)+1}$, and
$y(k)=\sum_{\hat{T} \in \mathbf{T}_k} \tilde{c}_{n_1}(\hat{T})( N_{U_2}(\hat{T})-N_{U_1}(\hat{T}))$.
Then
$h(m,k)=h(m,1)\alpha(m)^{k-1}$, and
$$ \sum_{k=1}^{n_1} h(m,1) \alpha(m)^{k-1} y(k)=2(n_1+1),$$
where $\alpha(m)=\frac{m^{m-2}}{(m-1)^{m-1}}$.
Taking $m$ as $n_1$ different integers $m_1,\ldots,m_{n_1}$ that are greater than $2$, we have
\begin{equation}\label{Girth}
\left[\begin{array}{cccc}
h(m_1,1) & h(m_1,1)\alpha(m_1) &  \cdots & h(m_1,1)\alpha(m_1)^{n_1-1}\\
h(m_2,1) & h(m_2,1)\alpha(m_2) &  \cdots & h(m_2,1)\alpha(m_2)^{n_1-1}\\
\vdots & \vdots&  \cdots & \vdots \\
h(m_{n_1},1) & h(m_{n_1},1)\alpha(m_{n_1}) &  \cdots & h(m_{n_1},1)\alpha(m_{n_1})^{{n_1}-1}
\end{array}\right]
\left[\begin{array}{c}
y(1)\\
y(2)\\
\vdots\\
y(n_1)
\end{array}\right]=\left[\begin{array}{c}
2(n_1+1)\\
2(n_1+1)\\
\vdots\\
2(n_1+1)
\end{array}\right].
\end{equation}
It is easily found the matrix $H:=(h(m_i,1)\alpha(m_i)^{j-1})$ in Eq. (\ref{Girth}) is nonsingular, and by Cramer's rule
\begin{equation}\label{Root} y(1)=(\det H)^{-1} \det M, \end{equation}
where
$$ M=\left[\begin{array}{cccc}
2(n_1+1) & h(m_1,1)\alpha(m_1) &  \cdots & h(m_1,1)\alpha(m_1)^{n_1-1}\\
2(n_1+1) & h(m_2,1)\alpha(m_2) &  \cdots & h(m_2,1)\alpha(m_2)^{n_1-1}\\
\vdots & \vdots&  \cdots & \vdots \\
2(n_1+1) & h(m_{n_1},1)\alpha(m_{n_1}) &  \cdots & h(m_{n_1},1)\alpha(m_{n_1})^{{n_1}-1}
\end{array}\right],$$
which is obtained from $H$ by replacing the first column by $(2(n_1+1), \ldots, 2(n_1+1))^\top$.

We will prove that there exists distinct integers $m_1,m_2,\ldots,m_{n_1}$ such that $\det M \ne 0$.
First given any $n_1-1$ distinct integers $m_1,m_2,\ldots,m_{n_1-1}$, the matrix $M(n_1,1)$ is nonsingular so that the equation
$$  (2(n_1+1), \ldots, 2(n_1+1))^\top=M(n_1,1) x$$
has a unique solution $x=(x_1,\ldots, x_{n_1-1})^\top$, where $M(n_1,1)$ is obtained from $M$ by deleting the last row and the first column.
Let $\ell$ be the smallest index such that $x_\ell \ne 0$.
Then $\ell \le n_1-2$; otherwise we have
$$  (2(n_1+1), \ldots, 2(n_1+1))^\top=
(h(m_1,1)\alpha(m_1)^{n_1-1},\ldots,h(m_{n_1-1},1)\alpha(m_{n_1-1})^{n_1-1})^\top\cdot x_{n_1-1}, $$
a contradiction.
Now consider the following equation in the variable $m$:
\begin{equation}\label{negequ}
2(n_1+1)=h(m,1)\alpha(m) x_1 + \cdots+h(m,1)\alpha(m)^{n_1-1} x_{n_1-1}.
\end{equation}
Let
$$ f(m)=h(m,1)\alpha(m) x_1 +\cdots+ h(m,1)\alpha(m)^{n_1-1} x_{n_1-1}
=h(m,2)x_1+ \cdots + h(m,n_1) x_{n_1-1}.$$
Note that $h(m,k) \sim e^{-(n_1-k)}(m-1)^{n_1-k}$ when $m \to +\infty$.
As $x_\ell \ne 0$ for some smallest $\ell \le n_1-2$,
$$f(m) \sim x_\ell h(m,\ell+1) \sim x_\ell e^{-(n_1-\ell- 1)} (m-1)^{n_1-\ell- 1}.$$
When $m \to +\infty$, then $|f(m)| \to +\infty $.
So, there exists a positive integer $m=m_0$ such that Eq. (\ref{negequ}) does not hold.
Surely, $m_0$ is not equal to any of $m_1,\ldots, m_{n_1-1}$ by the definition of $x$.
Taking $m_{n_1}=m_0$, $\det M \ne 0$ as the first column can not be a linear combination of other columns.

By Eq. (\ref{Root}), we get $y(1) \ne 0$.
However, by the definition of $y(k)$, $y(1)=0$ as $N_{U_2}(\hat{T})=N_{U_1}(\hat{T})$ when $\hat{T}=P_2$.
So we get the desired result, namely $n_1=n_2$.
\end{proof}

\begin{lem}\label{CosT}
Let $U_1$ (respectively, $U_2$) be a unicyclic graph containing a cycle $C_{n}$.
If $U_1$ is high-order cospectral with $U_2$, then
$$\sum_{\hat{T} \in \mathbf{T}_k}\tilde{c}_{\ell}(\hat{T}) N_{U_1}(\hat{T})=\sum_{\hat{T} \in \mathbf{T}_k}\tilde{c}_{\ell}(\hat{T}) N_{U_2}(\hat{T})$$
for all $\ell,k$ with $1 \le k \le \ell \le n$.
\end{lem}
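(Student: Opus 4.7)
The plan is to mimic the proof of Lemma \ref{NGen}, but using the trace formulas for powers of unicyclic graphs given in Corollary \ref{dmnl} rather than Theorem \ref{GenMain}. Since $U_1$ is high-ordered cospectral with $U_2$, in particular $U_1$ is cospectral with $U_2$, so $|V(U_1)|=|V(U_2)|$ and $|E(U_1)|=|E(U_2)|=:p$. By Lemma \ref{girth} both graphs contain the same cycle $C_n$ (so the parameter $n$ in Corollary \ref{dmnl} is common). I will split the argument into the two cases $\ell < n$ and $\ell = n$ that correspond exactly to the two formulas (\ref{main3}) and (\ref{main2}).

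For $1 \le \ell < n$, setting $d=m\ell$ and applying Eq.~(\ref{main3}) to both $U_1$ and $U_2$, the identity $\Tr_d(U_1^m)=\Tr_d(U_2^m)$ yields
\[
\sum_{k=1}^{\ell}(m-1)^{(m-1)(p-k)-1}m^{k(m-2)}\,y(k)=0,\qquad
y(k):=\sum_{\hat T\in\mathbf{T}_k}\tilde c_\ell(\hat T)\bigl(N_{U_1}(\hat T)-N_{U_2}(\hat T)\bigr).
\]
Writing the coefficient of $y(k)$ as $g(m,1)\alpha(m)^{k-1}$ with $\alpha(m)=m^{m-2}/(m-1)^{m-1}$, and choosing $\ell$ distinct integers $m_1,\ldots,m_\ell>2$, I obtain a Vandermonde system exactly as in (\ref{Vand1}). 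Its nonsingularity forces $y(k)=0$ for every $k\in[\ell]$, which is the claim for $\ell<n$.

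For $\ell=n$, setting $d=mn$ and applying Eq.~(\ref{main2}), the key observation is that the extra ``cycle term''
\[
2n(n+1)(m-1)^{(m-1)(p-n)}m^{n(m-2)}
\]
depends only on $p$ and $n$, hence is identical for $U_1$ and $U_2$ and cancels when we subtract the two trace identities. What remains is
\[
\sum_{k=1}^{n}(m-1)^{(m-1)(p-k)-1}m^{k(m-2)+1}\,y(k)=0,
\]
with $y(k)$ defined as above using $\tilde c_n$ in place of $\tilde c_\ell$. The same Vandermonde trick, now with $n$ distinct choices of $m>2$, gives $y(k)=0$ for all $k\in[n]$, completing the remaining case.

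The proof is essentially a bookkeeping extension of Lemma \ref{NGen}, so there is no deep obstacle; the only delicate point is the case $\ell=n$, where one must verify that the cycle contribution on the right-hand side of (\ref{main2}) is genuinely graph-independent once one knows the two unicyclic graphs share the same edge count and girth. Both facts are available: edge count from ordinary cospectrality, girth from Lemma \ref{girth}. Once that cancellation is in place, the argument reduces to the Vandermonde-type linear algebra already used in Lemma \ref{NGen}.
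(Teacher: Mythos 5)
Your proposal is correct and follows essentially the same route as the paper: apply Corollary \ref{dmnl} with $d=m\ell$, observe that for $\ell=n$ the cycle term $2n(n+1)(m-1)^{(m-1)(p-n)}m^{n(m-2)}$ is common to both graphs (same edge count from ordinary cospectrality, same girth by hypothesis) and cancels, and then run the Vandermonde argument of Lemma \ref{NGen} on the remaining linear system in the $y(k)$. The paper merely compresses the two cases $\ell<n$ and $\ell=n$ into a single displayed equation, leaving the cancellation of the constant term implicit, whereas you spell it out; there is no substantive difference.
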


\begin{proof}
As $\Tr_d(U_1^m)=\Tr_d(U_2^m)$, by Corollary  \ref{dmnl}, for $m \mid d$ and $d/m \le n$, we have
$$\sum_{k=1}^{d/m} (m-1)^{(m-1)(|E(U)|-k)-1}m^{k(m-2)} \sum_{\hat{T} \in \mathbf{T}_k} \tilde{c}_{d/m}(\hat{T})( N_{U_2}(\hat{T})-N_{U_1}(\hat{T}))=0.$$
Let $d/m=\ell$, $g(m,k)=(m-1)^{(m-1)(|E(U)|-k)-1}m^{k(m-2)}$, and $y(k)=\sum_{\hat{T} \in \mathbf{T}_k} \tilde{c}_{\ell}(\hat{T})( N_{U_2}(\hat{T})-N_{U_1}(\hat{T}))$.
Then
we have $$\sum_{k=1}^{\ell} g(m,k)y(k)=0.$$
By a similar discussion as in Lemma \ref{NGen}, we get the desired result.
%As $g(m,k)=g(m,k-1)\frac{m^{m-2}}{(m-1)^{m-1}}=:g(m,k-1)\alpha(m)$ for $k >1$, $g(m,k)=g(m,1)\alpha(m)^{k-1}$.
%Now taking $m$ as $\ell$ different integers $m_1,\ldots,m_\ell$ that are greater than $2$, we have
%\begin{equation}\label{Vand}
%\left[\begin{array}{ccccc}
%1 & \alpha(m_1) & \alpha(m_1)^2 & \cdots & \alpha(m_1)^{\ell-1}\\
%1 & \alpha(m_2) & \alpha(m_2)^2 & \cdots & \alpha(m_2)^{\ell-1}\\
%\vdots & \vdots& \vdots & \cdots & \vdots \\
%1 & \alpha(m_\ell) & \alpha(m_2)^2 & \cdots & \alpha(m_\ell)^{\ell-1}
%\end{array}\right]
%\left[\begin{array}{c}
%y(1)\\
%y(2)\\
%\vdots\\
%y(\ell)
%\end{array}\right]=0.
%\end{equation}
%As the matrix in Eq. (\ref{Vand}) is nonsingular, we have $y(k)=0$ for $k \in [\ell]$.
\end{proof}

\begin{rmk}\label{NUn}
In Lemma \ref{CosT}, suppose $\mathbf{T}_k$ has exactly $t_k$ trees $\hat{T}_1,\ldots,\hat{T}_{t_k}$.
If we can take $\ell$ as $t_k$ different integers $\ell_1, \ldots, \ell_{t_k}$ such that
$\min\{ \ell_i: i \in [t_k]\} \ge k$ and
$\max\{ \ell_i: i \in [t_k]\} \le n$,
then by Lemma \ref{CosT}, we have the matrix equation in Eq. (\ref{Mat0}):
$\tilde{C}z=0$.
%\begin{equation}\label{Mat}
%\left[\begin{array}{ccccc}
%\tilde{c}_{\ell_1}(\hat{T}_1) & \tilde{c}_{\ell_1}(\hat{T}_2) &  \cdots & \tilde{c}_{\ell_1}(\hat{T}_{t_k})\\
%\tilde{c}_{\ell_2}(\hat{T}_1) & \tilde{c}_{\ell_2}(\hat{T}_2) &  \cdots & \tilde{c}_{\ell_2}(\hat{T}_{t_k})\\
%\vdots & \vdots&  \cdots & \vdots \\
%\tilde{c}_{\ell_{t_k}}(\hat{T}_1) & \tilde{c}_{\ell_{t_k}}(\hat{T}_2) &  \cdots & \tilde{c}_{\ell_{t_k}}(\hat{T}_{t_k})\\
%\end{array}\right]
%\left[\begin{array}{c}
%z(1)\\
%z(2)\\
%\vdots\\
%z(t_k)
%\end{array}\right]=0,
%\end{equation}
%where  $z(i)=N_{U_1}(\hat{T}_i)-N_{U_2}(\hat{T}_i)$ for $i \in [t_k]$.
If the matrix $\tilde{C}$ is nonsingular, then $z$ has only zero solution, which implies that
for all  $\hat{T} \in \mathbf{T}_k$,
$$N_{U_1}(\hat{T})=N_{U_2}(\hat{T}).$$
%
%
%Obviously, $\mathbf{T}_1=\{P_2\}$, $\mathbf{T}_2=\{P_3\}$, where $P_n$ denotes a path on $n$ vertices.
%By definition, $\tilde{c}_{1}(P_2)>0$ and $\tilde{c}_{2}(P_3)>0$.
%As any unicyclic graph has girth at least $3$,
%if $U_1$ is high-order cospectral with $U_2$, then
%\begin{equation}\label{T12} N_{U_1}(P_2)=N_{U_2}(P_2), N_{U_1}(P_3)=N_{U_2}(P_3).\end{equation}
\end{rmk}

\begin{cor}\label{CosG}
Let $U_1$ and $U_2$ be unicyclic graphs both with girth $g$.
If $U_1$ is high-order cospectral with $U_2$, then

\begin{itemize}

 \item[(1)] if $g \ge 4$, then $N_{U_1}(\hat{T})=N_{U_2}(\hat{T})$ for all trees $\hat{T}$ with at most $3$ edges.

  \item[(2)] if $g \ge 6$, then $N_{U_1}(\hat{T})=N_{U_2}(\hat{T})$ for all trees $\hat{T}$ with at most $4$ edges.

  \item[(3)] if $g \ge 10$, then $N_{U_1}(\hat{T})=N_{U_2}(\hat{T})$ for all trees $\hat{T}$ with at most $5$ edges.
  \end{itemize}
\end{cor}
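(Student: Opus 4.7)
The plan is to invoke Lemma \ref{CosT} in the matrix form given by Remark \ref{NUn}. Write $t_k = |\mathbf{T}_k|$ for the number of isomorphism classes of trees with $k$ edges, and label its elements $\hat{T}_1, \ldots, \hat{T}_{t_k}$. Remark \ref{NUn} tells us that if we can find $t_k$ distinct integers $\ell_1, \ldots, \ell_{t_k}$ all lying in $[k, g]$ such that the matrix $\tilde{C} = (\tilde{c}_{\ell_i}(\hat{T}_j))$ is nonsingular, then $N_{U_1}(\hat{T}) = N_{U_2}(\hat{T})$ for every $\hat{T} \in \mathbf{T}_k$. The cases $k = 1, 2$ are immediate since $t_k = 1$ and $\tilde{c}_\ell(\hat{T}) > 0$ by definition. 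Thus it remains to handle $k = 3, 4, 5$, showing that a suitable choice of $\ell_i$ exists and that the corresponding matrix is nonsingular.

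For the ranges, I would read off $t_3 = 2$, $t_4 = 3$, $t_5 = 6$ from Tables 1--2 of \cite{CSBu}, and then pick the smallest admissible $\ell_i$: for $k = 3$ take $\ell \in \{3, 4\}$ (needing $g \ge 4$), for $k = 4$ take $\ell \in \{4, 5, 6\}$ (needing $g \ge 6$), and for $k = 5$ take $\ell \in \{5, 6, 7, 8, 9, 10\}$ (needing $g \ge 10$). These precisely match the girth bounds in the statement. Each choice is legal because in the unicyclic setting Remark \ref{NUn} permits $\ell_i = g$, unlike the general-graph setting of Corollary \ref{CosG0} where the analogous inequality is strict; this accounts for the weaker girth hypotheses here compared to Corollary \ref{CosG0}.

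The only genuine content lies in checking nonsingularity of the matrices $\tilde{C}$. Here I would use Eq. (\ref{tcc}), which expresses $\tilde{C}$ as a positive diagonal rescaling of the matrix $C = (c_{2\ell_i}(\hat{T}_j))$ of Chen, Sun, and Bu. Consequently $\tilde{C}$ and $C$ have the same rank, and the nonsingularity assertions reduce to those already verified in \cite{CSBu} for exactly the same ranges of $\ell$ used above (indeed, this same reduction was invoked in the proof of Corollary \ref{CosG0}). I expect this to be the only substantive step, but it is essentially a citation; the remainder of the argument is a direct application of Remark \ref{NUn} together with the routine counting of $t_k$ for $k \le 5$.
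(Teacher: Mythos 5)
Your proposal is correct and follows essentially the same route as the paper: the paper proves this corollary by pointing to the proof of Corollary \ref{CosG0} (choosing $\ell\in\{3,4\}$, $\{4,5,6\}$, $\{5,\ldots,10\}$ for $k=3,4,5$ and citing the nonsingularity of the matrices from \cite{CSBu} via the relation (\ref{tcc})), with exactly the observation you make that in the unicyclic case Lemma \ref{CosT} allows $\ell$ to equal the girth, which relaxes the girth thresholds by one.
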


\begin{proof}
The proof is similar to that of Corollary \ref{CosG0} by noting that the number $\ell$ in Lemma \ref{CosT} is allowed to equal the girth $g$.
We omit the details.
%By Tables 1 and 2 in \cite{CSBu} and the Remark \ref{tCC} , $\mathbf{T}_3$ has $2$ trees, and the matrix in (\ref{Mat}) is nonsingular by taking $\ell \in \{3,4\}$.
%Similarly, $\mathbf{T}_4$ has $3$ trees, and the matrix in (\ref{Mat}) is nonsingular by taking $\ell \in \{4,5,6\}$;
%$\mathbf{T}_5$ has $6$ trees, and the matrix in (\ref{Mat}) is nonsingular by taking $\ell \in \{5,6,\ldots,10\}$.
%The result follows by Remark \ref{NUn}.
\end{proof}

\begin{figure}[h]
\centering
\includegraphics[scale=.8]{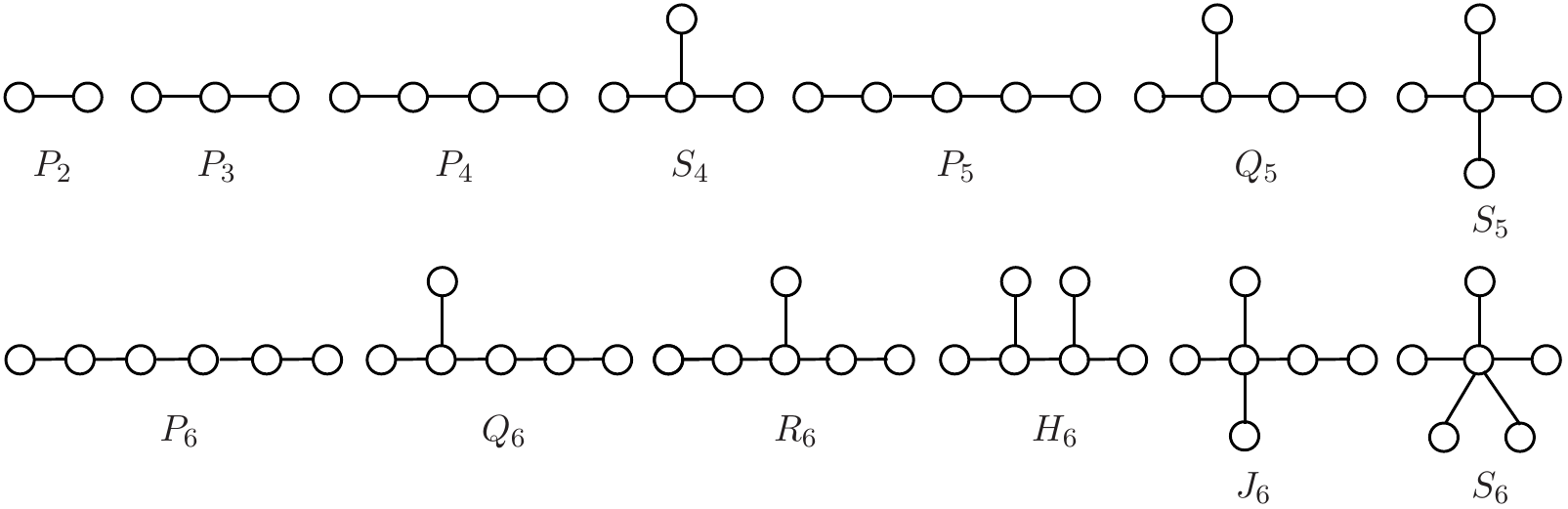}
\caption{The trees with at most $5$ edges}\label{tree5}
\end{figure}

%\begin{cor}
%Let $U$ be a unicyclic graph with $t$ edges, where $t >4$.
%If $U^m$ high-order cospectral with $C_t^m$, then $U=C_t$.
%\end{cor}
%
%\begin{proof}
%The result follows from Corollary \ref{CosG} (1) by considering $\hat{T}=S_4$.
%As $C_t$ contains no $S_4$, so does $U^m$, implying $U=C_t$.
%\end{proof}

We further investigate the high-order cospectral pairs of unicyclic graphs with girth at most $4$.

\begin{cor}\label{g34}
Let $U_1$ and $U_2$ be high-order cospectral unicyclic graphs with girth $g$.

\begin{itemize}

 \item[(1)]  If $g=3$, then
$$N_{U_1}(P_4)+2N_{U_1}(S_4)=N_{U_2}(P_4)+2N_{U_2}(S_4).$$

 \item[(2)]  If $g=4$, then
$$ N_{U_1}(P_5)+2N_{U_1}(Q_5)+6N_{U_1}(S_5)=N_{U_2}(P_5)+2N_{U_2}(Q_5)+6N_{U_2}(S_5).$$
\end{itemize}
\end{cor}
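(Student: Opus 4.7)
The plan is to apply Lemma~\ref{CosT} directly with $k=\ell=g$. Since $U_1$ and $U_2$ are unicyclic with girth $g$, the cycle length $n$ equals $g$ (note that the equality of cycle lengths for high-order cospectral unicyclic graphs was already established in Lemma~\ref{girth}), and the choice $k=\ell=g$ lies at the upper boundary of the admissible range $1\le k\le \ell\le n$ in Lemma~\ref{CosT}. This yields the single linear identity
$$\sum_{\hat T\in\mathbf{T}_g}\tilde c_g(\hat T)\bigl(N_{U_1}(\hat T)-N_{U_2}(\hat T)\bigr)=0.$$

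The main simplification comes from the observation that when $\omg:E(\hat T)\to\mathbb{Z}^+$ satisfies $\omg(\hat T)=g=|E(\hat T)|$ (as $\hat T\in\mathbf{T}_g$), the requirements $\omg(e)\ge 1$ for every edge and $\sum_{e}\omg(e)=g$ force $\omg\equiv 1$. Under this unique weighting, $\prod_{e}\omg(e)=1$, $r_v(\hat T(\omg))=\prod_{e\ni v}\omg(e)!=1$, and $d_v(\hat T(\omg))=d_v(\hat T)$, so the defining sum for $\tilde c_g(\hat T)$ collapses to
$$\tilde c_g(\hat T)=\prod_{v\in V(\hat T)}(d_v(\hat T)-1)!.$$

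All that remains is to enumerate $\mathbf{T}_g$ and tabulate degree sequences (cf. Figure~\ref{tree5}). For $g=3$, the trees with three edges are $P_4$, with degree sequence $(1,2,2,1)$, giving $\tilde c_3(P_4)=0!\,1!\,1!\,0!=1$; and $S_4$, with degree sequence $(3,1,1,1)$, giving $\tilde c_3(S_4)=2!\,0!\,0!\,0!=2$. Substitution yields assertion~(1). For $g=4$, the trees with four edges are $P_5$ with degrees $(1,2,2,2,1)$ and $\tilde c_4(P_5)=1$; $Q_5$ with degrees $(3,2,1,1,1)$ and $\tilde c_4(Q_5)=2!=2$; and $S_5$ with degrees $(4,1,1,1,1)$ and $\tilde c_4(S_5)=3!=6$. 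Substitution yields assertion~(2).

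There is no substantial obstacle here: the whole argument reduces to recognizing that the $\omg\equiv 1$ specialization makes $\tilde c_g(\hat T)$ a product of factorials of reduced degrees, together with the routine enumeration of $\mathbf{T}_3$ and $\mathbf{T}_4$. The only minor point to verify is the completeness of the list $\mathbf{T}_4=\{P_5,Q_5,S_5\}$ (there are indeed exactly three trees on five vertices) and the correct matching of coefficients $1,2,6$ with the corresponding tree shapes.
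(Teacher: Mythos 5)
Your proposal is correct and follows the paper's own route: invoke Lemma \ref{CosT} with $k=\ell=g$ and substitute the coefficient values $\tilde c_3(P_4)=1$, $\tilde c_3(S_4)=2$, $\tilde c_4(P_5)=1$, $\tilde c_4(Q_5)=2$, $\tilde c_4(S_5)=6$, all of which match the paper. The only addition is your explicit justification that the constraint $\omg(\hat T)=g=|E(\hat T)|$ forces $\omg\equiv 1$ and hence $\tilde c_g(\hat T)=\prod_{v}(d_v(\hat T)-1)!$, which the paper leaves as a direct computation.
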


\begin{proof}
By Lemma \ref{CosT}, we have
\begin{equation}\label{Trg34}\sum_{\hat{T} \in \mathbf{T}_g}\tilde{c}_{g}(\hat{T}) N_{U_1}(\hat{T})=\sum_{\hat{T} \in \mathbf{T}_g}\tilde{c}_{g}(\hat{T}) N_{U_2}(\hat{T}).
\end{equation}
If $g=3$, $\mathbf{T}_3=\{P_4,S_4\}$ and $\tilde{c}_{3}(P_4)=1$, $\tilde{c}_{3}(S_4)=2$; and if $g=4$,
%$$\sum_{\hat{T} \in \mathbf{T}_4}\tilde{c}_{4}(\hat{T}) N_{U_1}(\hat{T})=\sum_{\hat{T} \in \mathbf{T}_4}\tilde{c}_{4}(\hat{T}) N_{U_2}(\hat{T}).$$
$\mathbf{T}_4=\{P_5,Q_5,S_5\}$ and
$\tilde{c}_{4}(P_5)=1$, $\tilde{c}_{4}(Q_5)=2$ and $\tilde{c}_{4}(S_5)=6$,
where the graphs $S_4,Q_5,S_5$ are listed in Fig. \ref{tree5}.
The result follows by substituting the above values into Eq. (\ref{Trg34}).
\end{proof}

\subsection{A class of DHS unicylic graphs}
By Lemma \ref{girth}, the girth is a high-ordered cospectral invariant of unicyclic graphs.
However, it does not hold for the usual cospectral invariant of unicyclic graphs.
Let $H(n; q, n_1, n_2)$ denotes the unicyclic graph on $n$ vertices which is obtained from a cycle $C_q$ by attaching two paths $P_{n_1+1}$ and $P_{n_2+1}$ at the same vertex of the cycle $C_q$, and $n_1,n_2 \ge 1$.
Liu et al. \cite{LiuW} proved the following cospectral mates: $H(12;6,1,5)$ and $H(12;8,2,2)$, $H(n;2a+6,a,a+2)$ and $\Lambda(a,a,2a+2)$,
$H(n;2b,b,b)$ and $\Theta(b-2,2b-3,b-1)$,
where $\Lambda(a,a,2a+2)$ is a unicyclic graph with girth $6$ and $a$ being positive even number, and  $\Theta(b-2,2b-3,b-1)$ is a bicyclic graph with $b$ being even number greater than $2$; see Theorem 3.4, Lemma 5.8 and Lemma 5.11 in \cite{LiuW}.
We now further investigate the high-ordered spectral property of $H(n; q, n_1, n_2)$.

\begin{thm}
The unicyclic graph $H(n; q, n_1, n_2)$  is DHS when $q \ge 5$.
\end{thm}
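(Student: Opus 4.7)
The plan is to show that if $G$ is high-ordered cospectral with $H:=H(n;q,n_1,n_2)$ for $q\geq 5$, then $G\cong H$, proceeding in four stages.

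Stage 1 (basic invariants and degree sequence). Taking $m=2$ in the cospectrality gives $|V(G)|=n$ and $|E(G)|=n$. Since $q\geq 5$, $H$ has no triangles or $4$-cycles, so Corollary \ref{ID4} yields $N_G(C_3)=N_G(C_4)=0$; hence $g(G)\geq 5$, so by Corollary \ref{CosG0}(1) (applied with $g=5$) we obtain $N_G(\hat T)=N_H(\hat T)$ for every tree $\hat T$ with at most three edges. In $H$, only the branching vertex $u_0$ has degree greater than $2$ (indeed $d_H(u_0)=4$), so $N_H(P_3)=\binom{4}{2}+(n-3)=n+3$ and $N_H(S_4)=\binom{4}{3}=4$. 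Writing $n_k$ for the number of vertices of $G$ of degree $k$, the four linear equations $\sum_k n_k=n$, $\sum_k k\,n_k=2n$, $\sum_k \binom{k}{2}n_k=n+3$, and $\sum_k \binom{k}{3}n_k=4$ force the degree sequence of $G$ to be $\{4,2^{n-3},1^2\}$.

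Stage 2 (unicyclicity and girth). The unique degree-$4$ vertex and the two leaves of $G$ lie in a common component $G_0$, whose vertex-count equals its edge-count, making $G_0$ connected unicyclic; any further component of $G$ would then be a $2$-regular cycle of length $\geq 5$. I would rule out such extra components by recovering cycle counts inductively from $\Tr_k(G)=\Tr_k(H)$: on a graph whose triangles and shorter cycles have already been matched, the closed walks of length $k$ split into a tree part (whose contribution is determined by the tree-subgraph counts already controlled via Corollary \ref{CosG0} and Lemma \ref{NGen}) and a $C_k$ part, so matching traces forces $N_G(C_k)=N_H(C_k)$ for all $k$. Since $H$ is unicyclic with its only cycle being $C_q$, the same is true of $G$; Lemma \ref{girth} then confirms $g(G)=q$.

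Stage 3 (shape of $G$). Given the degree sequence and girth, the degree-$4$ vertex of $G$ must lie on the unique $q$-cycle, for otherwise starting four edge-disjoint pendant subtrees from it would require at least four leaves, contradicting the two-leaf count. The two pendant edges at this vertex must therefore initiate pendant subtrees that are forced to be simple paths (any internal branching would create another vertex of degree $\geq 3$). Hence $G\cong H(n;q,m_1,m_2)$ for some $m_1,m_2\geq 1$ with $m_1+m_2=n-q$.

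Stage 4 (matching the two pendant-path lengths). The remaining task, and the main obstacle, is to show $\{m_1,m_2\}=\{n_1,n_2\}$, for which only one further discriminating invariant is needed beyond the already-matched sum. For $q\geq 6$ (respectively $q\geq 10$), Corollary \ref{CosG}(2)-(3) yield $N_G(\hat T)=N_H(\hat T)$ for all trees with $\leq 4$ (respectively $\leq 5$) edges, and direct enumeration of these trees inside $H(n;q,n_1,n_2)$ shows that their counts are sensitive to $\min(n_1,n_2)$ (for example, $N_H(P_5)$ jumps according to whether $\min(n_1,n_2)\geq 4$), which pins down $\min(n_1,n_2)$ and hence $\{n_1,n_2\}$. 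The boundary case $q=5$ is the hardest: direct tree invariants are restricted to $\leq 3$ edges by Remark \ref{NUn}, so I would instead substitute Theorem \ref{MainThm} into $\Tr_d(G^m)=\Tr_d(H^m)$ for $d/m=5$, exploiting the cycle-through-tree contribution whose inner factor $\Tr_{d_i;s_i}(T_i^m;[v_i])$ attached to the X-shaped pendant tree at $u_0$ depends on $n_1$ and $n_2$ individually (not merely on their sum), and couple this with the linear combinations supplied by Lemma \ref{CosT} at $k\leq \ell=5$ to extract $\min(n_1,n_2)$.
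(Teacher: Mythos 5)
Your Stage 1 is correct (and in fact tidier than the paper's four-case analysis of $n_3\in\{0,1,2,3\}$), but Stages 2 and 4 contain genuine gaps. The decisive one is Stage 4. Every invariant you invoke there is local: Corollaries \ref{CosG0} and \ref{CosG} and Lemma \ref{CosT} only control counts of subtrees with at most $5$ edges (and for $q=5$ only linear combinations of them with $\ell\le 5$), while Theorem \ref{MainThm} with $d/m$ bounded only sees the structure of the graph within bounded distance of the cycle and of the branch vertex. None of this can separate $H(n;q,n_1,n_2)$ from $H(n;q,n_1',n_2')$ when $n_1+n_2=n_1'+n_2'$ and all four parameters are large (say $\ge 6$): the two graphs then have identical counts of every tree with at most $5$ edges and identical bounded neighbourhoods of the cycle, so the quantities you propose to compare are literally equal. (Your sample claim that $N_H(P_5)$ detects $\min(n_1,n_2)$ only works when that minimum is at most $3$.) The paper closes this step not with high-order invariants at all: once $G$ is shown to have the form $H(n;q',n_1',n_2')$, it quotes Theorem 3.4 of \cite{LiuW}, which says that ordinary cospectrality determines the graph within this family except for the single pair $H(12;6,1,5)$, $H(12;8,2,2)$, and that exceptional pair is then separated by the girth invariant of Lemma \ref{girth}. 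Some such global input is indispensable; your proposal has no substitute for it.

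Stage 2 also has a hole. With degree sequence $\{4,2^{n-3},1^{2}\}$ it is not automatic that the degree-$4$ vertex and the two leaves share a component: a priori the degree-$4$ vertex could sit in a bicyclic figure-eight component, the two leaves in path components, and an extra cycle component could balance $|E(G)|=|V(G)|$. Your proposed repair, recovering $N_G(C_k)$ for all $k$ by splitting closed walks of length $k$ into a tree part and a $C_k$ part, is not sound: closed walks of length $k$ also traverse shorter cycles with trees attached, and the number of $k$-cycles is not a cospectral invariant in general, nor is it established as a high-ordered one in the paper. The paper avoids all of this with an eigenvalue argument: any component besides the one containing the degree-$4$ vertex forces $\lambda_2(G)\ge 2$, whereas interlacing at the degree-$4$ vertex of $H$ (whose deletion leaves a union of paths) gives $\lambda_2(H)<2$. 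You would need either that interlacing argument or a correct component-by-component edge count to finish Stage 2.
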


\begin{proof}
Let $H:=H(n; q, n_1, n_2)$.
Suppose that $G$ is a graph high-ordered cospectral with $H$, which has $n_1$ vertices of degree $1$, $n_2$ vertices of degree $2$ and $n_3$ vertices of degree greater than or equal to $3$.
By the relation $N_H(P_i)=N_G(P_i)$ for $i \in [3]$, and noting that $N_{H}(P_3)=n+3$, we have
\begin{equation}\label{P123}n_1+n_2+n_3=n, n_1+2n_2+\sum_{v: d_v \ge 3}d_v=2n, n_2+\sum_{v: d_v \ge 3}{d_v \choose 2} =n+3.
\end{equation}
By the 3rd and 2nd equalities,
$$ n+3 \ge n_2+\sum_{v: d_v \ge 3}d_v=2n-n_1-n_2,$$
which implies that $n_3 \le 3$.
The result will be arrived by the following discussion.

Case 1: $n_3 =3$. Then for each $v$ with $d_v \ge 3$, $d_v=3$.
So we get $n_1=3,n_2=n-6,n_3=3$.
By Corollary \ref{ID4}, $N_{H}(C_k)=N_G(C_k)$ for $k=3,4$.
As $H$ contains no $C_3$ or $C_4$, the girth of $G$ is at least $5$.
Observe that $N_G(S_4) =3$ and $N_H(S_4)=4$.
However, by Corollary \ref{CosG0}, $N_H(S_4)=N_G(S_4)$; a contradiction.
So this case cannot happen.

Case 2: $n_3=2$. Then $G$ contains exactly two vertices, say $u,v$, both with degree greater than or equal to $3$.
By the 1st and 2nd equalities in Eq. (\ref{P123}), we have
$d_u+d_v=n+2-n_2$.
If both $d_u$ and $d_v$ are  greater than or equal to $4$, then by the 3rd equality in Eq. (\ref{P123}),
$$n+3 \ge n_2+\frac{3}{2}(d_u+d_v) =n_2+\frac{3}{2}(n+2-n_2)=\frac{3}{2}n-\frac{1}{2}n_2+3,$$
which implies that $n \le n_2$; a contradiction.
So there exists a vertex among $u,v$, say $u$ with $d_u=3$.
Also by Eq. (\ref{P123}),
$$ d_v=n-n_2-1, {d_v \choose 2}=n-n_2.$$
If letting $x=n-n_2$, by the above equalities, we have
$$ x^2-5x+2=0,$$
which implies that $x$ is an irrational number; a contradiction.
So this case also cannot happen.

Case 3: $n_3=1$. Then $G$ have only one vertex say $u$ with degree greater than or equal to $3$.
By Eq. (\ref{P123}), we have
$$d_v=n-n_2+1, {d_v \choose 2}=n-n_2+3.$$
By a simple computation, we have $n_1=2,n_2=n-3,n_3=1$ and $d_v=4$.

Let $G_0$ be a component of $G$ containing the vertex $v$.
Then $G_0$ must contain a cycle; otherwise $G_0$ is a tree with at least $4$ vertices of degree $1$; a contradiction to $n_1=2$.
As $G_0$ contains exactly one vertex greater than $2$, namely the vertex $v$ with degree $4$, $G_0$ is of form $H(n; q', n'_1, n'_2)$.

We assert that $G$ has no other components other than $G_0$.
Otherwise, let $G_1$ be another component of $G$.
Then $G_1$ is a cycle with the largest eigenvalue $\la_1(G_1)=2$ as it only contains vertices with degree $2$.
We also note $\la_1(G_0) > 2$ as $G_0$ contains the cycle $C_q$ as a proper subgraph.
So the second eigenvalue $\la_2(G)$ of $G$ holds that
$$\la_2(G) \ge \min\{\la_1(G_0),\la_1(G_1)) \ge 2.$$
Let $\tilde{v}$ be the vertex of $H$ with degree $4$.
Note that $H-\tilde{v}$ consists of paths, and the largest eigenvalue of a path is less than $2$.
By the interlacing theorem,
$$ \la_2(H) \le \la_1(H-\tilde{v}) <2,$$
which yields a contradiction as $\la_2(G)=\la_2(H)$.
So $G$ is of form $H(n; q', n'_1, n'_2)$.
By Theorem 3.4 of \cite{LiuW}, except for $H(12;6,1,5)$ and $H(12;8,2,2)$,
no two non-isomorphic graphs of form $H(n';q',n'_1,n'_2)$ are cospectral.
As two high-ordered cospectral unicyclic graphs has the same girth by Lemma \ref{girth}, $G$ is isomorphic to $H(n;q, n_1,n_2)$.

Case 4: $n_3=0$. Then $G$ is a union of paths and/or cycles.
This is impossible as $\la_1(G) \le 2 < \la_1(H)$.

\end{proof}

\subsection{Cospectral unicyclic graph mates with different high-ordered spectra}

There are two methods to construct cospectral graphs.
The first is to use coalescence introduced by Schwenk \cite{Sch} (or see \cite{DamH1}).
Let $G$ be a graph with root $u$ and $H$ be a graph with root $v$.
The \emph{coalescence} of $G$ and $H$ is obtained by identifying the root $u$ of $G$ with the root $v$ of $H$, denoted by $G(u) \odot H(v)$.

\begin{lem}\label{coa}\cite{Sch, DamH1}
Let $G$ be a graph with root $u$ and $H$ be a graph with root $v$.
If $G$ is cospectral with $H$ and $G-u$ is cospectral with $H-v$,
then for any graph $\Gamma$ with root $w$,
$G(u) \odot \Gamma(w)$ is cospectral with $H(v) \odot \Gamma(w)$.
\end{lem}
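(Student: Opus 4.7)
The plan is to reduce the lemma to a direct substitution into Schwenk's classical formula for the characteristic polynomial of a coalescence. Recall that for any two rooted graphs $(G,u)$ and $(\Gamma,w)$, Schwenk's identity expresses the characteristic polynomial of the coalescence as
\begin{equation*}
\phi(G(u)\odot\Gamma(w),x)=\phi(G,x)\,\phi(\Gamma-w,x)+\phi(G-u,x)\,\phi(\Gamma,x)-x\,\phi(G-u,x)\,\phi(\Gamma-w,x).
\end{equation*}
This is a standard identity that follows either from expanding $\det(xI-\mathcal{A})$ along the row/column indexed by the amalgamated vertex, or equivalently from the walk-generating-function description $\sum_{k\ge 0} W_G(u,u;k)x^{-k-1}=\phi(G-u,x)/\phi(G,x)$ combined with the observation that walks in the coalescence decompose into alternating excursions into the two sides that return to the identified vertex.

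First I would state (or briefly derive) the Schwenk formula above, citing Schwenk \cite{Sch} and Section 2 of \cite{DamH1} for a modern exposition. Next I would substitute the hypotheses: by assumption, $\phi(G,x)=\phi(H,x)$ and $\phi(G-u,x)=\phi(H-v,x)$. Plugging these two equalities into Schwenk's formula for $H(v)\odot\Gamma(w)$ and comparing term-by-term with the formula for $G(u)\odot\Gamma(w)$ immediately yields
\begin{equation*}
\phi(G(u)\odot\Gamma(w),x)=\phi(H(v)\odot\Gamma(w),x),
\end{equation*}
which is precisely the claim that the two coalescences are cospectral.

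There is no real obstacle here once Schwenk's identity is in hand; the lemma is a one-line consequence. The only step that requires care is justifying the identity itself. If the authors prefer a self-contained presentation, the cleanest derivation is a cofactor expansion of the adjacency-matrix determinant at the amalgamation vertex, writing the block form where the shared vertex contributes a single row/column coupling the two sides, and then collecting the resulting three determinants to produce exactly the three terms of Schwenk's formula. Since this identity is already standard and is explicitly cited via \cite{Sch,DamH1}, the proof can simply invoke it and conclude.
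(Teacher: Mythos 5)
Your proof is correct: the paper states this lemma without proof, citing Schwenk and van Dam--Haemers, and your derivation via Schwenk's coalescence formula $\phi(G(u)\odot\Gamma(w),x)=\phi(G,x)\,\phi(\Gamma-w,x)+\phi(G-u,x)\,\phi(\Gamma,x)-x\,\phi(G-u,x)\,\phi(\Gamma-w,x)$ followed by term-by-term substitution of the two cospectrality hypotheses is exactly the standard argument in those references. No gaps; the formula itself checks out (e.g.\ two copies of $K_2$ coalesced give $\phi(P_3,x)=x^3-2x$).
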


The second method is to use rooted product introduced by Godsil and McKay \cite{GodM2} (or see \cite{Lep}).
Let $G$ be a graph and let $\Gamma$ be a rooted graph.
The \emph{root product} $G(\Gamma)$ is the graph obtained by identifying each vertex of $G$ with the root of a copy of $\Gamma$.

\begin{lem}[\cite{Lep}]\label{root}
If $G$ is cospectral with a  graph $H$, then for any rooted graph $\Gamma$,
$G(\Gamma)$ is cospectral with $H(\Gamma)$.
\end{lem}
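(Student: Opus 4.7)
The plan is to derive an explicit formula for the characteristic polynomial $\phi(G(\Gamma),x)$ that depends on $G$ only through its spectrum; the lemma then follows immediately. First, I would label the vertices of $\Gamma$ so that its root $r$ is indexed as vertex $1$, set $B:=A(\Gamma)$, and let $e_1$ denote the first standard basis vector of $\mathbb{C}^k$, where $k=|V(\Gamma)|$. Writing $n:=|V(G)|$ and ordering the vertices of $G(\Gamma)$ by blocks of $k$ (one block per copy of $\Gamma$, with its root listed first), one checks directly that
\[
A(G(\Gamma)) \;=\; I_n \otimes B \;+\; A(G) \otimes e_1 e_1^\top,
\]
because the only edges of $G(\Gamma)$ beyond those of the $n$ disjoint copies of $\Gamma$ are the edges of $G$, which join the roots of distinct copies.

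Next I would Schur-triangularize $A(G)$ by a unitary $P$, so that $P^{-1}A(G)P=T$ is upper triangular with diagonal entries $\lambda_1,\ldots,\lambda_n$, the eigenvalues of $G$. Conjugating $xI_{nk}-A(G(\Gamma))$ by $P\otimes I_k$ yields $I_n\otimes(xI_k-B)-T\otimes e_1 e_1^\top$, which is block upper triangular with diagonal blocks $xI_k-B-\lambda_i e_1 e_1^\top$, hence
\[
\phi(G(\Gamma),x) \;=\; \prod_{i=1}^{n}\det\bigl(xI_k - B - \lambda_i e_1 e_1^\top\bigr).
\]
By the matrix determinant lemma, each factor equals $\det(xI_k-B)\bigl(1-\lambda_i\, e_1^\top (xI_k-B)^{-1} e_1\bigr)$, and since $e_1^\top(xI_k-B)^{-1} e_1 = \phi(\Gamma-r,x)/\phi(\Gamma,x)$ (the $(1,1)$-entry of the adjugate divided by the determinant), this collapses to
\[
\phi(G(\Gamma),x) \;=\; \prod_{i=1}^n \bigl(\phi(\Gamma,x) - \lambda_i\,\phi(\Gamma-r,x)\bigr).
\]

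The right-hand side depends on $G$ only through the multiset $\{\lambda_1,\ldots,\lambda_n\}$, so cospectrality of $G$ and $H$ forces $\phi(G(\Gamma),x)=\phi(H(\Gamma),x)$, which is precisely the assertion that $G(\Gamma)$ and $H(\Gamma)$ are cospectral. I do not anticipate a real obstacle: the three ingredients (the Kronecker decomposition of $A(G(\Gamma))$, Schur triangularization of $A(G)$, and the rank-one matrix determinant lemma) are standard, and the rational-function identity $e_1^\top(xI_k-B)^{-1}e_1 = \phi(\Gamma-r,x)/\phi(\Gamma,x)$ holds generically in $x$ and hence everywhere after clearing denominators. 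The only bookkeeping point is to be consistent about the vertex ordering of $G(\Gamma)$ so that the Kronecker product identity for its adjacency matrix is literally correct.
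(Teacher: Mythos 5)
Your argument is correct. The paper itself gives no proof of this lemma --- it is quoted from Lepovi\'c's paper \cite{Lep} --- but your derivation is essentially the standard one behind that citation: the Kronecker decomposition $A(G(\Gamma))=I_n\otimes B+A(G)\otimes e_1e_1^\top$, block triangularization via a Schur form of $A(G)$, and the rank-one determinant update yield the classical rooted-product formula $\phi(G(\Gamma),x)=\prod_{i=1}^n\bigl(\phi(\Gamma,x)-\lambda_i\,\phi(\Gamma-r,x)\bigr)$ of Godsil--McKay, from which cospectrality is immediate since the right-hand side depends on $G$ only through its spectrum. All three ingredients are applied correctly, so the proposal is a complete, self-contained proof of the cited lemma.
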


\begin{figure}[h]
\centering
\includegraphics[scale=.8]{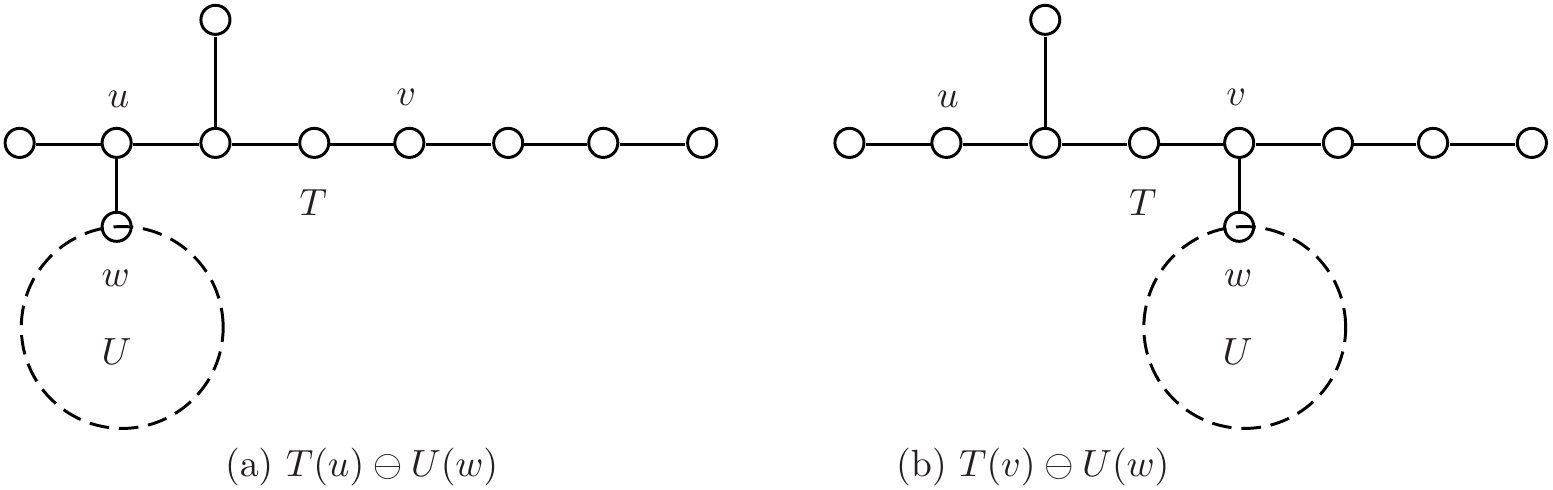}
\caption{The graphs $T(u)\ominus U(w)$ and $T(v)\ominus U(w)$}\label{coal}
\end{figure}

We now construct infinitely many pairs of cospectral unicyclic graphs
with different high-order spectrum.
Let $T$ be the tree in Fig. \ref{coal} (the graph with solid edges) with two specified vertices $u$ and $v$.
It was shown that $T-u$ and $T-v$ have the same spectrum \cite{Sch}.
Let $U$ be any unicyclic graph with root $w$.
Denote by $T(u)\ominus U(w)$ (respectively, $T(v)\ominus U(w)$) the graph obtained by adding an edge between $u$ and $w$ (respectively, between $v$ and $w$); see Fig. \ref{coal}.
By Lemma \ref{coa}, $T(u)\ominus U(w)$ is cospectral with $T(v)\ominus U(w)$.
We now prove that $T(u)\ominus U(w)$ is not high-ordered cospectral with $T(v)\ominus U(w)$.

\begin{cor}\label{ominus}
$T(u)\ominus U(w)$ is not high-ordered cospectral with $T(v)\ominus U(w)$.
\end{cor}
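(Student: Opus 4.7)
The plan is to argue by contradiction, adapting the Chen--Sun--Bu strategy for trees. Write $G_1 := T(u)\ominus U(w)$ and $G_2 := T(v)\ominus U(w)$; the unique cycle of each $G_i$ comes from $U$, so both are unicyclic with common girth $g := g(U)$, and Lemma \ref{coa} already yields that $G_1$ and $G_2$ are cospectral. Suppose for contradiction they are high-ordered cospectral. Then Lemma \ref{CosT}, combined with the Vandermonde-type inversion in Remark \ref{NUn}, forces $N_{G_1}(\hat T) = N_{G_2}(\hat T)$ for every tree $\hat T$ whose edge count lies in the range the inversion handles.

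The key computational step is to sort an embedding of a tree $\hat T$ into $G_i$ by its intersection with the bridge edge. Writing $x_1 = u$, $x_2 = v$, and letting $(A,B)$ be the two rooted components of $\hat T\setminus e$ with roots at the endpoints of $e$, one obtains
\begin{equation*}
N_{G_i}(\hat T) = N_T(\hat T) + N_U(\hat T) + \sum_{e\in E(\hat T)} \bigl[\,N_{T,x_i}(A)\,N_{U,w}(B) + N_{T,x_i}(B)\,N_{U,w}(A)\,\bigr].
\end{equation*}
After subtracting, the first two summands cancel, and $N_{G_1}(\hat T) - N_{G_2}(\hat T)$ becomes a combination of the rooted differences $\Delta(\hat S) := N_{T,u}(\hat S) - N_{T,v}(\hat S)$ weighted by the nonnegative quantities $N_{U,w}(\cdot)$.

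The defining property of the Schwenk tree $T$ in Fig.~\ref{coal} is that $T-u$ and $T-v$ are cospectral while $(T,u)\not\cong (T,v)$ as rooted trees. Since non-isomorphic rooted trees must differ in at least one rooted subtree count, there is a minimal (in edge count) rooted tree $\hat R$ with $\Delta(\hat R)\ne 0$; note $|E(\hat R)|\ge 1$, since $\Delta$ vanishes on the single-vertex rooted tree. I then take $\hat T$ to be $\hat R$ with one pendant edge attached at its root. In the decomposition above, only the choice where $e$ is this pendant edge, with $A=\hat R$ and $B$ the pendant leaf, gives a nonzero contribution $\Delta(\hat R)\cdot N_{U,w}(\{w\}) = \Delta(\hat R)\ne 0$: every other edge decomposition produces two rooted components each with strictly fewer edges than $\hat R$, on which $\Delta$ vanishes by minimality. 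Hence $N_{G_1}(\hat T)\ne N_{G_2}(\hat T)$, contradicting the consequence of Lemma \ref{CosT}.

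The principal obstacle is ensuring $|E(\hat T)| = |E(\hat R)|+1$ lies in the range for which Remark \ref{NUn} inverts to individual subtree counts, namely at most $g$. Since $T$ is the fixed small tree of Fig.~\ref{coal}, $|E(\hat R)|$ is bounded by an absolute constant, so the argument goes through whenever $g$ is sufficiently large. For small girth, one supplements the argument by expanding $\Tr_d(G_i^m)$ via Theorem \ref{MainThm} with $d\ge mg$: this produces further high-ordered cospectral invariants involving products of rooted subtree counts at the cycle vertices (in particular at $w$), against which $\Delta(\hat R)\ne 0$ again forces a contradiction.
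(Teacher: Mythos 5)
Your overall strategy (reduce to subtree-count invariants and exploit that $(T,u)\not\cong(T,v)$ as rooted trees) is reasonable, but two steps do not hold up. First, the ``only one nonzero contribution'' claim is false as stated. With $\hat T=\hat R$ plus a pendant edge at the root $r$, the decomposition at any \emph{other} pendant edge $e'=\{\ell',p(\ell')\}$ of $\hat T$ splits $\hat T$ into the single vertex $\ell'$ and the rooted tree $(\hat T-\ell',\,p(\ell'))$, which has exactly $|E(\hat R)|$ edges, not strictly fewer; minimality of $\hat R$ therefore says nothing about $\Delta(\hat T-\ell')$, and these terms each contribute $\Delta(\hat T-\ell')\cdot N_{U,w}(\mathrm{pt})=\Delta(\hat T-\ell')$, which could cancel $\Delta(\hat R)$. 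So the asserted inequality $N_{G_1}(\hat T)\ne N_{G_2}(\hat T)$ does not follow. (There is also a smaller issue: your sum over $e\in E(\hat T)$ overcounts subgraphs by the size of the $\mathrm{Aut}(\hat T)$-orbit of $e$, and the passage from Lemma \ref{CosT} to individual counts $N_{G_1}(\hat T)=N_{G_2}(\hat T)$ requires the nonsingularity of the matrix $\tilde C$ in Remark \ref{NUn}, which the paper only has for trees with at most $5$ edges.)

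Second, and more seriously, the small-girth case is not actually handled, and for $U$ of girth $3$ or $4$ your Lemma \ref{CosT}/Remark \ref{NUn} machinery only reaches trees with at most $3$ or $4$ edges, which need not include your $\hat T$. The sentence about ``supplementing'' via Theorem \ref{MainThm} is a gesture, not an argument: you would have to show that the cycle-containing Veblen hypergraphs do not spoil the comparison. This is precisely the paper's key structural observation, which your proposal omits: working at $d=5m$, any Veblen hypergraph containing all of $C_n^m$ has at most two edges outside the cycle, hence lies in the subgraph induced by $V(U)$, $u$ (resp.\ $v$) and its neighbours; these induced subgraphs are isomorphic for the two graphs, so $\Tr_{5m}(G_{uw}^m;[C_n^m])=\Tr_{5m}(G_{vw}^m;[C_n^m])$ for \emph{every} girth. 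The paper then never inverts to individual subtree counts at all: it directly evaluates the remaining tree part, finds that only $P_5,Q_5,P_6,Q_6,H_6$ have differing counts, and computes the weighted sum to be $2d\bigl(g(m,4)-g(m,5)\bigr)\ne 0$ for suitable $m$. To repair your proof you would need both to fix the cancellation issue in the minimal-$\hat R$ argument and to supply the cycle-part cancellation explicitly; as written, the proposal does not establish the corollary.
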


\begin{proof}

Let $G_{uw}:=T(u)\ominus U(w)$ and $G_{vw}:=T(v)\ominus U(w)$, and
let $C_n$ be the cycle contained in $U$.
We will compare $\Tr_{d}(G_{uw}^m)$ and $\Tr_{d}(G_{vw}^m)$ with $d=5m$.

Let $H \in \mathcal{V}_d(G_{uw}^m,[C_n^m])$.
By Lemma \ref{core}, $H$ is a weighted hypergraph $\underline{H}(\omg)$ which contains $C_n^m$  such that each edge $e$ of $\underline{H}$ repeats in $m \omg(e)$ times.
So $\omg(\underline{H})=5m/5=5$, which implies that $\underline{H}$ has at most $5$ edges.
As $C_n^m$ has at least $3$ edges, $\underline{H}$ has at most $2$ edges outside $U^m$.
Let $\tilde{G}_{uw}$ be the subgraph of $G_{uw}$ induced by the vertices of $U$, $u$ and its neighbors.
Then $\underline{H}$ is contained in $\tilde{G}_{uw}^m$.
So $\mathcal{V}_d(G_{uw}^m,[C_n^m])=\mathcal{V}_d(\tilde{G}_{uw}^m,[C_n^m])$.
Similarly, if letting $\tilde{G}_{vw}$ be the subgraph of $G_{vw}$ induced by the vertices of $U$, $v$ and its neighbors, then for each $H \in \mathcal{V}_d(G_{vw}^m,[C_n^m])$, $\underline{H}$ is contained in $\tilde{G}_{vw}^m$, and hence $\mathcal{V}_d(G_{vw}^m,[C_n^m])=\mathcal{V}_d(\tilde{G}_{vw}^m,[C_n^m])$.
Note that $\tilde{G}_{uw}$ is isomorphic to $\tilde{G}_{vw}$.
So $\mathcal{V}_d(G_{uw}^m,[C_n^m])$ is equal to $\mathcal{V}_d(G_{vw}^m,[C_n^m])$ under isomorphism.

By definition,
\begin{align*}
\Tr_d(G_{uw}^m;[C_n^m])&=d(m-1)^{|V(G_{uw}^m)|}\sum_{H \in \mathcal{V}_d(G_{uw}^m,[C_n^m])}C_H N_{G_{uw}^m}(\underline{H})\\
&=d(m-1)^{|V(G_{uw}^m)|}\sum_{H \in \mathcal{V}_d(\tilde{G}_{uw}^m,[C_n^m])}C_H N_{\tilde{G}_{uw}^m}(\underline{H})\\
&=d(m-1)^{|V(G_{vw}^m)|}\sum_{H \in \mathcal{V}_d(\tilde{G}_{vw}^m,[C_n^m])}C_H N_{\tilde{G}_{vw}^m}(\underline{H})\\
&=d(m-1)^{|V(G_{vw}^m)|}\sum_{H \in \mathcal{V}_d({G}_{vw}^m,[C_n^m])}C_H N_{{G}_{vw}^m}(\underline{H})\\
&=\Tr_d(G_{vw}^m;[C_n^m]).
\end{align*}
%So we have $\Tr_d(G_{uw}^m;[\hat{C}_n^m])=\Tr_d(G_{vw}^m;[\hat{C}_n^m]).$

By Eq. (\ref{setdec}) and Eq. (\ref{TraSum1}), we have
\begin{align*}
\Tr_d(G_{uw}^m)-\Tr_d(G_{vw}^m)&=\Tr_d(G_{uw}^m;[\hat{C}_n^m])-\Tr_d(G_{vw}^m;[\hat{C}_n^m])\\
&=\sum_{k=1}^{5} d(m-1)^{(m-1)(p-k)-1}m^{k(m-2)} \sum_{\hat{T} \in \mathbf{T}_k} \tilde{c}_{5}(\hat{T}) (N_{G_{uw}}(\hat{T})-N_{G_{vw}}(\hat{T})),
\end{align*}
where $p$ denotes the number of edges of $G_{uw}$ or $G_{vw}$.
We find that for each tree $\hat{T} \in \mathbf{T}_{\le 5} \setminus \{P_5,Q_5,P_6,Q_6,H_6\}$, $N_{G_{uw}}(\hat{T})=N_{G_{vw}}(\hat{T})$,
where $Q_5,Q_6,H_6$ are listed in Fig. \ref{tree5}.
So
\begin{align*}
\Tr_d(G_{uw}^m)-\Tr_d(G_{vw}^m)&=
 dg(m,4)\big[\tilde{c}_{5}(P_5) (N_{G_{uw}}(P_5)-N_{G_{vw}}(P_5))+\tilde{c}_{5}(Q_5) (N_{G_{uw}}(Q_5)-N_{G_{vw}}(Q_5) \big]\\
 &\quad + dg(m,5)\big[\tilde{c}_{5}(P_6) (N_{G_{uw}}(P_6)-N_{G_{vw}}(P_6))+\tilde{c}_{5}(Q_6) (N_{G_{uw}}(Q_6)-N_{G_{vw}}(Q_6))\\
 & \quad \quad + \tilde{c}_{5}(H_6) (N_{G_{uw}}(H_6)-N_{G_{vw}}(H_6))\big],
\end{align*}
where $g(m,k)=(m-1)^{(m-1)(M-k)-1}m^{k(m-2)}$.
By a direct computation or referring Tables 1 and 2 in \cite{CSBu} with the relation (\ref{tcc}), we have
$$ \tilde{c}_{5}(P_5)=6, \tilde{c}_{5}(Q_5)=14, \tilde{c}_{5}(P_6)=1, \tilde{c}_{5}(Q_6)=2, \tilde{c}_{5}(H_6)=4.$$
We also have
$$N_{G_{uw}}(P_5)-N_{G_{vw}}(P_5)=-2, N_{G_{uw}}(Q_5)-N_{G_{vw}}(Q_5)=1,$$
$$
N_{G_{uw}}(P_6)-N_{G_{vw}}(P_6)=-2d_v, N_{G_{uw}}(Q_6)-N_{G_{vw}}(Q_6)=d_v-3,
N_{G_{uw}}(H_6)-N_{G_{vw}}(H_6)=1.$$
So,
$$\Tr_d(G_{uw}^m)-\Tr_d(G_{vw}^m)=2dg(m,4)-2dg(m,5)=2d(m-1)^{(m-1)(M-5)-1}m^{4(m-2)}
((m-1)^{m-1}-m^{m-2}).$$
Clearly, there exists $m=m_0$ (e.g. $m_0=3$) such that $\Tr_d(G_{uw}^m)-\Tr_d(G_{vw}^m)\ne 0$, which implies that
$G_{uw}^{m_0}$ is not cospectral with $G_{vw}^{m_0}$.
The result follows.
\end{proof}

%The coslescence of two connected graph $G,H$ is obtained by identifying one vertex $u$ of $G$ and one vertex $v$ of $H$, denote by $G(u) \cdot H(v)$. There is a tree $T$ in Figure 3.5 that $T-x,T-y$ are cospectral graphs.
%Let $v$ be a point of a unicyclic graph $U$. Let $U_x = U(v)\cdot T(x)(resp. U_y = U(v)\cdot T(y))$ be graphs obtained from $T$ by attaching the vertex $v$ of $U$ at vertices $x(resp. y)$.

Finally we note that the smallest cospectral pair of graphs  are  $G_1:=K_{1,4}$ (also $S_5$ in Fig. \ref{tree5}) and $G_2:=C_4+K_1$.
Consider $\Tr_d(G_1^m)$ and $\Tr_d(G_2^m)$ with $d/m=2$.
By Eq. (\ref{Bu2}),
$$\Tr_d(G_1^m) =\sum_{k=1}^{2} d(m-1)^{(m-1)(4-k)}m^{k(m-2)} \sum_{\hat{T} \in \mathbf{T}_k} \tilde{c}_{2}(\hat{T}) N_{G_1}(\hat{T}).$$
Noting that $V(G_2^m)=V(C_4^m) \cup V(K_1)$, by Eq. (\ref{main3}) we have
$$\Tr_{d}(G_2^m) =\sum_{k=1}^{2} d(m-1)^{(m-1)(4-k)}m^{k(m-2)} \sum_{\hat{T} \in \mathbf{T}_k} \tilde{c}_{2}(\hat{T}) N_{G_2}(\hat{T}).$$
As $N_{G_1}(P_2)=N_{G_2}(P_2)=4$ and $N_{G_1}(P_3)=6 \ne N_{G_2}(P_3)=4$,
we have $\Tr_d(G_1^m) \ne \Tr_{d}(G_2^m)$.
So $K_{1,4}$ and $C_4+K_1$ are not high-order cospectral pair.

We consider the rooted products $G_1(P_n)$ and $G_2(P_n)$, where $P_n$ has one of its pendent vertices as the root and $n \ge 2$.
By Lemma \ref{root}, $G_1(P_n)$ and $G_2(P_n)$ are cospectral.
As $N_{G_1(P_n)}(P_3)-N_{G_2(P_n)}(P_3)=2 \ne 0$,
by a similar discussion we have
$\Tr_d((G_1(P_n))^m) \ne \Tr_{d}((G_2(P_n))^m)$ for $d/m=2$.
So $G_1(P_n)$ and $G_2(P_n)$ are also not high-order cospectral pair.

\begin{cor}\label{fin}
Let $G_1:=K_{1,4}$ and $G_2:=C_4+K_1$, and let $P_n$ be a path on $n$ vertices with one of its pendent vertices as root.
Then $G_1(P_n)$ and $G_2(P_n)$ are  not high-order cospectral pair.
\end{cor}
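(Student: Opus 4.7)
The plan is to invoke Theorem \ref{GenMain} with the specific choice $d/m = 2$. First I would observe that $G_1(P_n) = K_{1,4}(P_n)$ is a tree, while $G_2(P_n) = (C_4 + K_1)(P_n)$ has girth exactly $4$, since the only cycle comes from the $C_4$ of $G_2$ and attaching pendent-rooted paths creates no new cycles. In either case the girth exceeds $d/m = 2$, so the tree-only formula Eq. (\ref{main4}) applies to both graphs.

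Next I would subtract the two expressions for $\Tr_{2m}((G_i(P_n))^m)$. Since $\mathbf{T}_1 = \{P_2\}$ and $\mathbf{T}_2 = \{P_3\}$, and since $G_1(P_n)$ and $G_2(P_n)$ are cospectral by Lemma \ref{root} (so they share the number of edges, i.e.\ the $P_2$-count), the $k=1$ contribution cancels and the difference collapses to
\[
\Tr_{2m}\!\bigl((G_1(P_n))^m\bigr) - \Tr_{2m}\!\bigl((G_2(P_n))^m\bigr) = 2m(m-1)^{(m-1)(p-2)-1} m^{2(m-2)} \tilde{c}_2(P_3) \bigl( N_{G_1(P_n)}(P_3) - N_{G_2(P_n)}(P_3) \bigr),
\]
where $p$ is the common edge count. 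Since $\tilde{c}_2(P_3) > 0$, it suffices to show that the $P_3$-counts differ.

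The third step is a direct degree-counting argument using $N_G(P_3) = \sum_v \binom{d_G(v)}{2}$. In the rooted product $G(P_n)$, each vertex $v$ of $G$ has its degree raised by $1$ (the edge to the root of the attached copy), and each attached copy of $P_n$ contributes $n-2$ internal vertices of degree $2$ together with one additional leaf of degree $1$. A short bookkeeping then yields
\[
N_{G_1(P_n)}(P_3) = \binom{5}{2} + 4\binom{2}{2} + 5(n-2) = 5n+4,
\qquad
N_{G_2(P_n)}(P_3) = 4\binom{3}{2} + \binom{1}{2} + 5(n-2) = 5n+2,
\]
so the difference is $2 \neq 0$. Plugging back, $\Tr_{2m}((G_1(P_n))^m) \neq \Tr_{2m}((G_2(P_n))^m)$ for every $m \ge 3$; in particular the $3$-ordered spectra already disagree, which is more than enough to conclude $G_1(P_n)$ and $G_2(P_n)$ are not high-ordered cospectral.

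There is no real obstacle: the only thing to be careful about is the bookkeeping in the rooted product, namely not double-counting paths that straddle the root–base interface. Everything else is a direct application of Theorem \ref{GenMain} and the fact that the two base graphs $K_{1,4}$ and $C_4+K_1$ agree on $P_2$-counts but disagree on $P_3$-counts, a disagreement that is preserved (up to an additive shift that also cancels) by the rooted product construction.
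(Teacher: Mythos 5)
Your proposal is correct and follows essentially the same route as the paper: apply the tree-sum trace formula with $d/m=2$ (valid since both rooted products have girth exceeding $2$), cancel the $k=1$ term using equal edge counts, and reduce everything to the difference $N_{G_1(P_n)}(P_3)-N_{G_2(P_n)}(P_3)=2$, which your degree-count bookkeeping verifies (and which the paper asserts without computation). The only blemish is the exponent $(m-1)(p-2)-1$ in your coefficient, which should be $n-1-p+(m-1)(p-2)=(m-1)(p-2)$ here since both graphs have one more vertex than edge; this is harmless because the coefficient is the same positive number for both graphs either way.
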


\section{Conclusion}
In this paper, we investigate the spectral characterization of unicyclic graphs by the high-ordered spectra of unicyclic graphs.
It is seen that the high-ordered spectra can recognize more structural information than the usual spectra, which implies that high-ordered spectra may have potential value on the graph isomorphism problem.
We also find there are two questions for further study on high-ordered spectral characterization of graphs.

(1) Recall that two graphs $G_1$ and $G_2$ are high-ordered cospectral if the spectrum of $G_1^m$ is the same as that of $G_2^m$ for all $m \ge 2$.
In fact, from the discussion in Section \ref{SCUG} (e.g. Lemmas \ref{NGen} and \ref{girth}), we only need  finitely many $m$'s such that $G_1$ and $G_2$ are $m$-ordered cospectral.
So, could we give an upper bound for the $m$ on the definition of high-ordered cospectral?
If it does, we will save times on comparing the high-ordered spectra of two graphs.

In addition, it is known that $G_1^m$ is cospectral with $G_2^m$ if and only if $\Tr_d(G_1^m)=\Tr_d(G_2^m)$ for all $d$ or $d=1,2,\ldots, n(m-1)^{n-1}$, where $n$ is the number of vertices of $G_1^m$ or $G_2^m$.
If $m \ge 3$, then $\Tr_d(G^m) \ne 0$ only if $m \mid d$.
As seen in Lemmas \ref{NGen} and \ref{CosT}, Corollaries \ref{CosG0}, \ref{CosG}, \ref{g34}, \ref{ominus} and \ref{fin}, we care more about $d/m$ than $d$.
This also can be found from the definition $\tilde{c}_{d/m}(\hat{T})$ in Eq. (\ref{cdom}).

(2) It is harder to compute the spectra or the characteristic polynomials of uniform hypergraphs than graphs,
as it is closely related the computation of resultants.
Though Cardoso et al. \cite{CardHT} gave a method to get the eigenvalues of the power $G^m$ from the eigenvalues of $G$ (see Lemma \ref{Card}), we still do not know the multiplicities of the eigenvalues of $G^m$.
So, how to compute the spectra of the power hypergraphs is a key question.
Fortunately, we can use traces instead of spectra to recognize high-ordered cospectral graphs, and can compute the traces of hypergraphs with simple structure by Corollary \ref{TrV}.

\end{document}